\documentclass[11pt,reqno]{amsart}
\usepackage{hyperref}
\usepackage{graphics}
\usepackage{tikz}
\usepackage{amssymb}
\usetikzlibrary{patterns}
\usepackage{enumerate}
\newcommand{\di}{\displaystyle}
\newcommand{\tron}[1]{\left( #1 \right)}
\usepackage{color}
\usepackage[left=2.00cm, right=2.00cm, top=2.00cm, bottom=2.00cm]{geometry}
\numberwithin{equation}{section}
\newtheorem{theorem}{Theorem}[section]
\newtheorem{remark}{Remark}[section]

\newtheorem{lemma}[theorem]{Lemma}

\newtheorem{proposition}[theorem]{Proposition}

\allowdisplaybreaks
\newcommand{\e}{\varepsilon}
\newcommand{\R}{\mathbb{R}}
\newcommand{\Z}{\mathbb{Z}}

\newcommand{\ity}{\infty}
\newcommand{\dps}{\displaystyle}

\theoremstyle{plain}

\title{Blow-up and sharp lifespan estimates to the weakly coupled system of structurally damped wave equations with critical nonlinearities}
\begin{document}

\author{Trung Loc Tang}
\address{Trung Loc Tang \hfill\break
School of Mathematics and Computer Sciences, Hanoi National University of Education, 136 Xuan Thuy, Hanoi, Vietnam}
\author{Tuan Anh Dao$^*$}
\address{Tuan Anh Dao\hfill \break 
Faculty of Mathematics and Informatics, Hanoi University of Science and Technology, No.1 Dai Co Viet road, Hanoi, Vietnam}
\author{The Anh Cung}
\address{The Anh Cung\hfill \break
Department of Mathematics, Hanoi National University of Education, 136 Xuan Thuy, Hanoi, Vietnam}
\begin{abstract}
   In this paper, we would like to study the weakly coupled system of semilinear structurally damped wave equations with moduli of continuity in nonlinear terms whose powers belong to the critical curve in the $p-q$ plane. Our main purpose is to find a sharp condition for these moduli of continuity by investigating the global (in time) existence of small data Sobolev solutions and the blow-up result for solutions in finite time as well. Furthermore, when the blow-up phenomenon occurs, we are going to achieve the sharp lifespan estimates for the local (in time) Sobolev solution.
\end{abstract}
\keywords{Structural damping; Weakly coupled system; Modulus of continuity; Blow-up; Lifespan}

\maketitle

\section{Introduction}
\quad The semilinear damped wave equations with different nonlinearities have been widely studied by many authors and various results for the existence or non-existence of global (in time) solutions have been investigated. One of the well-known models is the following
\begin{equation}\label{sys1}
\begin{cases}
u_{tt}(t,x)-\Delta u(t,x) +u_t(t,x)=|u(t,x)|^p,\quad &x\in \mathbb{R}^n, \,t\geq 0,\\
u(0,x)=u_0(x), \quad u_t(0,x)=u_1(x), \quad &x\in \mathbb{R}^n,
\end{cases}
\end{equation}
with a power exponent $p>1$. Todorova-Yordanov in \cite{Todorova2001} proved that the solution exists globally when $p>1+2/n$ and blows up in finite time if $1<p\leq 1+2/n$ for $n\geq 3$. From this observation, we can say that the quantity $p_{\text{Fuj}}(n):=1+2/n$, the so-called Fujita exponent, is understood as the critical exponent of \eqref{sys1}. However, the power nonlinearity $\{|u(t,x)|^p\}_{p>1}$ seems too rough to verify the critical exponent of \eqref{sys1}, so the following Cauchy problem is of interest:
\begin{equation}\label{sys3}
\begin{cases}
u_{tt}(t,x)-\Delta u(t,x) +u_t(t,x)=|u(t,x)|^{1+\frac{2}{n}}\mu(|u(t,x)|),\quad &x\in \mathbb{R}^n,\, t\geq 0, \\
u(0,x)=u_0(x),\quad u_t(0,x)=u_1(x), \quad &x\in \mathbb{R}^n.
\end{cases}
\end{equation}
Here the function $\mu=\mu(|u(t,x)|)$ stands for the modulus of continuity, which is added to increase the regularity of the nonlinearity in a comparison between \eqref{sys3} and the origin equation \eqref{sys1}. The pioneering results are presented in \cite{EbeGirRei2020} by Ebert-Girardi-Reissig. They investigated the sharp condition for the nonlinearity to attain both the global (in time) existence of small data Sobolev solutions and the blow-up solution in finite time. More precisely, by denoting the integral of a modulus of continuity
\[
I_\mu:= \int_{0}^{c}\frac{\mu(s)}{s}\;ds
\]
with a sufficiently small constant $c>0$, the authors proved that if $I_\mu< \ity$, the Sobolev solution globally exists, meanwhile, the solution blows up in finite time when $I_\mu= \ity$ (see also \cite{TangDuong2026} for a further contribution involving a kind of this result). Afterwards, Dao-Reissig in \cite{AnhRei2021} considered the similar problem to \eqref{sys3} but for the weakly coupled system of semilinear damped wave equations, namely
\begin{equation}\label{sys5}
\begin{cases}
u_{tt}(t,x)-\Delta u(t,x) +u_t(t,x)=|v(t,x)|^{p^*}\mu_{1}(|v(t,x)|),\quad &x\in \mathbb{R}^n,\, t\geq 0, \\
v_{tt}(t,x)-\Delta v(t,x) +v_t(t,x)=|u(t,x)|^{q^*}\mu_{2}(|u(t,x)|),\quad &x\in \mathbb{R}^n,\, t\geq 0,\\
u(0,x)=u_0(x), \quad u_t(0,x)=u_1(x), \quad &x\in \mathbb{R}^n,\\
v(0,x)=v_0(x), \quad v_t(0,x)=v_1(x), \quad &x\in \mathbb{R}^n,
\end{cases}
\end{equation}
where $p^*,q^*$ belong to the following critical curve:
\begin{equation}\label{critcurve1}
\Gamma(p,q,n):= \frac{1+\max\{p,q\}}{pq-1}-\frac{n}{2}=0
\end{equation}
and $\mu_1=\mu_1(|v(t,x)|)$, $\mu_2=\mu_2(|u(t,x)|)$ present two suitable moduli of continuity. We call \eqref{critcurve1} the critcal curve because in terms of studying \eqref{sys5} without any moduli of continuity, i.e. $\mu_1\equiv \mu_2\equiv 1$, one realizes that a unique global (in time) solution exists when $\Gamma(p,q,n)<0$ and a blow-up result in finite time happens when $\Gamma(p,q,n)\ge 0$ (see \cite{Sun2007,NishiharaWakasugi}). By introducing the integral of two moduli of continuity
\begin{equation*}\label{con2}
I_{\mu_1,\mu_2}:= \di\int_{0}^c\frac{1}{s}\tron{\mu_1(s)}^{\frac{q^*}{q^*+1}}\tron{\mu_2(s)}^{\frac{1}{q^*+1}}\;ds,
\end{equation*}
the authors in \cite{AnhRei2021} found out that the system \eqref{sys5} witnesses the existence of a global (in time) Sobolev solution in the case $I_{\mu_1,\mu_2}<\infty$ and a blow-up phenomenon when $I_{\mu_1,\mu_2}=\infty$. Quite recently, Chen-Dao in \cite{ChenAnh2021} obtained the sharp lifespan estimates for \eqref{sys5} in the case $\mu_1\equiv \mu_2\equiv 1$, specifically, 
\[
T_{\varepsilon}\sim 
\begin{cases}
    \exp\tron{C\varepsilon^{-p^*-1}} &\text{ if }p^*=q^*,\\
    \exp\tron{C\varepsilon^{-\max\left\{\frac{p^*(p^*q^*-1)}{p^*+1},\frac{q^*(p^*q^*-1)}{q^*+1}\right\}}} &\text{ if }p^*\neq q^*,
\end{cases}
\]
in which $\varepsilon$ stands for a small, positive constant to describe the size of the initial data. Here, the lifespan $T_\varepsilon$ of solutions is understood as the quantity defined by
\begin{align*}\label{Lifespan_Defn}
T_\varepsilon:= \sup& \left\{T\in (0,\ity) : \mbox{There exists a unique local (in time) solution } (u,v) \mbox{ to \eqref{sys5} on [0,T)}\right.\notag \\
	&\left. \qquad\qquad\qquad \mbox{with a fixed parameter }\varepsilon>0\right\}.	
\end{align*}

In 2014, the other version of \eqref{sys1}, the so-called the semilinear structurally damped wave equations, has been investigated by D'Abbicco-Reissig in \cite{AbbRei2014}, namely
\begin{equation}\label{sys2}
\begin{cases}
u_{tt}(t,x)-\Delta u(t,x) +(-\Delta)^{\sigma}u_t(t,x)=|u(t,x)|^p,\quad &x\in \mathbb{R}^n,\, t\geq 0, \\
u(0,x)=u_0(x),\quad u_t(0,x)=u_1(x), \quad &x\in \mathbb{R}^n,
\end{cases}
\end{equation}
for any $\sigma\in [0,1]$. They proved that the solution to \eqref{sys2} exists globally when the exponent fulfills the following conditions:
$$p>
\left\{
\begin{aligned}
    &1+\dfrac{2}{n-2\sigma} &\text{ if }\sigma\in[0,1/2], \\
    &1+\dfrac{1+2\sigma}{n-1} &\text{ if }\sigma\in[1/2,1].
\end{aligned}
\right.
$$
Then, a blow-up result for $p<1+2/(n-2\sigma)$ was included in \cite{Anhblowup}, so we can claim that $p_{\text{crit}}:=1+2/(n-2\sigma)$ is sharp with $\sigma\in[0,1/2]$. Recently, Dao in \cite{Dao} (see more \cite{Anh2020}) worked with a weakly couple systems of semilinear structurally damped wave equations with the nonlinearities $|u(t,x)|^{q}, |v(t,x)|^{p}$ as follows:
\begin{equation}\label{sys4}
\begin{cases}
u_{tt}(t,x) -\Delta u(t,x) +(-\Delta)^{\sigma}u_t(t,x)=|v(t,x)|^p,\quad &x\in \mathbb{R}^n,\, t\geq 0, \\
v_{tt}(t,x) -\Delta v(t,x) +(-\Delta)^{\sigma}v_t(t,x)=|u(t,x)|^q,\quad &x\in \mathbb{R}^n,\, t\geq 0,\\
u(0,x)=u_0(x), \quad u_t(0,x)=u_1(x), \quad &x\in \mathbb{R}^n,\\
v(0,x)=v_0(x), \quad v_t(0,x)=v_1(x), \quad &x\in \mathbb{R}^n.
\end{cases}
\end{equation}
He showed that when the initial data belong to $L^1$, the critical curve of \eqref{sys4} is
\begin{equation}\label{critcurve}
\Gamma(p,q,n,\sigma):= \frac{1+\max\{p,q\}}{pq-1}=\frac{n-2\sigma}{2}.
\end{equation}
It means if $\Gamma(p,q,n,\sigma)<0$, there exists a unique global (in time) Sobolev solution, meanwhile, if $\Gamma(p,q,n,\sigma)\ge 0$, the solution blows up in finite time. The fact is that it still remains an open question to verify whether there exist global (in time) solutions or not on the critical curve \eqref{critcurve} when $\sigma$ is assumed to be any fractional number.

Motivated by those above papers, {\textit{the main interest of our work is to establish both the blow-up result and the sharp lifespan estimates for a weakly coupled system of semi-linear structurally damped wave equations with moduli of continuity in power nonlinear terms when the condition \eqref{critcurve} holds.}} For this reason, let us consider the following system:
\begin{equation}
\begin{cases} \label{eqsys}
    u_{tt}(t,x) -\Delta u(t,x) +(-\Delta)^{\sigma}u_t(t,x)=|v(t,x)|^{p^*}\mu_1(|v(t,x)|),\quad &x\in \mathbb{R}^n,\, t\geq 0,\\
    v_{tt}(t,x) -\Delta v(t,x) +(-\Delta)^{\sigma}v_t(t,x)=|u(t,x)|^{q^*}\mu_2(|u(t,x)|),\quad &x\in \mathbb{R}^n,\, t\geq 0,\\
    u(0,x)= u_0(x),\quad u_t(0,x)= u_1(x),\quad &x\in \mathbb{R}^n,\\
    v(0,x)= v_0(x),\quad v_t(0,x)= v_1(x),\quad &x\in \mathbb{R}^n,
\end{cases}
\end{equation}
where the functions $\mu_1=\mu_1(|v(t,x)|)$ and $\mu_2=\mu_2(|u(t,x)|)$ are some moduli of continuity. We assume that $\sigma\in [0,1/2]$ and $p^*, q^*>1$ belong to the critical curve described by \eqref{critcurve}. The main reason to only consider $\sigma\in [0,1/2]$ comes from the paper \cite{AbbRei2014} in which the authors have proposed to distinguish between ``parabolic like models'' in the case $\sigma\in [0,1/2]$, the so-called effective damping, and ``hyperbolic like models'' or ``wave like models'' in the case $\sigma\in (1/2,1]$, the so-called noneffective damping. To the best of authors' knowledge, it seems that nobody has ever succeeded to really determine the critical exponent for \eqref{sys2} in the context of noneffective damping. From this observation, it is quite natural to restrict our concern in terms of studying (\ref{eqsys}) with the situation of effective damping only in this paper. Without loss of generality, if we assume $p^*\leq q^*$, then the critical curve \eqref{critcurve} in the $p-q$ plane becomes
\begin{equation}\label{critcurve2}
    \frac{1+q^*}{p^*q^*-1}=\frac{n-2\sigma}{2}.
\end{equation}
It is worth pointing out that our approach used in this paper works well not only in the integer case $\sigma=0$, which has been explored in \cite{AnhRei2021}, but also in the fractional range $\sigma\in (0,1/2]$ (see more Remark \ref{Remark*}). To deal with the latter case, we need to construct new test functions without compact supports to treat the fractional Laplacian $(-\Delta)^{\sigma}$ appearing in (\ref{eqsys}). Throughout this work, one can recognize that our results give positive answers to several remaining quesions arising in the previous papers, especially, with respect to catching sharp estimates for lifespan of solutions (see Remark \ref{Interesting.Remark}, later). This is really the novelty of our paper. \medskip

\textbf{Notations} \medskip

We use the following notations throughout this paper.
\begin{itemize}
\item[$\bullet$] We write $f\lesssim g$ when there exists a constant $C>0$ such that $f\le Cg$, and $f \approx g$ when $g\lesssim f\lesssim g$.
\item[$\bullet$] As usual, the spaces $H^a$ and $\dot{H}^a$ with $a \ge 0$ stand for Bessel and Riesz potential spaces based on $L^2$ spaces. We denote $\widehat{f}(t,\xi):= \mathfrak{F}_{x\rightarrow \xi}\big(f(t,x)\big)$ as the Fourier transform with respect to the space variable of a function $f(t,x)$.
\item[$\bullet$] For a given number $s \in \R$, we denote $[s]:= \max \big\{k \in \Z : k\le s \big\}$ as its integer part.
\item[$\bullet$] We put $|x|:= \sqrt{x_1^2+x_2^2+\cdots+ x_n^2}$, the norm of $x \in \R^n$.
\item[$\bullet$] Finally, we introduce the initial data space $((u_0,u_1,(v_0,v_1))\in \mathcal{D}:=\tron{(H^{[n/2]+1}\cap L^1)\times(H^{[n/2]}\cap L^1)}^2$ with the corresponding norm
\begin{align*}          
\mathcal{D}[u_0,u_1,v_0,v_1] &:=\|u_0\|_{H^{[n/2]+1}}+\|u_0\|_{L^1}+\|u_1\|_{H^{[n/2]}}+\|u_1\|_{L^1}+\|v_0\|_{H^{[n/2]+1}}+\|v_0\|_{L^1}+\|v_1\|_{H^{[n/2]}}+\|v_1\|_{L^1}.
\end{align*}
\end{itemize}

Our main results for the case $p^*\leq q^*$, provided that $p^*,q^*$ belong to the curve \eqref{critcurve2}, read as follows. 

\begin{theorem}[\textbf{Blow-up}] \label{thr3.1}
    Let $\sigma\in \left[0,1/2\right]$ and $n>2\sigma$. We assume that the initial data $u_0,v_0,u_1,v_1\in L^1 $ satisfy
    \begin{equation}\label{con7}
        \int_{\mathbb{R}^n}\left(u_0(x)+u_1(x)\right)dx>0\quad \text{and}\quad \int_{\mathbb{R}^n}\left(v_0(x)+v_1(x)\right)dx>0.
    \end{equation}
    Moreover, we suppose the following assumptions of modulus of countinuity:
    \begin{equation}\label{con8}
        s^{k}\mu^{(k)}_{j}(s)= o\big(\mu_j(s)\big)\text{ as }s\to +0\text{ with }j,k=1,2
    \end{equation}
    and
    \begin{equation}\label{con9}
    \int_{0}^c\frac{1}{s}\tron{\mu_1(s)}^{\frac{q^*}{q^*+1}}\tron{\mu_2(s)}^{\frac{1}{q^*+1}}ds=\infty,
    \end{equation}
    where $c>0$ is a suitable small constant. Then, there is no global (in time) Sobolev solution to \eqref{eqsys} in the class
    $$ (u,v) \in \big(\mathcal{C}([0,\infty),L^2)\big)^2. $$
\end{theorem}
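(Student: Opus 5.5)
We argue by contradiction: suppose $(u,v)\in(\mathcal{C}([0,\infty),L^2))^2$ is a global Sobolev solution. The plan is the test function method with a scaling parameter $R$, as in \cite{AnhRei2021}, adapted to $(-\Delta)^\sigma$. We take $\varphi(x)=\langle x\rangle^{-n-2\sigma}$ for $\sigma\in(0,1/2]$; for $\sigma=0$ a compactly supported bump suffices. This $\varphi$ is not compactly supported, but it has the key property $|(-\Delta)^\sigma\varphi(x)|\lesssim\varphi(x)$. For the time cutoff we take $\eta\in C_c^\infty([0,\infty))$ with $\eta=1$ on $[0,1/2]$, $\eta=0$ on $[1,\infty)$, and $\eta'\le0$. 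Set $\psi_R(t,x)=\eta(t/R^{\alpha})\varphi(x/R)$, where $\alpha=2$ is chosen to match the parabolic scaling $t\sim|x|^2$ of the effective regime $\sigma\le 1/2$, so that the terms $\partial_t^2\psi_R$, $\Delta\psi_R$ and $(-\Delta)^\sigma\partial_t\psi_R$ are controlled by $R^{-2}\eta^*\varphi_R$ up to lower order. To avoid the usual loss in Hölder's inequality we use the refined weight $\eta^{*}_R=\eta(t/R^2)\chi_{[R^2/2,R^2]}(t)$ only on the support of $\eta'$, and put high powers $\eta^{k}$ into $\psi_R$.

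First we derive the weak formulation. Define
\[
I_R=\int_0^\infty\!\!\int |v|^{p^*}\mu_1(|v|)\psi_R\,dx\,dt,\qquad J_R=\int_0^\infty\!\!\int |u|^{q^*}\mu_2(|u|)\psi_R\,dx\,dt .
\]
Testing the equations gives $I_R+\varepsilon_u R^n\lesssim R^{-2}\iint|u|\,\eta^*_R\varphi_R$, where \eqref{con7} makes the data term positive and of size $R^n$ since $\varphi_R\to1$ pointwise. Symmetrically, $J_R+\varepsilon_v R^n\lesssim R^{-2}\iint|v|\,\eta^*_R\varphi_R$.

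Next we use Jensen's inequality with the convex functions $\Phi_1(s)=s^{p^*}\mu_1(s)$ and $\Phi_2(s)=s^{q^*}\mu_2(s)$. Convexity near $0$ follows from \eqref{con8} with $k=1,2$, and we extend $\Phi_j$ convexly for large $s$. This gives
\[
\iint|u|\eta^*\varphi_R\lesssim R^{n+2}\,\Phi_2^{-1}\!\left(\frac{\tilde J_R}{R^{n+2}}\right),
\]
where $\tilde J_R$ is the portion of $J_R$ restricted to the support of $\eta^*$. Condition \eqref{con8} also gives $\Phi_j^{-1}(s)\approx s^{1/p}\big/\mu(s^{1/p})^{1/p}$ with slowly varying corrections. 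Combining the two inequalities and using the critical relation \eqref{critcurve2}, $\frac{1+q^*}{p^*q^*-1}=\frac{n-2\sigma}{2}$, the powers of $R$ cancel exactly, since in the $L^1$ scaling $n-2\sigma$ plays the role of $n$ after accounting for $\langle x\rangle^{-n-2\sigma}$ decay; this is where criticality enters. The result is a differential-type inequality in the spirit of \cite{AnhRei2021}. With $Y(R)=\int_0^R \tilde I_\rho\,\rho^{-1}d\rho$, we have $RY'(R)=\tilde I_R$, and the inequality reads
\[
Y(R)\ \ge\ c\,\varepsilon\qquad\text{and}\qquad RY'(R)\ \gtrsim\ G\big(Y(R)\big)^{-1}\ \text{in a suitable form},
\]
where $G$ is built from $\mu_1^{q^*/(q^*+1)}\mu_2^{1/(q^*+1)}$. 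Separating variables gives
\[
\log R\ \lesssim\ \int_{Y(R_0)}^{Y(R)}\frac{dY}{H(Y)},
\]
and the substitution $s=$ (the natural size of $u,v$) turns the right-hand side into $\int_{\cdot}^{c}\frac1s\mu_1^{q^*/(q^*+1)}\mu_2^{1/(q^*+1)}\,ds$ evaluated up to a bounded level. The divergence \eqref{con9} then forces blow-up of $Y$ at finite $R$, which contradicts globality.

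The main obstacle is the nonlocal operator. Since $\varphi$ has only polynomial tails, we must check that $(-\Delta)^\sigma\varphi_R$ satisfies $|(-\Delta)^\sigma\varphi_R|\lesssim R^{-2\sigma}\varphi_R$. We must also check that $R^{-2\sigma}\cdot R^{-2}$ from the mixed term $(-\Delta)^\sigma\partial_t\psi_R$ is dominated by $R^{-2}$ (true for $R\ge1$), and that the tail integrals $\int\varphi_R\approx R^n$ remain finite, which requires $n>2\sigma$ and uses the $L^1$ data. A second delicate point is converting the Jensen step into a clean ODE inequality while keeping the exact moduli. There we would follow the scheme of \cite{AnhRei2021}, using \eqref{con8} to treat $\mu_j(\lambda s)\approx\mu_j(s)$ for $\lambda$ in compact sets.
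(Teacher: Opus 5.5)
There is a genuine gap in the scaling of your test function. You take $\psi_R=\eta(t/R^2)\varphi(x/R)$, the heat scaling $t\sim|x|^2$. For $\sigma\in(0,1/2]$, however, the diffusive balance of $u_{tt}-\Delta u+(-\Delta)^\sigma u_t=0$ is $(-\Delta)^\sigma u_t\approx\Delta u$, i.e.\ $t\sim|x|^{2(1-\sigma)}$. This is why the paper uses $\varphi(R^{-1/2}x)$ and $\eta(R^{\sigma-1}t)$. Your own observation that $(-\Delta)^\sigma\partial_t\psi_R=O(R^{-2-2\sigma})$ is negligible is the symptom: in the correct scaling this term must balance $\Delta\psi_R$.

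In the paper, $\Delta\phi_R$ and $(-\Delta)^\sigma\partial_t\phi_R$ are both $O(R^{-1})$, and $\partial_t^2\phi_R=O(R^{2\sigma-2})$ is lower order exactly because $\sigma\le 1/2$. The support has volume $\approx R^{\frac n2+1-\sigma}$. Jensen then gives $G_p'\ge C\tau_1G_q^{p^*}$ and $G_q'\ge C\tau_2G_p^{q^*}$ with $\tau_1=R^{-(\frac n2-\sigma)(p^*-1)}\mu_1(\cdot)$ and $\tau_2=R^{-(\frac n2-\sigma)(q^*-1)}\mu_2(\cdot)$. By \eqref{critcurve2} this yields exactly $\tau_1^{\frac{q^*}{q^*+1}}\tau_2^{\frac{1}{q^*+1}}=R^{-1}\mu_1^{\frac{q^*}{q^*+1}}\mu_2^{\frac{1}{q^*+1}}$, whose non-integrability at infinity is \eqref{con9}.

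With your scaling, the dominant term is $\Delta\psi_R=O(R^{-2})$ on a set of volume $R^{n+2}$. The same bookkeeping gives $\tau_1=R^{1-n(p^*-1)}\mu_1(\cdot)$ and $\tau_2=R^{1-n(q^*-1)}\mu_2(\cdot)$. On the critical curve this becomes $\tau_1^{\frac{q^*}{q^*+1}}\tau_2^{\frac{1}{q^*+1}}=R^{1-n\frac{p^*q^*-1}{q^*+1}}(\cdots)=R^{-1-\frac{4\sigma}{n-2\sigma}}(\cdots)$. This is integrable at infinity for any moduli, so integrating the final inequality gives no contradiction. The crude H\"older count says the same: it only yields $\varepsilon\lesssim R^{\,n-\frac{2(q^*+1)}{p^*q^*-1}}=R^{2\sigma}$. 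What you have is the $\sigma=0$ argument, which detects the curve $\frac{1+q^*}{p^*q^*-1}=\frac n2$, not \eqref{critcurve2}. Your claim that the tail $\langle x\rangle^{-n-2\sigma}$ lets ``$n-2\sigma$ play the role of $n$'' does not hold, since $\int\varphi(x/R)\,dx=R^n\|\varphi\|_{L^1}$ whatever the tail. The $-2\sigma$ comes from the time scale $|x|^{2(1-\sigma)}$.

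Even with the scaling fixed, three further points need repair.
\begin{enumerate}
\item The data term is $O(1)$, not of size $R^n$. Since $u_0+u_1\in L^1$ and $\phi_R(0,\cdot)\to1$, it tends to $\int(u_0+u_1)\,dx>0$.
\item $\Delta\psi_R$ is not supported where $\eta'\neq0$; it lives on the whole time interval, and for $\varphi=\langle x\rangle^{-n-2\sigma}$ it does not vanish near $x=0$. The critical argument needs the scale average $\int_0^R(\text{weight at scale }\lambda)\,\lambda^{-1}d\lambda$ to be controlled by the weight at scale $R$. This is what gives $G_q(R)\lesssim J_R$ in the paper, the analogue of bounding your $Y(R)$ by $I_R$. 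A weight that is nonzero near $x=0$ for all $t\le R^2$ instead picks up a factor $\approx\log(R^2/t)$, which destroys the logarithmic gain. The paper avoids this by taking $\varphi\equiv1$ on $|x|\le1$. Then the spatial-derivative weight $\phi^1_R$ lives on $|x|\ge R^{1/2}$, and the time-derivative weight $\phi^2_R$ lives on $t\in[\frac12R^{1-\sigma},R^{1-\sigma}]$.
\item Reducing the coupled system to one inequality (your ``in a suitable form'') is not automatic. The paper multiplies $G_p'\ge C\tau_1G_q^{p^*}$ by $G_q'$, integrates by parts, and uses $\tau_1'\le0$ (from \eqref{con8} and $n>2\sigma$). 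It also proves that $g_q(\rho)/\rho$ is increasing on $[R_0,R/4]$ to control $\int G_pG_q''$. This gives $G_pG_q'\gtrsim\tau_1G_q^{p^*+1}$, which is then inserted into $G_q'\ge C\tau_2G_p^{q^*}$.
\end{enumerate}
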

\begin{remark}
{\rm
    We give some examples of moduli of continuity $\mu_1$ and $\mu_2$ satisfying the condition (\ref{con9}) in Theorem \ref{thr3.1}. 
    \begin{enumerate}[1.]
    \item $\mu_1(0)=0$ and $\mu_1(s)=\left(\log \frac{1}{s}\right)^{-\alpha_1}$ with $\alpha_1>0$, $\mu_2(0)=0$ and $\mu_2(s)=\left(\log \frac{1}{s}\right)^{-\alpha_2}$ with $\alpha_2>0$, provided that
    $$
    \frac{q^*}{q^*+1} \alpha_1+\frac{1}{q^*+1} \alpha_2 \leq 1 ;
    $$
    \item $\mu_1(0)=0 \text { and } \mu_1(s)=\left(\log \frac{1}{s}\right)^{-1}\left(\log \log \frac{1}{s}\right)^{-1} \cdots\Bigg(\underbrace{\log \cdots \log \frac{1}{s}}_{m \text { times } \log }\Bigg)^{-\alpha_1}\text { with } m \in \mathbb{N}, \alpha_1>0,$ \\
    $\mu_2(0)=0 \text { and } \mu_2(s)=\left(\log \frac{1}{s}\right)^{-1}\left(\log \log \frac{1}{s}\right)^{-1} \cdots\Bigg(\underbrace{\log \cdots \log \frac{1}{s}}_{m \text { times } \log }\Bigg)^{-\alpha_2} \text { with } m \in \mathbb{N}, \alpha_2>0,$ \\
    \text { provided that } \\
    \[
    \frac{q^*}{q^*+1} \alpha_1+\frac{1}{q^*+1} \alpha_2 \leq 1 .
    \]
    \end{enumerate}    
}
\end{remark}

To state the second main result regarding sharp lifespan estimates for solutions to \eqref{eqsys}, let us replace the initial data $\left(u_0(x),u_1(x),v_0(x),v_1(x)\right)$ in \eqref{eqsys} by $\e\left(u_0(x),u_1(x),v_0(x),v_1(x)\right)$, provided that $\varepsilon$ stands for any small, positive constant, as well as introduce the following function:
    \begin{equation}\label{eq2sec4}
    \di \psi= \psi(R)=\int^{R}_{R_0}\frac{1}{r}\tron{\mu_1(C_0 r^{-\frac{n}{2}+\sigma})}^{\frac{q^*}{q^*+1}}\tron{\mu_2(C_0 r^{-\frac{n}{2}+\sigma})}^{\frac{1}{q^*+1}}\;dr,
    \end{equation}
    where $R_0,C_0$ are sufficiently large, positive constants independent of $\e$.
\begin{theorem}[\textbf{Sharp lifespan estimates}]
    Let $\sigma\in \left[0,1/2\right]$ and $n$ satisfies
    \[
    \begin{cases}
        4\sigma< n\leq 2 &\text{ if }\sigma\in \left[0,1/2\right),\\
        n=2 &\text{ if }\sigma= 1/2.
    \end{cases}
    \]
    We assume that the initial data $((u_0,u_1,(v_0,v_1))\in \mathcal{D}$ fulfill the relation \eqref{con7} and the following assumptions of modulus of continuity hold:
        \begin{equation}\label{con1thr4.1}
        s^{k}\mu^{(k)}_{j}(s)=\mathcal{O}\big(\mu^{(k-1)}_j(s)\big)\text{ as }s\to +0\text{ with }j,k=1,2
    \end{equation}
    and \eqref{con9}. Especially, when $p^*=q^*$, a further assumption required is that $s\in (0,c]\to \dfrac{\mu_1(s)}{\mu_2(s)}$ is a decreasing function in which $c>0$ is a suitable small constant independent of $\varepsilon$. Then, there exists a positive constant $\varepsilon_0$ such that for any $\varepsilon\in (0,\varepsilon_0]$ the lifespan $T_{\varepsilon}$ of Sobolev solutions
    $$ (u,v)\in \left(\mathcal{C}\left([0, \infty),H^1\cap L^{\infty}\right)\right)^2 $$
    to \eqref{eqsys} enjoys the following estimates:
    \[
    T_{\varepsilon}\sim
    \begin{cases}
    \tron{\psi^{-1}\tron{C\varepsilon^{-(p^*-1)}}}^{1-\sigma} &\text{ if $p^*=q^*$},\\
    \tron{\psi^{-1}\tron{C\varepsilon^{-\frac{q^*(p^*q^*-1)}{1+q^*}}}}^{1-\sigma} &\text{ if $p^*<q^*$},
    \end{cases}
    \]
    where $C$ is positive constant depending only on $\sigma,n,\mathcal{D}[u_0,u_1,v_0,v_1],\mu_1,\mu_2$ and $ \psi^{-1}$ stands for the inverse function of $\psi$ defined in \eqref{eq2sec4}.
\end{theorem}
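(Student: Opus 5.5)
The lifespan statement needs two matching bounds: an upper bound from a test-function argument in the spirit of Theorem \ref{thr3.1}, and a lower bound from a local existence argument carried out in time-weighted norms. I treat them separately, always writing $T=R^{1-\sigma}$, since parabolic scaling for $\sigma\le 1/2$ links time $R^{1-\sigma}$ to space $R$ through the dissipative part $|\xi|^{2(1-\sigma)}$ of the symbol.

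\textbf{Upper bound.} I will use the weak formulation with a test function $\Phi_R(t,x)=\eta_R(t)\phi_R(x)$. Here $\eta_R(t)=\eta(t/R^{1-\sigma})$ is a cutoff in time. The spatial factor is $\phi_R(x)=\langle x/R\rangle^{-n-2\sigma}$; it has no compact support, but it satisfies the pointwise estimate $|(-\Delta)^\sigma\phi_R|\lesssim R^{-2\sigma}\phi_R$ and the analogous bound for $\Delta$. Set
\[
I_R=\int\!\!\int|v|^{p^*}\mu_1(|v|)\Phi_R\,dx\,dt,\qquad J_R=\int\!\!\int|u|^{q^*}\mu_2(|u|)\Phi_R\,dx\,dt .
\]
Assumption \eqref{con7} gives
\[
\varepsilon C_1+I_R\lesssim \int\!\!\int|u|\,\bigl(|\partial_t^2\Phi_R|+|\Delta\Phi_R|+|\partial_t(-\Delta)^\sigma\Phi_R|\bigr)\,dx\,dt,
\]
and the same with the roles of $u$ and $v$ exchanged. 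Next I apply Jensen's inequality to the convex function $s\mapsto s^{q^*}\mu_2(s)$; its convexity near zero follows from \eqref{con1thr4.1}, which also lets me compare $\mu_j(\lambda s)\approx\mu_j(s)$. This bounds the right-hand side by roughly $R^{\frac{n-2\sigma}{2}\cdot\frac{\cdot}{\cdot}}\,\bigl(\cdots\bigr)$ in terms of $J_R$ restricted to the support of $\partial\eta_R$. The critical relation \eqref{critcurve2} makes every power of $R$ cancel, leaving only the moduli evaluated at $C_0R^{-n/2+\sigma}$.

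To extract $\psi$, I will use the standard slicing in the scale parameter. I set $Y(R)=\int_{R_0}^R \widetilde J_r\,r^{-1}dr$, where $\widetilde J_r$ is $J$ localized by the derivative of the cutoff. This gives the differential inequality
\[
\bigl(\varepsilon+Y(R)\bigr)^{\frac{p^*q^*}{\cdot}}\lesssim \frac{R\,Y'(R)}{\mu_1^{\frac{q^*}{q^*+1}}\mu_2^{\frac{1}{q^*+1}}(C_0R^{-\frac n2+\sigma})}
\]
in the case $p^*<q^*$. Integrating from $R_0$ to $R$ yields $\psi(R)\lesssim \varepsilon^{-\frac{q^*(p^*q^*-1)}{1+q^*}}$, and therefore $T_\varepsilon\lesssim\bigl(\psi^{-1}(C\varepsilon^{-\cdots})\bigr)^{1-\sigma}$. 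When $p^*=q^*$ the two equations have symmetric weights. There I will use the monotonicity of $\mu_1/\mu_2$ to bound the mixed product by the $\psi$-integrand, and the exponent becomes $p^*-1$.

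\textbf{Lower bound.} I will set up a contraction on $[0,T]$ in a space $X(T)$ with weighted norms. The weights are the linear decay rates: for $L^2$, $(1+t)^{-\frac{n}{4(1-\sigma)}}$ is multiplied by the $L^2$ norm, and analogously for $\dot H^1$ and $L^\infty$. The decay rate of $L^\infty$ is exactly $(1+t)^{-\frac{n-2\sigma}{2(1-\sigma)}\cdot\frac{\cdot}{\cdot}}$, so $|u|\lesssim C_0\varepsilon\,(1+t)^{-\frac{n-2\sigma}{2(1-\sigma)}}$; I will check the correct exponent, namely that $t=r^{1-\sigma}$ gives $u\sim r^{-n/2+\sigma}$. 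The restriction $4\sigma<n\le 2$ is what I expect to make the $L^1$–$L^2$ and $L^\infty$ estimates of the linear part valid with these rates.

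The nonlinear Duhamel term, estimated with the $L^1$-based linear estimates, produces integrals of the form
\[
\int_0^t (1+s)^{-1}\,\mu_2\bigl(C_0(1+s)^{-\frac{n-2\sigma}{2(1-\sigma)}}\bigr)\,ds\,\|u\|_{X}^{q^*}.
\]
After the substitution $s=r^{1-\sigma}$ this integral becomes $\psi$-like. Combining the two equations with the same weights as in the upper bound, I expect to get
\[
\|(u,v)\|_{X(T)}\le C\varepsilon+C\,\psi\bigl(T^{1/(1-\sigma)}\bigr)^{\theta}\,\|(u,v)\|_{X(T)}^{\cdots},
\]
which closes as long as $\psi(T^{1/(1-\sigma)})\le c\,\varepsilon^{-\cdots}$.

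\textbf{Main obstacle.} The step I expect to be hardest is obtaining exactly the mixed weight $\mu_1^{q^*/(q^*+1)}\mu_2^{1/(q^*+1)}$ in the existence argument rather than a cruder quantity. For this I plan to iterate the two Duhamel estimates, substituting the bound for $v$ into the equation for $u$, and then apply H\"older's inequality in time with exponents $\frac{q^*+1}{q^*}$ and $q^*+1$. A second delicate point is proving the commutator-type estimate for $(-\Delta)^\sigma$ applied to the non-compactly supported $\phi_R$.
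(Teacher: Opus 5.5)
You have the right overall architecture: a test-function ODI for the upper bound, and a weighted Duhamel estimate closed by a continuity argument for the lower bound. But several of your steps would fail as written.

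\textbf{Upper bound.} First, a scaling slip. For $\sigma\le 1/2$ the parabolic pairing is $t\sim|x|^{2(1-\sigma)}$, so time $R^{1-\sigma}$ goes with space $R^{1/2}$ (the paper uses $\varphi(R^{-1/2}x)$), not with space $R$. With $\langle x/R\rangle^{-n-2\sigma}$ and $\eta(t/R^{1-\sigma})$, the three terms have sizes $R^{-2+2\sigma}$, $R^{-2}$, $R^{-1-\sigma}$, so the powers of $R$ do not cancel on the critical curve.

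More seriously, the term $\eta_R(t)\Delta\phi_R(x)$ is not supported on $\operatorname{supp}\eta_R'$. It lives on the whole cylinder $\{t\le R^{1-\sigma}\}$, and for a Japanese-bracket weight at any scale it does not vanish near $x=0$. This leaves two options, both bad:
\begin{itemize}
\item If your $\widetilde J_r$ omits this term, the Jensen bound is no longer expressed through $RY'(R)$.
\item If it includes it, then $\int_{R_0}^R\widetilde J_r\,r^{-1}dr$ picks up a factor $\log(R/R_0)$ from a fixed neighbourhood of the origin in $(x,t)$. It is then no longer $\lesssim J_R$, and that logarithm is exactly what the critical case cannot afford.
\end{itemize}
The paper's construction is designed to avoid this. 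Since $\varphi\equiv 1$ on the unit ball, $\Delta\phi_R$ is supported in $Q^1_R=\{|x|\ge R^{1/2}\}$. The $\partial_t$-terms, including the nonlocal $(-\Delta)^\sigma\partial_t\phi_R$, sit in the time annulus $Q^2_R$. Each region stays away from the core in one variable ($|x|\ge\lambda^{1/2}$, resp.\ $t\ge\lambda^{1-\sigma}/2$), so $\int_0^R(\phi^j_\lambda)^{\delta_1}\lambda^{-1}d\lambda$ stays bounded.

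Two further steps are asserted rather than derived. You state a single ODI with the mixed weight without saying how $G_p'\gtrsim\tau_1G_q^{p^*}$ and $G_q'\gtrsim\tau_2G_p^{q^*}$ are decoupled. The paper multiplies by $G_q'$, integrates by parts, and controls $\int G_pG_q''$ through the monotonicity of $g_q(\rho)/\rho$. You also give no reason why, for $p^*=q^*$, the exponent is $p^*-1$ rather than $p^*(p^*-1)$, which is what the $p^*<q^*$ formula gives at $p^*=q^*$. In the paper this comes from the splitting $\omega(R)=\psi^{-1}(\psi(R)/2)$ together with the growing lower bound $G_p(\omega(R))\gtrsim\varepsilon^{p^*}\psi(\omega(R))$. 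For $p^*<q^*$ one only has $G_q(\omega(R))\gtrsim\varepsilon^{q^*}$.

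\textbf{Lower bound.} Weighting both components by their linear decay rates cannot work for $p^*<q^*$. On the critical curve the Duhamel integral for $u$ is $\int_0^t(1+\tau)^{-1+s(p^*,q^*)}\cdots\,d\tau$ with $s(p^*,q^*)=\frac{q^*-p^*}{(1-\sigma)(p^*q^*-1)}>0$, which grows polynomially. Closing in such a norm therefore gives at best a polynomial lower bound for $T_\varepsilon$. Your own integrand $(1+\tau)^{-1}\mu_2(\cdots)$ in the $v$-equation already presupposes that $u$ carries the extra factor $(1+t)^{s(p^*,q^*)}$. With the linear rate for $u$ it would instead be $(1+\tau)^{-1-q^*s(p^*,q^*)}$ times the modulus.

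H\"older in time also goes the wrong way. It bounds $\int\mu_1^{q^*/(q^*+1)}\mu_2^{1/(q^*+1)}\frac{d\tau}{1+\tau}$ by the separate integrals of $\mu_1$ and $\mu_2$. The existence argument needs the opposite: the Duhamel terms bounded by the mixed quantity.

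The paper gets the mixed weight by building an asymmetric weight into the norm of $u$, namely $(1+t)^{-s(p^*,q^*)}\gamma(t)^{-1}\Psi(t)^{\beta(p^*,q^*)}$ with $\gamma=(\mu_1/\mu_2)^{1/(q^*+1)}$ evaluated at $c(1+t)^{-\ell}$. Then $\gamma^{q^*}\mu_2$ is exactly the mixed integrand in the $v$-equation. The paper closes this by a continuity argument in $T$ (following Ikeda--Ogawa). Finally it replaces $\gamma$ by $\tilde\gamma(t)=\mu_1\bigl(C_1(1+t)^{-\frac{1+q^*}{(1-\sigma)(p^*q^*-1)}}\bigr)$, using the assumption on $s^k\mu_j^{(k)}$, so that $\Psi$ coincides with $\psi$ after the substitution $1+t\approx r^{1-\sigma}$.
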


\begin{remark} \label{Interesting.Remark}
{\rm
We want to point out that the continuous, increasing property of the function $\psi$ is to guarantee the existence of its inverse function $\psi^{-1}$. In addition, our result covers many classes of equations.
\begin{enumerate}[1.]
    \item If $\sigma=0$ in \eqref{eqsys}, we obtain the sharp lifespan estimate for the weakly coupled system of semilinear damped wave equations with critical nonlinearity as follows:
    $$ T_\e \sim 
    \left\{
    \begin{aligned}
        &\psi^{-1}\tron{C\e^{-(p^*-1)}}&\text{ if }p^*=q^*,\\
        &\psi^{-1}\tron{C\e^{-\max\left\{\frac{p^*(p^*q^*-1)}{p^*+1},\frac{q^*(p^*q^*-1)}{q^*+1}\right\}}}&\text{ if }p^*\neq q^*.
    \end{aligned}
    \right.
    $$
    \item If $u\equiv v$ in \eqref{eqsys}, we obtain the sharp lifespan estimate for semilinear structurally damped wave equations with critical nonlinearity as follows:
    $$ 
    T_\e \sim \left[\psi^{-1}\tron{C\e^{-(p^*-1)}}\right]^{1-\sigma}.
    $$
    \item If $\mu_1\equiv\mu_2\equiv 1$ and $\sigma=0$ in \eqref{eqsys}, we recover the sharp lifespan estimate for the weakly coupled system of semilinear damped wave equations obtained in \cite{ChenAnh2021} as follows:
    $$ T_\e \sim 
    \left\{
    \begin{aligned}
        &\exp\tron{C\e^{-(p^*-1)}}&\text{ if }p^*=q^*,\\
        &\exp\tron{C\e^{-\max\left\{\frac{p^*(p^*q^*-1)}{p^*+1},\frac{q^*(p^*q^*-1)}{q^*+1}\right\}}}&\text{ if }p^*\neq q^*.
    \end{aligned}
    \right.
    $$
    \item If $\mu_1\equiv \mu_2\equiv 1$ and $\sigma\in (0,1/2]$ in \eqref{eqsys}, we obtain the sharp lifespan estimate for the weakly coupled system of semilinear structurally damped wave equations as follows:
    $$ T_\e \sim 
    \left\{
    \begin{aligned}
        &\left[\exp\tron{C\e^{-(p^*-1)}}\right]^{1-\sigma}&\text{ if }p^*=q^*,\\
        &\left[\exp\tron{C\e^{-\max\left\{\frac{p^*(p^*q^*-1)}{p^*+1},\frac{q^*(p^*q^*-1)}{q^*+1}\right\}}}\right]^{1-\sigma} &\text{ if }p^*\neq q^*.
    \end{aligned}
    \right.
    $$
\end{enumerate}

}    
\end{remark}
\section{Blow-up in finite time}
\subsection{Preliminaries}
In this section, let us briefly sketch the result for the global (in time) existence of small data solutions to \eqref{eqsys}. To establish this, we have in mind the corresponding linear models of \eqref{eqsys} as follows:
\begin{equation}\label{eqlin}
\begin{cases}
    w_{tt}(t,x) -\Delta w(t,x) +(-\Delta)^{\sigma}w_t(t,x)=0, \quad &x\in \mathbb{R}^n,\, t\geq 0,\\
    w(0,x)=w_0(x), \quad w_t(0,x)=w_1(x) ,\quad &x\in \mathbb{R}^n.
\end{cases}
\end{equation}

\begin{proposition}\label{prop2.1} Let $\sigma \in\left[0, 1/2\right]$ in \eqref{eqlin}. The Sobolev solutions to \eqref{eqlin} satisfy the $\left(L^1 \cap L^2\right)-L^2$ estimates
\begin{align}
\left\|w(t, \cdot)\right\|_{L^2} &\lesssim(1+t)^{-\frac{n}{4(1-\sigma)}}\left\|w_0\right\|_{L^1 \cap L^2}+(1+t)^{-\frac{n}{4(1-\sigma)}+\frac{\sigma}{1-\sigma}}\left\|w_1\right\|_{L^1 \cap L^2}, \label{eq1prop2.1} \\
\left\|\nabla w(t, \cdot)\right\|_{L^2} &\lesssim(1+t)^{-\frac{n}{4(1-\sigma)}-\frac{1}{2(1-\sigma)}}\left\|w_0\right\|_{L^1 \cap H^1}+(1+t)^{-\frac{n}{4(1-\sigma)}-\frac{1-2\sigma}{2(1-\sigma)}}\left\|w_1\right\|_{L^1 \cap L^2}, \label{eq2prop2.1}
\end{align}
and the $(L^1\cap L^2)-L^{\infty}$ estimate
\begin{equation}\label{eq3prop2.1}
\left\|w(t, \cdot)\right\|_{L^\infty} \lesssim(1+t)^{-\frac{n}{2(1-\sigma)}}\left\|w_0\right\|_{L^1 \cap H^{[n/2]+1}}+(1+t)^{-\frac{n}{2(1-\sigma)}+\frac{\sigma}{1-\sigma}}\left\|w_1\right\|_{L^1 \cap H^{[n/2]}},
\end{equation}
for any $n$ being subject to
\[
\begin{cases}
    n>4\sigma &\text{ if }\sigma\in \left[0,1/2\right), \\
    n\geq 1 &\text{ if }\sigma=1/2.
\end{cases}
\]
\end{proposition}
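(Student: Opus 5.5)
We would prove Proposition \ref{prop2.1} by working entirely on the Fourier side. Applying the partial Fourier transform to \eqref{eqlin} gives, for each $\xi$, the ODE
\[
\widehat{w}_{tt}+|\xi|^{2\sigma}\widehat{w}_t+|\xi|^2\widehat{w}=0,
\]
whose characteristic roots are
\[
\lambda_\pm(\xi)=\tfrac12\Big(-|\xi|^{2\sigma}\pm\sqrt{|\xi|^{4\sigma}-4|\xi|^2}\Big).
\]
Then
\[
\widehat{w}=\frac{\lambda_+e^{\lambda_-t}-\lambda_-e^{\lambda_+t}}{\lambda_+-\lambda_-}\,\widehat{w}_0+\frac{e^{\lambda_+t}-e^{\lambda_-t}}{\lambda_+-\lambda_-}\,\widehat{w}_1,
\]
and we localize with a smooth partition $\chi_{\rm int}+\chi_{\rm mid}+\chi_{\rm ext}=1$ into $|\xi|\le\varepsilon$, $\varepsilon\le|\xi|\le N$, $|\xi|\ge N$.

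\textbf{Low frequencies.} Since $\sigma<1$ forces $4\sigma<2$, for small $|\xi|$ we have $|\xi|^{4\sigma}\gg|\xi|^2$ when $\sigma<1/2$. Hence the roots are real and
\[
\lambda_+\sim-|\xi|^{2-2\sigma},\qquad \lambda_-\sim-|\xi|^{2\sigma},\qquad \lambda_+-\lambda_-\sim|\xi|^{2\sigma}.
\]
This gives the pointwise bounds
\[
|\widehat{w}|\lesssim e^{-c|\xi|^{2-2\sigma}t}\big(|\widehat{w}_0|+|\xi|^{-2\sigma}|\widehat{w}_1|\big).
\]
For $\sigma=1/2$ the roots are complex, $\lambda_\pm=|\xi|(-1\pm i\sqrt3)/2$, and we instead get $e^{-c|\xi|t}$ with $|\xi|^{-1}$ in front of $\widehat{w}_1$; this is consistent with the same formula.

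We then use $\|\widehat{w}_j\|_{L^\infty}\le\|w_j\|_{L^1}$ and the scaling computation
\[
\int_{|\xi|\le\varepsilon}|\xi|^{2a}e^{-2c|\xi|^{2-2\sigma}t}\,d\xi\lesssim(1+t)^{-\frac{n+2a}{2-2\sigma}},
\]
which is valid provided $n+2a>0$. With $a=0$ this gives the $w_0$ rate $-\frac{n}{4(1-\sigma)}$. With $a=-2\sigma$ it gives the $w_1$ rate $-\frac{n}{4(1-\sigma)}+\frac{\sigma}{1-\sigma}$, and here integrability requires exactly $n>4\sigma$, which explains that hypothesis. For $\nabla w$ we take $a=1$ and $a=1-2\sigma$, producing the extra factors $\frac{1}{2(1-\sigma)}$ and $\frac{1-2\sigma}{2(1-\sigma)}$.

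For the $L^\infty$ estimate we use $\|w\|_{L^\infty}\le\|\widehat{w}\|_{L^1}$ and bound
\[
\int_{|\xi|\le\varepsilon}|\xi|^{a}e^{-c|\xi|^{2-2\sigma}t}\,d\xi\lesssim(1+t)^{-\frac{n+a}{2-2\sigma}},
\]
with $a=0$ and $a=-2\sigma$. This yields $-\frac{n}{2(1-\sigma)}$ and $-\frac{n}{2(1-\sigma)}+\frac{\sigma}{1-\sigma}$, with integrability now needing only $n>2\sigma$, which is implied by $n>4\sigma$. For $\sigma=1/2$ the condition $n>4\sigma$ reads $n>2$, yet the statement allows $n\ge1$. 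We would handle this with the finer observation that $(e^{\lambda_+t}-e^{\lambda_-t})/(\lambda_+-\lambda_-)$ is bounded by $t\,e^{-c|\xi|t}$ as well as by $|\xi|^{-1}e^{-c|\xi|t}$. Taking the minimum of the two bounds and splitting at $|\xi|=1/t$ recovers the rate $t^{1-n/2}$ in $L^2$ (i.e.\ $-\frac{n}{2}+1$, which matches the claimed $-\frac{n}{4(1-\sigma)}+\frac{\sigma}{1-\sigma}$ at $\sigma=1/2$) without any singular integrand. The same min-trick handles the $L^2$ estimate with $\sigma<1/2$ near the threshold if needed.

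\textbf{Middle and high frequencies.} On the compact middle zone, $\operatorname{Re}\lambda_\pm\le-c<0$, because the roots have negative real part and are continuous and nonvanishing there; points where $\lambda_+=\lambda_-$ are handled by the representation via $t e^{\lambda t}$, which remains bounded. This gives exponential decay $e^{-ct}$ times $\|w_0\|_{L^2}+\|w_1\|_{L^2}$. For large $|\xi|$ with $\sigma<1/2$, the roots are complex with $\operatorname{Re}\lambda_\pm=-\tfrac12|\xi|^{2\sigma}\le-c$ and $|\lambda_+-\lambda_-|\sim|\xi|$, so again we obtain $e^{-ct}$ with $|\xi|^{-1}$ gained on $\widehat{w}_1$. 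Consequently the $L^2$ and $\dot H^1$ parts need only $\|w_0\|_{H^1}$ and $\|w_1\|_{L^2}$.

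For $L^\infty$ at high frequency we use
\[
\|\widehat{w}\|_{L^1(|\xi|\ge N)}\lesssim\big\|\langle\xi\rangle^{-s}\big\|_{L^2}\,\|\langle\xi\rangle^{s}\widehat{w}\|_{L^2},
\]
with $s=[n/2]+1>n/2$. This demands $w_0\in H^{[n/2]+1}$ and $w_1\in H^{[n/2]}$, the latter because of the $|\xi|^{-1}$ gain, and it decays exponentially.

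\textbf{Main obstacle.} We expect the main obstacle to be the precise low-frequency analysis at $w_1$: the factor $|\xi|^{-2\sigma}$ (respectively $|\xi|^{-1}$ when $\sigma=1/2$), together with the threshold dimensions and the transition region where the two roots coalesce (exactly $|\xi|=2^{1/(4\sigma-2)}$, or for $\sigma=1/2$ the ratio is fixed). We would first try to avoid any dependence on the explicit form of the roots there by the mean-value bound $t\,e^{\max\operatorname{Re}\lambda_\pm\,t}$.
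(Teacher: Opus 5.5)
Your argument is correct. It uses the same Fourier-multiplier framework as the paper, but it is self-contained where the paper cites. The paper quotes the two $L^2$ estimates from Matsumura and Dao. For \eqref{eq3prop2.1} it does only the high-frequency part by hand, using Hausdorff--Young plus Cauchy--Schwarz against $|\xi|^{[n/2]+1}$ and $|\xi|^{[n/2]}$, essentially as in your last step. It takes the low-frequency part from Proposition 4.1 of D'Abbicco with $q=\infty$, $q_0=q_1=1$.

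You instead derive every low-frequency bound directly from $|\lambda_+|\approx|\xi|^{2-2\sigma}$, $|\lambda_-|\approx|\xi|^{2\sigma}$, $|\lambda_+-\lambda_-|\approx|\xi|^{2\sigma}$ and a scaling integral. This makes visible that $n>4\sigma$ is exactly the integrability condition for $|\xi|^{-2\sigma}\widehat{w}_1$ in $L^2$. Your three-zone split with the representation $t\int_0^1 e^{(s\lambda_++(1-s)\lambda_-)t}\,ds$ also handles the point where the roots coincide. The paper's two-zone split writes $\mathcal{K}_0,\mathcal{K}_1$ with $\sqrt{1-4|\xi|^{2-4\sigma}}$ in a denominator and does not comment on that point. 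Finally, your bound $\min\{t,|\xi|^{-1}\}e^{-c|\xi|t}$ for $\sigma=1/2$ is what makes the low dimensions $n=1,2$ allowed in the statement work. Plain scaling fails there, and the paper leaves this to the cited sources. The paper's route is shorter; yours checks every exponent and dimension condition directly.

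A few small corrections:
\begin{enumerate}
\item The opening claim should read ``$\sigma<1/2$ forces $4\sigma<2$''.
\item The roots coalesce at $|\xi|=4^{1/(4\sigma-2)}=2^{-1/(1-2\sigma)}$, not at $2^{1/(4\sigma-2)}$.
\item For $\sigma=1/2$ and $n=1$ the $L^\infty$ estimate also needs your min-trick. The integral $\int_{|\xi|\le\varepsilon}|\xi|^{-1}e^{-c|\xi|t}\,d\xi$ diverges logarithmically in one dimension, and your remark that $n>2\sigma$ follows from $n>4\sigma$ only covers $\sigma<1/2$. Splitting at $|\xi|=1/t$ gives a bounded integral, which is exactly the claimed rate $(1+t)^{0}$.
\item The remark that the min-trick also helps for $\sigma<1/2$ near the threshold is unnecessary, since $n>4\sigma$ is assumed strictly. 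It would not work anyway: the crossover scale $|\xi|\sim t^{-1/(2\sigma)}$ lies below the diffusive scale $t^{-1/(2-2\sigma)}$, so for $n\le 4\sigma$ you only get a worse rate. The two scales coincide exactly when $\sigma=1/2$, which is why the trick succeeds there.
\end{enumerate}
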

\begin{proof}
    Since the first two estimates \eqref{eq1prop2.1}, \eqref{eq2prop2.1} have been proved in Lemma 1 in \cite{Mat1976} and Proposition 1 in \cite{Dao}, we only sketch the proof of the last one. Note that the application of the partial Fourier transform to \eqref{eqlin} gives the following expression:
    \[
    \widehat{w}(t,\xi)= \left(\mathcal{K}_0(t,\xi)+\frac{|\xi|^{2\sigma}}{2}\mathcal{K}_1(t,\xi)\right)\widehat{w_0}(\xi)+ \mathcal{K}_1(t,\xi)\widehat{w_1}(\xi),
    \]
    where
\[
\mathcal{K}_0(t,\xi)=
\begin{cases}
e^{\frac{-|\xi|^{2 \sigma}}{2} t} \cosh \left(\dfrac{|\xi|^{2 \sigma} \sqrt{1-4|\xi|^{2-4 \sigma}}}{2} t\right) &\text{if }\sigma<1/2\text{ and } |\xi| \leq 2^{-(1-2 \sigma)^{-1}}, \\
e^{\frac{-|\xi|^{2 \sigma}}{2} t} \cos \left(\dfrac{|\xi|^{2 \sigma} \sqrt{4|\xi|^{2-4 \sigma}-1}}{2} t\right) &\text{if }\sigma<1/2 \text{ and } |\xi|>2^{-(1-2 \sigma)^{-1}}, \\
e^{-\frac{|\xi|}{2} t} \cos \left(\dfrac{|\xi|\sqrt{3}}{2} t\right) &\text{if }\sigma = 1/2,
\end{cases}
\]
and
\[
\mathcal{K}_1(t,\xi)=
\begin{cases}
\dfrac{2 e^{\frac{-|\xi|^{2 \sigma}}{2} t}}{|\xi|^{2 \sigma} \sqrt{1-4|\xi|^{2-4\sigma}}} \sinh \left(\dfrac{|\xi|^{2 \sigma} \sqrt{1-4|\xi|^{2-4 \sigma}}}{2} t\right) &\text{if }\sigma<1/2 \text{ and } |\xi| \leq 2^{-(1-2 \sigma)^{-1}}, \\
\dfrac{2 e^{\frac{-|\xi|^{2 \sigma}}{2}} t}{|\xi|^{2 \sigma} \sqrt{4|\xi|^{2-4 \sigma}-1}} \sin \left(\dfrac{|\xi|^{2 \sigma} \sqrt{4|\xi|^{2-4 \sigma}-1}}{2} t\right) &\text{if }\sigma<1/2 \text{ and } |\xi|>2^{-(1-2\sigma)^{-1}},\\
\dfrac{2 e^{-\frac{|\xi|}{2} t}}{|\xi|\sqrt{3}} \sin \left(\dfrac{|\xi|\sqrt{3}}{2} t\right) &\text{if }\sigma = 1/2.
\end{cases}
\]
Let $\chi_k= \chi_k(r)$ with $k\in\{\rm L,H\}$ be smooth cut-off functions having the following properties:
\begin{align*}
&\chi_{\rm L}(r)=
\begin{cases}
1 &\quad \text{ if }r\leq \epsilon/2 \\
0 &\quad \text{ if }r\geq \epsilon
\end{cases}
\text{ and } \qquad
\chi_{\rm H}(r)= 1 -\chi_{\rm L}(r),
\end{align*}
where $\epsilon$ is a sufficiently small positive constant. We follow the same treatment as in the proof of Lemma 1 in \cite{Mat1976} to estimate solutions for high frequencies as follows:
\begin{align*}
&\left\|\mathfrak{F}^{-1}\tron{\tron{\mathcal{K}_0(t,\xi)+2^{-1}|\xi|^{2\sigma}\mathcal{K}_1(t,\xi)}\widehat{w_0}(\xi)\chi_{\rm H}(|\xi|)}\right\|_{L^{\infty}}\\
&\qquad \leq \left\|\tron{\mathcal{K}_0(t,\xi)+2^{-1}|\xi|^{2\sigma}\mathcal{K}_1(t,\xi)}\widehat{w_0}(\xi)\chi_{\rm H}(|\xi|)\right\|_{L^{1}}\\
&\qquad \lesssim e^{-ct}\||\xi|^{2\sigma-1}\widehat{w_0}(\xi)\chi_{\rm H}(|\xi|)\|_{L^1}\\
&\qquad \lesssim e^{-ct}\tron{\,\,\int_{|\xi|>\epsilon}|\xi|^{-2[n/2]-4+4\sigma}d\xi}^{1/2}\tron{\,\,\int_{|\xi|>\epsilon}|\xi|^{2[n/2]+2}\left|\widehat{w_0}(\xi)\right|^2d\xi}^{1/2} \lesssim e^{-ct}\left\|w_0\right\|_{H^{[n/2]+1}}
\end{align*}
and 
\begin{align*}
&\left\|\mathfrak{F}^{-1}\tron{\mathcal{K}_1(t,\cdot)\widehat{w_1}(\xi)\chi_{\rm H}(|\xi|)}\right\|_{L^{\infty}}\leq \left\|\mathcal{K}_1(t,\cdot)\widehat{w_1}(\xi)\chi_{\rm H}(|\xi|)\right\|_{L^{1}}\\
&\qquad \lesssim e^{-ct}\tron{\,\,\int_{|\xi|>\epsilon}|\xi|^{-2[n/2]-2}d\xi}^{1/2}\tron{\,\,\int_{|\xi|>\epsilon}|\xi|^{2[n/2]}\left|\widehat{w_1}(\xi)\right|^2d\xi}^{1/2} \lesssim e^{-ct}\left\|w_1\right\|_{H^{[n/2]}},
\end{align*}
where $c$ is a suitable positive constant. For low frequencies, we use the result of Proposition 4.1 in \cite{D'Abbicco2017} with $q=\infty$, $q_0=q_1=1$, $b=k=0$ to obtain
\begin{align*}
\left\|\mathfrak{F}^{-1}\tron{\widehat{w}(t,\xi)\chi_{\rm L}(|\xi|)}\right\|_{L^{\infty}}\lesssim (1+t)^{-\frac{n}{2(1-\sigma)}}\left\|w_0\right\|_{L^1}+(1+t)^{-\frac{n}{2(1-\sigma)}+\frac{\sigma}{1-\sigma}}\left\|w_1\right\|_{L^1}.
\end{align*}
Therefore, combining these estimates for high and low frequencies we may establish the desired estimate.
\end{proof}

By the aid of Proposition \ref{prop2.1} and repeating several proof steps in \cite{AnhRei2021} with minor modifications, we may conclude the following result for the global (in time) existence of small data solutions.
\begin{theorem}\label{thr2.1}
    Let $\sigma\in \left[0,1/2\right]$ and $n$ satisfies
    \[
\begin{cases}
    4\sigma<n\leq 2 &\text{ if }\sigma\in \left[0,1/2\right),\\
    n= 2 &\text{ if }\sigma=1/2.
\end{cases}
    \]
    Assume that the following assumption of modulus of continuity holds:
    \begin{equation}\label{con1thr2.1}
        s\mu'_{j}(s)=\mathcal{O}(\mu_j(s))\quad \text{as $s\to +0$ with $j=1,2$.}
    \end{equation}
    Moreover, we suppose that one of the following conditions is satisfied:
    \begin{itemize}
        \item[\rm i)] 
        \begin{equation}\label{con2thr2.1}
            \di\int_{0}^c\frac{\mu_1(s)}{s}\;ds<\infty \text{ and }\di\int_{0}^c\frac{\mu_2(s)}{s}\;ds<\infty.
        \end{equation}
        \item[\rm ii)] 
        \begin{equation}\label{con3thr2.1}
            \text{If }\di\int_{0}^c\frac{\mu_1(s)}{s}\;ds=\infty \text{ or } \di\int_{0}^c\frac{\mu_2(s)}{s}\;ds=\infty, \text{ then } \di\int_{0}^c\frac{1}{s}\tron{\mu_1(s)}^{\frac{q^*}{q^*+1}}\tron{\mu_2(s)}^{\frac{1}{q^*+1}}\;ds<\infty.
    \end{equation}
    \end{itemize}
    Here $c>0$ is a suitable small constant. Especially, when $q^*=p^*$, a further assumption required is that $s \in(0, c] \rightarrow \dfrac{\mu_1(s)}{\mu_2(s)}$ is a decreasing function. Then, there exists a constant $\varepsilon_0$ such that for any small data $\left(\left(u_0, u_1\right),\left( v_0, v_1\right)\right) \in \mathcal{D}$ satisfying the assumption $\mathcal{D}[u_0,u_1,v_0,v_1]\leq\varepsilon_0$, we have a uniquely determined global (in time) small data Sobolev solution
$$
(u, v) \in\left(\mathcal{C}\left([0, \infty),H^1\cap L^{\infty}\right)\right)^2.
$$
to \eqref{eqsys}. Additionally, the following estimates hold for $k=0,1$:
$$
\begin{aligned}
\left\|\nabla^{k} u(t, \cdot)\right\|_{L^2} & \lesssim (1+t)^{-\frac{n}{4(1-\sigma)}-\frac{k-2\sigma}{2(1-\sigma)}+s\left(p^*, q^*\right)}\gamma(t)\mathcal{D}[u_0,u_1,v_0,v_1],\\
\|u(t,\cdot)\|_{L^{\infty}}&\lesssim (1+t)^{-\frac{n}{2(1-\sigma)}+\frac{\sigma}{1-\sigma}+s(p^*,q^*)}\gamma(t)\mathcal{D}[u_0,u_1,v_0,v_1],\\
\left\|\nabla^{k} v(t, \cdot)\right\|_{L^2}&\lesssim (1+t)^{-\frac{n}{4(1-\sigma)}-\frac{k-2\sigma}{2(1-\sigma)}}\mathcal{D}[u_0,u_1,v_0,v_1],\\
\|v(t,\cdot)\|_{L^{\infty}}&\lesssim (1+t)^{-\frac{n}{2(1-\sigma)}+\frac{\sigma}{1-\sigma}}\mathcal{D}[u_0,u_1,v_0,v_1],
\end{aligned}
$$
where
\begin{equation}\label{con4thr2.1}
s\left(p^*, q^*\right):=\frac{q^*-p^*}{(1-\sigma)(p^* q^*-1)}
\end{equation}
and the weight function $\gamma=\gamma(t)$ is defined by
\begin{equation}\label{con5thr2.1}
\gamma(t):= \begin{cases}
1 & \text { if \eqref{con2thr2.1} holds}, \\
\left(\dfrac{\mu_1\left(c(1+t)^{-\ell}\right)}{\mu_2\left(c(1+t)^{-\ell}\right)}\right)^{\frac{1}{q^*+1}} & \text { if \eqref{con3thr2.1} holds},
\end{cases}
\end{equation}
with a sufficiently small constant $\ell>0$. 
\end{theorem}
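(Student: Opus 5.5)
We prove Theorem \ref{thr2.1} by the Banach fixed point theorem in a time-weighted solution space, following the scheme of \cite{AnhRei2021} with the decay rates of Proposition \ref{prop2.1} in place of those for classical damping.

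\textbf{Setting up the fixed point problem.} For $T>0$ let $X(T)=\big(\mathcal{C}([0,T],H^1\cap L^\infty)\big)^2$ with the norm
\[
\|(u,v)\|_{X(T)}:=\sup_{0\le t\le T}\Big(\gamma(t)^{-1}(1+t)^{-s(p^*,q^*)}M_u(t)+M_v(t)\Big),
\]
where $M_w(t)$ collects $(1+t)^{\frac{n}{4(1-\sigma)}+\frac{k-2\sigma}{2(1-\sigma)}}\|\nabla^k w(t)\|_{L^2}$ for $k=0,1$ and $(1+t)^{\frac{n}{2(1-\sigma)}-\frac{\sigma}{1-\sigma}}\|w(t)\|_{L^\infty}$. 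Define the operator
\[
N[u,v]=\big(u^{\rm lin}+K_1\ast_{(t)}\big(|v|^{p^*}\mu_1(|v|)\big),\; v^{\rm lin}+K_1\ast_{(t)}\big(|u|^{q^*}\mu_2(|u|)\big)\big),
\]
i.e. the Duhamel representation with the linear kernel $K_1$ of \eqref{eqlin}. We will prove
\[
\|N[u,v]\|_{X(T)}\lesssim \mathcal{D}[u_0,u_1,v_0,v_1]+\|(u,v)\|_{X(T)}^{p^*}+\|(u,v)\|_{X(T)}^{q^*},
\]
together with the analogous Lipschitz estimate, with constants uniform in $T$. Smallness of the data then gives a contraction, hence a unique global solution. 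The linear parts are estimated directly by Proposition \ref{prop2.1}; the restriction $n\le2$ (with $n>4\sigma$) is needed because the $(L^1\cap L^2)$ and $L^\infty$ estimates are used only with $H^{[n/2]}$ regularity.

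\textbf{Nonlinear terms.} On $[0,t/2]$ we use the $(L^1\cap L^2)$–$L^2$ and $(L^1\cap L^2)$–$L^\infty$ estimates of Proposition \ref{prop2.1}. On $[t/2,t]$ we use $L^2$–$L^2$ and $L^2$ or $L^\infty$ energy-type bounds. Since $\mu_j$ is increasing and $\|w(\tau)\|_{L^\infty}\lesssim (1+\tau)^{-\ell'}\|(u,v)\|_X$ with $\ell'>0$ (this uses $n>2\sigma$), we have
\[
\big\||v|^{p^*}\mu_1(|v|)\big\|_{L^1\cap L^2}\lesssim \mu_1\big(c(1+\tau)^{-\ell}\big)\,\|v\|_{L^\infty}^{p^*-1}\|v\|_{L^2\cap L^1\text{-interp}},
\]
and similarly for $|u|^{q^*}\mu_2(|u|)$; this is where \eqref{con1thr2.1} enters, to compare $\mu_j$ at comparable arguments. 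The $L^1$ norm is controlled by $\|v\|_{L^\infty}^{p^*-2}\|v\|_{L^2}^2$-type interpolation.

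\textbf{The critical time integrals.} Because $(p^*,q^*)$ lies on \eqref{critcurve2}, the time integrals reduce to quantities of the form
\[
\int_0^t(1+\tau)^{-1}\mu_1\big(c(1+\tau)^{-\ell}\big)\,d\tau\quad\text{and}\quad \int_0^t(1+\tau)^{-1}\mu_2\big(c(1+\tau)^{-\ell}\big)\gamma(\tau)^{q^*}\,d\tau .
\]
The substitution $s=c(1+\tau)^{-\ell}$ turns these into $\int_0^c\mu_j(s)/s\,ds$, or into
\[
\int_0^c \frac{1}{s}\,\mu_1^{\frac{q^*}{q^*+1}}\mu_2^{\frac{1}{q^*+1}}\,ds .
\]
Case i) makes the first two finite, which is why $\gamma\equiv1$ suffices. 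In case ii) the weight $\gamma$ is chosen exactly so that the $u$-equation produces the loss $(1+t)^{s(p^*,q^*)}\gamma(t)$, while the $v$-equation integral becomes the mixed integral above, which is finite by \eqref{con3thr2.1}. When $p^*=q^*$ we have $s=0$, and the monotonicity of $\mu_1/\mu_2$ lets us pull $\gamma(t)$ out of the integral, since $\gamma(\tau)\le\gamma(t)$.

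\textbf{Main obstacle.} The delicate step is this bookkeeping in case ii): the factor $\gamma(\tau)^{q^*}$ must combine with $\mu_2$ to give exactly $\mu_1^{q^*/(q^*+1)}\mu_2^{1/(q^*+1)}$, with no leftover power of $(1+\tau)$. That exact balance is guaranteed only on the critical curve. It also requires the $[t/2,t]$ pieces to be handled using \eqref{con1thr2.1}, so that $\mu_j(c(1+\tau)^{-\ell})\approx\mu_j(c(1+t)^{-\ell})$ for $\tau$ in that range.
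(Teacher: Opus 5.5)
Your overall strategy is the scheme the paper invokes by pointing to \cite{AnhRei2021} and Proposition \ref{prop2.1}: a contraction in the space with weight $(1+t)^{s(p^*,q^*)}\gamma(t)$ on the $u$-component, Duhamel's formula, the critical-curve identity $\gamma(\tau)^{q^*}\mu_2=\mu_1^{\frac{q^*}{q^*+1}}\mu_2^{\frac{1}{q^*+1}}$, and monotonicity of $\mu_1/\mu_2$ to pull out $\gamma(t)$ when $p^*=q^*$.

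\textbf{The failing step.} The step that fails is your treatment of $[t/2,t]$ by $L^2$--$L^2$ (energy-type) estimates. That is the classical-damping template, and it does not survive for $\sigma>0$. The symbol $|\xi|^{2\sigma}$ of the damping vanishes at $\xi=0$, so $\widehat{w}(t,0)=\widehat{w_0}(0)+t\,\widehat{w_1}(0)$, i.e. $\mathcal{K}_1(t,\xi)\to t$ as $\xi\to 0$. Hence the $L^2$--$L^2$ bound for the $w_1$-part cannot be better than $t\|w_1\|_{L^2}$. With $F_1=|v|^{p^*}\mu_1(|v|)$, on the critical curve your space only gives $\|F_1(\tau)\|_{L^2}\lesssim(1+\tau)^{-\frac{1}{1-\sigma}+s(p^*,q^*)-\frac{n-4\sigma}{4(1-\sigma)}}\mu_1\big(c(1+\tau)^{-\ell}\big)$. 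Therefore $\int_{t/2}^t(t-\tau)\|F_1(\tau)\|_{L^2}\,d\tau$ exceeds the target $(1+t)^{-\frac{n-4\sigma}{4(1-\sigma)}+s(p^*,q^*)}\gamma(t)$ by the factor $(1+t)^{\frac{1-2\sigma}{1-\sigma}}$, up to slowly varying factors from the moduli. The same loss appears in the $v$-equation. For $\sigma\in(0,1/2)$, which is exactly the new range of the theorem, this loss cannot be absorbed.

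\textbf{The remedy.} Use the $(L^1\cap L^2)$--$L^2$ and $(L^1\cap H^{[n/2]})$--$L^\infty$ estimates of Proposition \ref{prop2.1} on all of $[0,t]$. Since $\|F_1(\tau)\|_{L^1}\lesssim(1+\tau)^{-1+s(p^*,q^*)}\mu_1$, $\|F_2(\tau)\|_{L^1}\lesssim(1+\tau)^{-1}\gamma(\tau)^{q^*}\mu_2$ and $\frac{n-4\sigma}{4(1-\sigma)}<1$, the $L^2$ part over $[t/2,t]$ then closes. In $L^\infty$ with $n=2$, the kernel exponent $\frac{n-2\sigma}{2(1-\sigma)}$ equals $1$, which produces a factor $\log(1+t)$. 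You absorb it through $\bar\mu(s)\log(1/s)\to 0$, valid for every increasing $\bar\mu$ with $\int_0^c\bar\mu(s)s^{-1}\,ds<\infty$. Take $\bar\mu=\mu_j$ in case i) and $\bar\mu=\mu_1^{\frac{q^*}{q^*+1}}\mu_2^{\frac{1}{q^*+1}}$ in case ii), after using \eqref{con1thr2.1} to compare $\mu_j(c(1+\tau)^{-\ell})$ with $\mu_j(c(1+t)^{-\ell})$ on $[t/2,t]$.

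\textbf{A second gap.} Your bound for $\|F_1\|_{L^1}$ through $\|v\|_{L^\infty}^{p^*-2}\|v\|_{L^2}^2$ requires $p^*\ge 2$. For $n=2$ and $\sigma\in[0,1/2)$, the curve \eqref{critcurve2} contains pairs with $p^*<2$. For $\sigma=0$ this is every pair with $p^*<q^*$, since there $p^*=1+2/q^*$. In that range $\|v\|_{L^{p^*}}$ is not controlled by the $L^2$ and $L^\infty$ norms in your space. This part of the range needs an additional ingredient that neither your sketch nor Proposition \ref{prop2.1} provides.
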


\begin{remark}
{\rm
If we take $\sigma=0$, then we obtain the same results for the global (in time) existence of small data Sobolev solutions as Theorem 1.1 proved by Dao-Reissig in \cite{AnhRei2021}. Also, in this situation if $\mu_1\equiv\mu_2$ and $u=v$, then $p^*=q^*=1+2/(n-2\sigma)$ and our weakly coupled system can be considered as an equation. In this case, Theorem \ref{thr2.1} above recovers Theorem 3 in \cite{EbeGirRei2020}. 
}
\end{remark}
\begin{remark}
{\rm
Concerning a blow-up result, we can see that the case $\sigma=0$ was proved by Dao-Reissig in \cite{AnhRei2021}, where they used a kind of test function introduced in \cite{Ikeda2019}, however, for any $\sigma>0$ these test functions do not work so well. The difficulty comes from a nonlocal operator - the fractional Laplacian in the non-integer case of $\sigma$. For this reason, we are going to introduce a new kind of test functions which allows us to overcome this difficulty.}
\end{remark}

\subsection{Construction of test functions} \label{Testfucntions.Sec}
At first, we introduce the test function
\[
\varphi=\varphi(x):=
\begin{cases}
1 & \text{ if $|x|\leq 1$}, \\
\big(\sqrt[4]{1+(|x|-1)^4}\big)^{-1} & \text{ if $|x|\geq 1$},
\end{cases}
\]
and define
\[
\varphi^*= \varphi^*(x):=
\begin{cases}
0 &\text{ if $|x|<1$}, \\
\varphi(x) &\text{ if $|x|\geq 1$}.
\end{cases}
\]
\begin{lemma}[\cite{AnhFino}, Lemma 2.3]\label{lem9} Let $s\in (0,1]$. Then, the following estimate holds for all $x \in \mathbb{R}^n$:
$$
\left|(-\Delta)^{s}[\varphi(x)]^{-n-2s}\right| \lesssim [\varphi(x)]^{-n-2s}.
$$
\end{lemma}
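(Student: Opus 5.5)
Throughout, I read the estimate in the convention of \cite{AnhFino}, where the base function grows at infinity, so that $[\varphi(x)]^{-n-2s}$ denotes the \emph{decaying} profile. With the $\varphi$ defined above ($\varphi\le 1$, $\varphi(x)\approx \langle x\rangle^{-1}$), this profile is
$$f(x):=\big(1+(|x|-1)_+^4\big)^{-\frac{n+2s}{4}}\approx \langle x\rangle^{-n-2s}.$$
Literally, $\varphi^{-n-2s}$ grows like $|x|^{n+2s}$, and its fractional Laplacian is not even defined for $s<1$. The goal is therefore $|(-\Delta)^s f(x)|\lesssim \langle x\rangle^{-n-2s}$ for all $x\in\mathbb{R}^n$.

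\emph{Step 1 (regularity of $f$).} Since $(|x|-1)_+^4$ is $\mathcal{C}^3$ across the sphere $|x|=1$, the function $f$ is $\mathcal{C}^2$ on $\mathbb{R}^n$. A direct differentiation for $|x|\ge 2$ gives the pointwise bounds
$$|\nabla f(x)|\lesssim \langle x\rangle^{-n-2s-1},\qquad |D^2 f(x)|\lesssim \langle x\rangle^{-n-2s-2}.$$
In particular $f\in L^1\cap L^\infty$, because $n+2s>n$.

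For $s=1$ the claim follows at once from the bound on $D^2 f$. For $s\in(0,1)$ I will use the singular integral representation
$$(-\Delta)^s f(x)=c_{n,s}\int_{\mathbb{R}^n}\frac{2f(x)-f(x+y)-f(x-y)}{|y|^{n+2s}}\,dy.$$

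\emph{Step 2 (bounded region).} For $|x|\le 2$, split the integral at $|y|=1$. On $|y|<1$, second-order Taylor expansion gives an integrand bounded by $\|D^2f\|_{L^\infty}|y|^{2-n-2s}$, which is integrable near the origin. On $|y|\ge 1$, the integrand is bounded by $4\|f\|_{L^\infty}|y|^{-n-2s}$, which is integrable at infinity. Hence $|(-\Delta)^sf(x)|\lesssim 1\approx f(x)$ there.

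\emph{Step 3 (large $|x|$).} For $|x|\ge 2$, split the integral into $|y|<|x|/2$ and $|y|\ge |x|/2$.

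In the inner region, $|x\pm\theta y|\ge |x|/2$ for $\theta\in[0,1]$, so Taylor's formula and Step 1 give an integrand bounded by $|x|^{-n-2s-2}|y|^{2-n-2s}$. Integrating over $|y|<|x|/2$ yields
$$\lesssim |x|^{-n-2s-2}\,|x|^{2-2s}=|x|^{-n-4s}\le |x|^{-n-2s}.$$

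In the outer region, treat the three terms separately:
\begin{itemize}
\item the term $2f(x)$ contributes $f(x)\int_{|y|\ge|x|/2}|y|^{-n-2s}\,dy\approx f(x)|x|^{-2s}$;
\item each of the terms $f(x\pm y)$ contributes at most $(|x|/2)^{-n-2s}\|f\|_{L^1}\lesssim |x|^{-n-2s}$.
\end{itemize}
Summing the inner and outer contributions gives the claim for $|x|\ge 2$.

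The main obstacle is this outer tail. The bound $|x|^{-n-2s}$ cannot be improved there, which is exactly why the exponent of the profile must be $n+2s$: it is the largest decay rate that still matches $\langle x\rangle^{-n-2s}$ while keeping $f\in L^1$. The only other point needing care is the $\mathcal{C}^2$ matching at $|x|=1$ in Step 1, which is why the quartic power is used in the definition of $\varphi$.
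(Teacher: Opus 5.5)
Your proof is correct. The paper gives no argument of its own for this lemma; it simply imports it from \cite{AnhFino}. You are also right to reinterpret the statement. With the paper's $\varphi\le 1$, the literal function $[\varphi]^{-n-2s}$ grows like $|x|^{n+2s}$ and has no fractional Laplacian for $s<1$. The negative exponent is presumably left over from a convention with a growing weight. The form the paper actually uses, in \eqref{eq4sec3.1}, is exactly $|(-\Delta)^\sigma[\varphi]^{n+2\sigma}|\lesssim[\varphi]^{n+2\sigma}$, which is what you prove.

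Your argument is the standard near/far splitting:
\begin{itemize}
\item On $|y|<|x|/2$ you use second-order Taylor expansion with $|D^2 f(z)|\lesssim\langle z\rangle^{-n-2s-2}$. This is Lemma~\ref{lem8} with $|\alpha|=2$. It holds for all $z$ because $D^2f$ is bounded on $|z|\le 2$, so it also covers the points $1\le|x\pm\theta y|\le 2$, which your Step~1 bound, stated only for $|x|\ge2$, does not literally include.
\item On $|y|\ge|x|/2$ you use the explicit kernel tail for the $f(x)$ term and $\|f\|_{L^1}$ for the shifted terms.
\end{itemize}

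Compared with the bare citation, your route works directly for the piecewise quartic profile actually defined in the paper. It needs only $f\in\mathcal{C}^2$, the decay of $f$ and $D^2f$, and $f\in L^1$, so it does not matter whether the weight in \cite{AnhFino} is literally the same function. The one fact you take for granted is that the singular-integral formula agrees with the Fourier-multiplier definition of $(-\Delta)^s$ for such $f$ (for instance $f\in H^2$ with bounded second derivatives). This is standard, and it is the same fact the paper relies on in Lemma~\ref{lem4}.

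A small correction to your closing remark: $n+2s$ is the \emph{largest} exponent $q$ for which $|(-\Delta)^s\langle x\rangle^{-q}|\lesssim\langle x\rangle^{-q}$ can hold. However, the estimate also holds for $0<q\le n$: the tail then behaves like $\langle x\rangle^{-q-2s}$, with an extra logarithm at $q=n$. So $f\in L^1$ is a requirement of your particular tail bound, not of the lemma.
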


\begin{lemma}\label{lem8} Let $q>0$. Then, the following estimate holds for any multi-index $\alpha$ satisfying $\alpha \geq 1$ and $x \in \mathbb{R}^n$:
$$
\left|\partial_x^\alpha [\varphi(x)]^{q}\right| \lesssim[\varphi(x)]^{q+\alpha}.
$$
\end{lemma}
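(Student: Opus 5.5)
The plan is to reduce the estimate to a one-variable symbol-type bound for the radial profile of $\varphi^q$, and then to transfer it to $\mathbb{R}^n$ through the chain rule for $x\mapsto |x|$. Throughout, $\alpha$ denotes a multi-index with $|\alpha|\ge1$.

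\textbf{The two regions.} In the region $|x|<1$ we have $[\varphi(x)]^q\equiv 1$, so all derivatives of order $|\alpha|\ge 1$ vanish and the estimate is trivial. In the region $|x|>1$ we put $\rho=|x|-1>0$ and write $[\varphi(x)]^q=f(\rho)$, where
\[
f(\rho):=(1+\rho^4)^{-q/4}.
\]
The basic comparison we will use repeatedly is
\[
\varphi(x)\approx (1+\rho)^{-1}\approx |x|^{-1}\quad\text{for } |x|\geq 1,
\]
which holds because $1+\rho^4\approx(1+\rho)^4$ and $|x|=1+\rho$. The target estimate therefore becomes $|\partial_x^\alpha f(|x|-1)|\lesssim (1+\rho)^{-q-|\alpha|}$.

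\textbf{Step 1: symbol-type bound for the profile.} We show that $|f^{(k)}(\rho)|\lesssim(1+\rho)^{-q-k}$ for all $\rho\ge0$ and $k\ge0$. By induction on $k$, $f^{(k)}$ is a finite linear combination of terms of the form
\[
\rho^{a}(1+\rho^4)^{-q/4-m},\qquad a,m\ge0 \text{ integers},\quad 4m-a=k.
\]
This holds for $k=0$. Differentiating a term produces either $a\rho^{a-1}(1+\rho^4)^{-q/4-m}$, which raises $4m-a$ by one, or $\rho^{a+3}(1+\rho^4)^{-q/4-m-1}$, which also raises $4m-a$ by one. Each such term is bounded by $(1+\rho)^{a-q-4m}=(1+\rho)^{-q-k}$, which gives the claim.

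\textbf{Step 2: chain rule in $x$.} We use the Faà di Bruno formula for $\partial_x^\alpha f(|x|-1)$. It expresses this derivative as a sum of terms
\[
f^{(k)}(\rho)\prod_{i=1}^k\partial_x^{\beta_i}|x|,\qquad \beta_1+\dots+\beta_k=\alpha,\quad |\beta_i|\ge1,\quad 1\le k\le|\alpha|.
\]
Since $|x|$ is homogeneous of degree one, $|\partial^{\beta}|x||\lesssim|x|^{1-|\beta|}$ for $x\ne0$. Each term is therefore bounded by
\[
(1+\rho)^{-q-k}\,|x|^{k-|\alpha|}\approx(1+\rho)^{-q-|\alpha|}\approx[\varphi(x)]^{q+|\alpha|},
\]
using $|x|=1+\rho$. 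Summing the finitely many terms gives the lemma for $|x|>1$.

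\textbf{Main obstacle: the sphere $|x|=1$.} The delicate point is regularity across $|x|=1$, where the two definitions of $\varphi$ are glued. Because $\rho^4$ and its first three derivatives vanish at $\rho=0$, $f$ matches the constant $1$ to third order there, so $\varphi^q\in\mathcal C^3(\mathbb{R}^n)$. For $|\alpha|\le3$ the estimate then holds everywhere by continuity. For higher orders the derivatives are understood classically on $\{|x|\neq1\}$ (equivalently almost everywhere). The bounds from both sides are uniform up to the sphere, since the constants in Steps 1 and 2 do not degenerate as $\rho\to0^+$ and $|x|\ge1$ keeps $|x|^{1-|\beta|}$ bounded. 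This is exactly the sense in which the lemma will be applied to test functions.
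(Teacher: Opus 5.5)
Your proof is correct. It uses the same tool as the paper, the Fa\`a di Bruno formula, but splits the composition differently.

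The paper works directly in $x$ with outer function $h(z)=z^{-q/4}$ and inner function $1+(|x|-1)^4$. It bounds each $|\partial_x^{\gamma}(1+(|x|-1)^4)|\lesssim(|x|-1)^{4-|\gamma|}$ and arrives at $\sum_{k}(1+(|x|-1)^4)^{-q/4-k}(|x|-1)^{4k-|\alpha|}$. You instead isolate the one-dimensional profile $(1+\rho^4)^{-q/4}$ and prove the symbol bound $|f^{(k)}(\rho)|\lesssim(1+\rho)^{-q-k}$ by induction. Only then do you compose with the homogeneous function $|x|$.

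What your route buys is uniformity up to the sphere $|x|=1$. In the paper's final sum, the $k=1$ term behaves like $(|x|-1)^{4-|\alpha|}$, which blows up as $|x|\to1^+$ once $|\alpha|\geq5$. So the paper's last step, ``$\lesssim[\varphi(x)]^{q+\alpha}$ for any $|x|\geq1$'', is not justified near the sphere for high orders. The conclusion is true there, but the crude bound on derivatives of $(|x|-1)^4$ of order greater than $4$ loses it. Your bounds stay finite because $f^{(k)}$ is bounded at $\rho=0$ and $|x|^{1-|\beta|}\le 1$ for $|x|\geq1$.

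You also handle the gluing issue that the paper passes over. Since $f^{(4)}(0)=-6q\neq0$, the function $\varphi^q$ is only $\mathcal{C}^3$ across $|x|=1$, so for $|\alpha|\geq4$ the estimate has to be read off the sphere, as you state.

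For the paper's own purposes none of this matters. The lemma is only used with $|\alpha|=2$ in \eqref{eq3sec3.1}, where both arguments are valid and the paper's is marginally shorter.
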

\begin{proof}    
The case $|x|<1$ is trivial, so we consider only the case $|x|\geq 1$. For $|\alpha|\geq 1$, we have the following formula:
\[
\begin{aligned}
\partial_x^\alpha h\big(f(x)\big)=\sum_{k=1}^{|\alpha|} h^{(k)}(f(x))\left(\sum_{\substack{\gamma_1+\cdots+\gamma_k \leq \alpha \\
\left|\gamma_1\right|+\cdots+\left|\gamma_k\right|=|\alpha|,\left|\gamma_i\right| \geq 1}}\left(\partial_x^{\gamma_1} f(x)\right) \cdots\left(\partial_x^{\gamma_k} f(x)\right)\right),
\end{aligned}
\]
where $h=h(z)$ and $h^{(k)}(z)=\dfrac{d^k h(z)}{d z^k}$. If we choose $h(z)=z^{-\frac{q}{4}}$ and $f(x)=1+(|x|-1)^4$, we obtain
\begin{align*}
\left| \partial_x^\alpha [\varphi(x)]^{q}\right| &\leq \sum_{k=1}^{|\alpha|}\left(1+(|x|-1)^4\right)^{-\frac{q}{4}-k} \times\left(\sum_{\substack{\gamma_1+\cdots+\gamma_k \leq \alpha \\
\left|\gamma_1\right|+\cdots+\left|\gamma_k\right|=|\alpha|,\left|\gamma_i\right| \geq 1}}\left|\partial_x^{\gamma_1}\left(1+(|x|-1)^4\right)\right| \cdots\left|\partial_x^{\gamma_k}\left(1+(|x|-1)^4\right)\right|\right) \\
&\lesssim \sum_{k=1}^{|\alpha|}\left(1+(|x|-1)^4\right)^{-\frac{q}{4}-k} \times\left(\sum_{\substack{\gamma_1+\cdots+\gamma_k \leq \alpha \\
\left|\gamma_1\right|+\cdots+\left|\gamma_k\right|=|\alpha|,\left|\gamma_i\right| \geq 1}}(|x|-1)^{4-|\gamma_1|}\cdots(|x|-1)^{4-|\gamma_k|}\right) \\
&\lesssim \sum_{k=1}^{|\alpha|}\left(1+(|x|-1)^4\right)^{-\frac{q}{4}-k}(|x|-1)^{4k-|\alpha|}\\
&\lesssim [\varphi(x)]^{q+\alpha}\quad\text{ for any } |x|\geq 1.
\end{align*}
Hence, the proof is complete.
\end{proof}
\begin{lemma} \label{lem4}  
Let $s\in (0,1)$. Let $\varphi$ be a smooth function satisfying $\partial^2_x\varphi\in L^{\infty}$. For any $R>0$, let $\varphi_R$ be a function defined by
\[
\varphi_R(x):=\varphi(R^{-1/2}x)
\]
for all $x\in \mathbb{R}^n$. Then, $(-\Delta)^s(\varphi_R)$ satisfies the following scaling properties for all $x\in \mathbb{R}^n$:
\[
(-\Delta)^s(\varphi_R)(x)=R^{-s}\big((-\Delta)^{s}\varphi\big)(R^{-1/2}x).
\]
\end{lemma}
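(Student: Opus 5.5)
The plan is to prove the scaling identity $(-\Delta)^s(\varphi_R)(x)=R^{-s}\big((-\Delta)^s\varphi\big)(R^{-1/2}x)$ by working directly from the singular-integral representation of the fractional Laplacian. For $s\in(0,1)$ and a function with bounded second derivatives, a natural pointwise definition is the symmetric second-difference formula
\[
(-\Delta)^s f(x)=-\frac{C_{n,s}}{2}\int_{\mathbb{R}^n}\frac{f(x+y)+f(x-y)-2f(x)}{|y|^{n+2s}}\,dy .
\]
The first step is to check that this integral converges absolutely for $f=\varphi$, and hence for $f=\varphi_R$. Near $y=0$, a second-order Taylor expansion together with $\partial_x^2\varphi\in L^\infty$ bounds the numerator by $\|\partial^2_x\varphi\|_{L^\infty}|y|^2$, and $|y|^{2-n-2s}$ is integrable near the origin because $s<1$. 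For $|y|\ge 1$ we need boundedness of $\varphi$. This holds for the test function of Section \ref{Testfucntions.Sec}, and in general I would add it as an implicit hypothesis, since $(-\Delta)^s\varphi$ must make sense. With it, $|y|^{-n-2s}$ is integrable at infinity because $s>0$. Clearly $\varphi_R$ inherits both properties, with $\partial^2\varphi_R=R^{-1}(\partial^2\varphi)(R^{-1/2}\cdot)$.

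The second step is the change of variables. Write
\[
\varphi_R(x\pm y)-\varphi_R(x)=\varphi\big(R^{-1/2}x\pm R^{-1/2}y\big)-\varphi\big(R^{-1/2}x\big)
\]
and substitute $y=R^{1/2}z$, so $dy=R^{n/2}dz$ and $|y|^{n+2s}=R^{(n+2s)/2}|z|^{n+2s}$. The powers of $R$ combine to $R^{n/2-(n+2s)/2}=R^{-s}$. What remains is exactly the representation of $(-\Delta)^s\varphi$ evaluated at the point $R^{-1/2}x$, which gives the claim for every $x\in\mathbb{R}^n$.

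As a cross-check, one can argue on the Fourier side. Since $\widehat{\varphi_R}(\xi)=R^{n/2}\widehat\varphi(R^{1/2}\xi)$ and $|\xi|^{2s}=R^{-s}|R^{1/2}\xi|^{2s}$, inverting yields the same identity whenever the Fourier definition applies. Because $\varphi$ need not be integrable (for instance $\varphi\sim|x|^{-1}$ is not in $L^1$ when $n\ge2$), I will rely on the pointwise integral definition and treat the Fourier computation only as a consistency check in the distributional sense.

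I do not expect a serious obstacle. The only point needing care is justifying that the principal-value/second-difference integral is well defined, so that the change of variables is legitimate. The symmetric form removes any principal-value subtlety, and the Taylor bound handles the singularity at the origin.
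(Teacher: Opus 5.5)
Your proposal is correct and follows essentially the same route as the paper. Both use the symmetric second-difference representation of $(-\Delta)^s$, made absolutely convergent near $y=0$ by the bound on $\partial_x^2\varphi$, followed by the substitution $y=R^{1/2}z$, which produces the factor $R^{n/2-(n+2s)/2}=R^{-s}$. Your remark that some control of $\varphi$ at infinity is also needed for the integral to converge (for instance boundedness, which holds for the test functions actually used) is a fair sharpening of the stated hypotheses. Without it, a smooth function such as $\varphi(x)=|x|^2$ has bounded second derivatives but a divergent integral, and the paper uses this boundedness only implicitly.
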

\begin{proof}
    Using a well-known argument (see \cite{Anhblowup}, Lemma 3) under the assumption $\partial^{2}_x\varphi\in L^{\infty}$, we may remove the principal value of the integral at the origin to conclude
$$
\begin{aligned}
(-\Delta)^s(\varphi_R)(x) &=-\frac{C_{n, s}}{2} \int_{\mathbb{R}^n} \frac{\varphi_R(x+y)+\varphi_R(x-y)-2 \varphi_R(x)}{|y|^{n+2 s}} d y \\
&=-\frac{C_{n, s}}{2 R^{s}} \int_{\mathbb{R}^n} \frac{\varphi\left(R^{-1/2} x+R^{-1/2} y\right)+\varphi\left(R^{-1/2} x-R^{-1/2} y\right)-2 \varphi\left(R^{-1/2} x\right)}{\left|R^{-1/2} y\right|^{n+2 s}} d\left(R^{-1/2} y\right) \\
&=R^{-s}\big((-\Delta)^s \varphi\big)\left(R^{-1/2} x\right).
\end{aligned}
$$
This completes our proof.
\end{proof}
Next, we introduce the functions $\eta(t) \in \mathcal{C}_0^{\infty}$ and $\eta^*(t)$, fulfilling 
\[
\eta(t)=
\begin{cases}
    1 &\text{ if }0\leq t\leq 1/2\\
    \text{decreasing} &\text{ if }1/2\leq t\leq 1\\
    0 &\text{ if }t\geq 1
\end{cases}
\quad \text{ and }\quad
\eta^*(t)=
\begin{cases}
0 &\text{ if }0\leq t<1/2\\
\eta(t) &\text{ if }t\geq 1/2.
\end{cases}
\]
It is clear to check that $\eta(t)$ is a decreasing function on $[0,\infty)$ and
\begin{equation}\label{as2}
\begin{aligned}
|\eta^{\prime}(t)|+|\eta^{\prime\prime}(t)|\lesssim C\text{ for any $t\in [0,\infty)$}.
\end{aligned}
\end{equation}
For $R$ is a large parameter in $[0,\infty)$, we introduce the functions
\[
\eta_R(t)=\eta\tron{R^{\sigma-1}t}\text{ and }\eta^*_{R}(t)=\eta^*\tron{R^{\sigma-1}t},
\]
and 
\[
\varphi_R(x)=\varphi\tron{R^{-1/2}x}\text{ and }\varphi^*_R(x)=\varphi^*\tron{R^{-1/2}x}.
\]
Now we define the test functions
\begin{equation}\label{testfunc}
\begin{aligned}
&\phi_{R}(x,t)=\left[\varphi_R(x)\right]^{n+2\sigma}[\eta_{R}(t)]^{\nu+2},\\
&\phi^1_{R}(x,t)=\left[\varphi^*_R(x)\right]^{n+2\sigma+2}[\eta_{R}(t)]^{\nu},\\
&\phi^2_{R}(x,t)=\left[\varphi_R(x)\right]^{n+2\sigma}[\eta^*_{R}(t)]^{\nu},
\end{aligned}
\end{equation}
where the parameter $\nu>0$ will be fixed later. Obviously, one sees that
\[
\begin{aligned}
&\text{supp}\phi_R(x,t)\subset Q_R:=\left\{(x,t):x\in \mathbb{R}^n \text{ and } t\in[0,R^{1-\sigma}]\right\} \\
&\text{supp}\phi^1_R(x,t)\subset Q^1_R:=\left\{(x,t):|x|\geq R^{1/2} \text{ and }t\in [0,R^{1-\sigma}]\right\},\\
&\text{supp}\phi^2_R(x,t)\subset Q^2_{R}:=\left\{(x,t):x\in \mathbb{R}^n \text{ and }t\in \left[\frac{1}{2}R^{1-\sigma}, R^{1-\sigma}\right]\right\}.
\end{aligned}
\]
Next, straightforward calculations gives
\begin{equation}\label{eq1sec3.1}
\begin{aligned}
|\partial_t \phi_R(x,t)|&=[\varphi_R(x)]^{n+2\sigma}\left|\partial_t[\eta_R(t)^{\nu+2}]\right|\lesssim R^{\sigma-1}[\varphi_R(x)]^{n+2\sigma}[\eta^*_R(t)]^{\nu+1}
\end{aligned}
\end{equation}
and
\begin{equation}\label{eq2sec3.1}
\begin{aligned}
|\partial^2_t\phi_R(x,t)|&=[\varphi_R(x)]^{n+2\sigma}\left|\partial^2_t[\eta_R(t)]^{\nu+2}\right|\\
&=[\varphi_R(x)]^{n+2\sigma}\left|(\nu+2)(\nu+1)[\eta^*_R(t)]^{\nu}[\eta^{\prime}_R(t)]^2+(\nu+2)[\eta^*_R(t)]^{\nu+1}[\eta^{\prime\prime}_R(t)]\right|\\
&\lesssim R^{2(\sigma-1)}[\varphi_R(x)]^{n+2\sigma}[\eta^*_R(t)]^{\nu}.
\end{aligned}
\end{equation}
Furthermore, using Lemma \ref{lem8} we arrive at
\begin{equation}\label{eq3sec3.1}
|\Delta \phi_R(x,t)|=\left|\eta_R(t)R^{-1}(\Delta [\varphi^*]^{n+2\sigma})(R^{-1/2}x)\right|\lesssim R^{-1}[\eta_R(t)]^{\nu+2}[\varphi_R^*(x)]^{n+2\sigma+2}.
\end{equation}
To deal with the fractional Laplacian operator, we employ Lemma \ref{lem9} and Lemma \ref{lem4} to estimate as follows:
\begin{align}\label{eq4sec3.1}
 |(-\Delta)^{\sigma}\partial_t\phi_R(x,t)| &\lesssim \left|R^{-1}\partial_t\eta^*_R(t)[\eta^*_R(t)]^{\nu+1}((-\Delta)^{\sigma}[\varphi]^{n+2\sigma})(R^{-1/2}x)\right| \notag \\
 &\lesssim R^{-1}[\eta^*_R(t)]^{\nu+1}[\varphi_R(x)]^{n+2\sigma}.   
\end{align}
\subsection{Proof of Theorem \ref{thr3.1}}
As we can see that the case $\sigma=0$ has been proved in \cite{AnhRei2021}, so we restrict ourselves to give a proof for the case $\sigma\in \left(0,1/2\right]$. For the ease of reading, let us firstly sketch our proof into several main steps as follows:
\begin{itemize}
    \item Step 1: Introduce two functionals $I_{R}$, $J_{R}$ and obtain early estimates.
    \item Step 2: Define two functions $\Phi_p$, $\Phi_q$ and catch some estimates combined with two functionals in Step 1.
    \item Step 3: Introduce auxiliary functions $g_q$, $g_p$, $G_p$, $G_q$ and establish their ordinary differential inequalities.
    \item Step 4: Estimate the achieved ordinary differential inequalities in Step 3 to complete the proof.
\end{itemize}
Next, we will present step by step in more details. \medskip

\noindent\textbf{$\bullet$ Step 1:} We define the following two functionals:
\[
\begin{aligned}
I_{R}&:=\int_{0}^{\infty}\int_{\mathbb{R}^n}|v(x,t)|^{p^*}\mu_1(|v(x,t)|)\phi_{R}(x,t)\;dxdt=\int_{Q_{R}}|v(x,t)|^{p^*}\mu_1(|v(x,t)|)\phi_{R}(x,t)d(x,t),\\    J_{R}&:=\int_{0}^{\infty}\int_{\mathbb{R}^n}|u(x,t)|^{q^*}\mu_2(|u(x,t)|)\phi_{R}(x,t)\;dxdt=\int_{Q_{R}}|u(x,t)|^{q^*}\mu_2(|u(x,t)|)\phi_{R}(x,t)d(x,t),   
\end{aligned}
\]
where the function $\phi_{R}(x,t)$ is defined as \eqref{testfunc}. Let us assume that
$$ (u,v)=\big(u(x,t),v(x,t)\big)\in \big(\mathcal{C}([0,\infty),L^2\big)^2 $$
is a global (in time) Sobolev solution to \eqref{eqsys}. We multiply the left-hand sides of \eqref{eqsys} by $\phi_R(x,t)$ and use the argument in \cite{Anhblowup} with the aid of Lemmas \ref{lem5}, \ref{lem6}, \ref{lem7} to achieve
\begin{equation}\label{esIR}
\begin{aligned}
0\leq I_{R}&=-\int_{\mathbb{R}^n}\left[u_1(x)\phi_R(x,0)-u_0\left(\partial_t\phi_R(0,x)-(-\Delta)^{\sigma}\phi_R(0,x)\right)\right]\;dx\\
&\quad +\int_{Q_{R}}u(x,t)\big(\partial^2_t \phi_{R}(x,t)-\Delta\phi_{R}(x,t)-(-\Delta)^{\sigma}\partial_t  \phi_{R}(x,t)\big)\;d(x,t)\\
&=:-\mathcal{D}^{\sigma}(u_0,u_1)+I^*_{R},
\end{aligned}
\end{equation}
\begin{equation}\label{esJR}
\begin{aligned}
0\leq J_{R}&=-\int_{\mathbb{R}^n}\left[v_1(x)\phi_R(x,0)-v_0\left(\partial_t\phi_R(0,x)-(-\Delta)^{\sigma}\phi_R(0,x)\right)\right]\;dx\\
&\quad +\int_{Q_{R}}v(x,t)\big(\partial^2_t \phi_{R}(x,t)-\Delta\phi_{R}(x,t)-(-\Delta)^{\sigma}\partial_t\phi_{R}(x,t)\big)\;d(x,t)\\
&=:-\mathcal{D}^{\sigma}(v_0,v_1)+J^*_{R}.
\end{aligned}
\end{equation}
To estimate $I_{R}^*$, $J_{R}^*$, we follow the method in \cite{EbeGirRei2020}, \cite{AnhRei2021}, \cite{AnhRei2019}. Recalling the estimates \eqref{eq2sec3.1}, \eqref{eq3sec3.1} and \eqref{eq4sec3.1}, we obtain
\begin{equation}\label{eq1sec3.2}
    |\partial^2_t  \phi_{R}(x,t)-\Delta\phi_{R}(x,t)-(-\Delta)^{\sigma}\partial_t  \phi_R(x,t)|\lesssim R^{-1}\big(\phi^1_{R}(x,t)+\phi^2_R(x,t)\big)
\end{equation}
for any $(x,t)\in Q_{R}$. Hence, we claim the following estimates:
    \begin{equation}\label{esIR*}
\begin{aligned}
\left|I_R^*\right| &\lesssim R^{-1}\tron{ \int_{Q^1_R}|u(x, t)|\phi^1_R(x,t)d(x,t)+\int_{Q^2_R}|u(x,t)|\phi^2_R(x,t)d(x, t)}
\end{aligned}
\end{equation}
and
\begin{equation}\label{esJR*}
\begin{aligned}
\left|J_R^*\right| &\lesssim  R^{-1}\tron{ \int_{Q^1_R}|u(x, t)|\phi^1_R(x,t)d(x,t)+\int_{Q^2_R}|u(x,t)|\phi^2_R(x,t)d(x, t)}.
\end{aligned}
\end{equation}

\noindent\textbf{$\bullet$ Step 2:} Let us introduce two functions $\Phi_p=\Phi_p(s)=s^{p^*} \mu_1(s)$ and $\Phi_q=\Phi_q(s)=s^{q^*} \mu_2(s)$. Then, for any $\delta <1$ we derive
\begin{equation}\label{eq2sec3.2}
\begin{aligned}
\Phi_q\left(|u(x, t)|\left(\phi^1_R(x, t)\right)^{1-\delta}\right) &=|u(x, t)|^{q^*}\left(\phi^1_R(x, t)\right)^{(1-\delta)q^*}\mu_2\left(|u(x, t)|\left(  \phi^1_R(x, t)\right)^{1-\delta}\right) \\
&\leq|u(x, t)|^{q^*}\left(  \phi^1_R(x, t)\right)^{(1-\delta)q^*} \mu_2(|u(x, t)|)=\Phi_q(|u(x, t)|)\left(  \phi^1_R(x, t)\right)^{(1-\delta)q^*},
\end{aligned}
\end{equation}
in which we have utilized the increasing property of $\mu_2=\mu_2(s)$ and the relation
$$
0 \leq  \phi^1_R(x, t)\leq 1.
$$
Similarly, one has
\begin{equation}\label{eq3sec3.2}
\begin{aligned}
 \Phi_p\left(|u(x, t)|\left(\phi^2_R(x, t)\right)^{1-\delta}\right) \leq \Phi_p(|u(x, t)|)\left( \phi^2_R(x, t)\right)^{(1-\delta)p^*}.
\end{aligned}
\end{equation}
From the assumption \eqref{con8}, we imply
$$
\Phi_q^{\prime \prime}(s)=s^{q^*-2}\left(q^*\left(q^*-1\right) \mu_2(s)+2 q^* s \mu_2^{\prime}(s)+s^2 \mu_2^{\prime \prime}(s)\right) \geq 0,
$$
that is, $\Phi_q$ is a convex function on a small interval $\left(0, c_0\right]$ with a sufficiently small constant $c_0>0$. Additionally, we can choose a convex continuation of $\Phi_q$ outside this interval to guarantee that $\Phi_q$ is convex on $[0, \infty)$. Applying the generalized Jensen's inequality from Lemma \ref{lem2} with $h(s)=\Phi_q(s), f(x, t)=|u(x, t)|\left(  \phi^1_R(x, t)\right), \eta \equiv (\phi^1_R(x,t))^{\delta}$ and $\Omega \equiv  Q^1_R$, we have 
$$
\Phi_q\left(\dfrac{\dps\int_{Q^1_{R}}|u(x, t)|\phi^1_R(x, t) d(x, t)}{\dps\int_{Q^1_{R}}\big(\phi^1_R(x,t)\big)^{\delta} d(x, t)}\right) \leq \dfrac{\dps\int_{Q^1_{R}} \Phi_q\left(|u(x, t)|\left(\phi^1_R(x, t)\right)^{1-\delta}\right)\big(\phi^1_R(x,t)\big)^{\delta}d(x, t)}{\dps\int_{Q^1_{R}}\big(\phi^1_R(x,t)\big)^{\delta}d(x, t)}.
$$
We approximate the integral of the functions $\tron{\phi^1_{R}(x,t)}^{\delta}$ over $Q_1$. Due to the fact
\begin{equation}\label{eq5sec3.1}
\begin{aligned}
    \int_{Q^1_{R}}[\phi^1_R(x,t)]^{\delta}d(x, t)=\int_{Q^1_{R}}[\eta_{R}(t))]^{\nu\delta}\left[\varphi^*_R(x)\right]^{(n+2\sigma+2)\delta}\;d(x,t),
\end{aligned}
\end{equation}
using the change of variable $\tilde{x}:=R^{-1/2}x$, $\tilde{t}:=R^{\sigma-1}t$, we derive
\begin{equation}\label{eq6sec3.1}
\begin{aligned}
\int_{Q^1_{R}}[\phi^1_R(x,t)]^{\delta}d(x, t)&=R^{\frac{n}{2}+1-\sigma}\int_{0}^{1}\int_{1}^{\infty}[\varphi^* (\tilde{x})]^{(n+2\sigma+2)\delta}\,|\tilde{x}|^{n-1}[\eta(\tilde{t})]^{\nu\delta}\;d|\tilde{x}| d\tilde{t} \approx R^{\frac{n}{2}+1-\sigma},
\end{aligned}
\end{equation} 
provided that the condition $n/(n+2\sigma+2)<\delta<1$ holds. It follows that
\begin{equation}\label{es2}
\Phi_q\left(\frac{\dps\int_{Q^1_{R}}|u(x, t)|\left(\phi^1_R(x, t)\right)  d(x, t)}{CR^{\frac{n}{2}+1-\sigma}}\right) \lesssim \frac{\dps\int_{Q^1_{R}} \Phi_q\left(|u(x, t)|\left(\phi^1_R(x, t)\right)^{1-\delta}\right)\tron{\phi^1_R(x,t)}^{\delta} d(x, t)}{R^{\frac{n}{2}+1-\sigma}}.
\end{equation}
From the estimates \eqref{eq2sec3.2}, one derives
\begin{equation}\label{eq4sec3.2}
\Phi_q\left(\frac{\dps\int_{Q^1_{R}}|u(x, t)|\left(\phi^1_R(x, t)\right)   d(x, t)}{CR^{\frac{n}{2}+1-\sigma}}\right) \lesssim \frac{\dps\int_{Q^1_{R}} \Phi_q(|u(x, t)|)\left(\phi^1_R(x, t)\right)^{q^*+(1-q^*)\delta}d(x, t)}{R^{\frac{n}{2}+1-\sigma}}.
\end{equation}
Because $\mu_2=\mu_2(s)$ is a strictly increasing function, $\Phi_q=\Phi_q(s)$ is also a strictly increasing function on $[0, \infty)$. As a result, it implies from \eqref{eq4sec3.2} that
\begin{equation}\label{eq5sec3.2}
\int_{Q^1_{R}}|u(x, t)|\left(\phi^1_R(x, t)\right) d(x, t)\lesssim R^{\frac{n}{2}+1-\sigma} \Phi_q^{-1}\left(\frac{\dps\int_{Q^1_{R}} \Phi_q(|u(x, t)|)\left(\phi^1_R(x, t)\right)^{q^*+(1-q^*)\delta} d(x,t)}{CR^{\frac{n}{2}+1-\sigma}}\right).
\end{equation}
For the domain $Q^2_R$, we also conclude for any $n/(n+2\sigma)<\delta<1$ that
\begin{equation}\label{eq7sec3.1}
    \int_{Q^2_{R}}[\phi^2_R(x,t)]^{\delta}d(x, t)\approx R^{\frac{n}{2}+1-\sigma}. 
\end{equation}
By the same argument as that used to prove \eqref{eq5sec3.2}, we obtain
\begin{equation}\label{eq6sec3.2}
\int_{Q^2_{R}}|u(x, t)|\left(\phi^2_R(x, t)\right) d(x, t)\lesssim R^{\frac{n}{2}+1-\sigma} \Phi_q^{-1}\left(\frac{\dps\int_{Q^2_{R}} \Phi_q(|u(x, t)|)\left(\phi^2_R(x, t)\right)^{q^*+(1-q^*)\delta} d(x,t)}{CR^{\frac{n}{2}+1-\sigma}}\right).
\end{equation}
Collecting the estimates \eqref{esIR}, \eqref{esIR*}, \eqref{eq5sec3.2} and \eqref{eq6sec3.2} with the fact that $\Phi^{-1}_q$ is concave function, we obtain
\begin{align}
&I_R+\mathcal{D}^{\sigma}(u_0,u_1)\lesssim R^{\frac{n}{2}-\sigma} \Phi_q^{-1}\left(\frac{\dps\int_{Q^1_{R}} \Phi_q(|u(x, t)|)\left(\phi^1_R(x, t)\right)^{q^*+(1-q^*)\delta}d(x,t)}{CR^{\frac{n}{2}+1-\sigma}}\right) \nonumber \\
&\hspace{3cm}+R^{\frac{n}{2}-\sigma} \Phi_q^{-1}\left(\frac{\int_{Q^2_{R}} \Phi_q(|u(x, t)|)\left(\phi^2_R(x, t)\right)^{q^*+(1-q^*)\delta}d(x,t)}{CR^{\frac{n}{2}+1-\sigma}}\right) \nonumber \\
&\qquad\lesssim R^{\frac{n}{2}-\sigma}\Phi_q^{-1}\left(\frac{\dps\int_{Q^1_{R}} \Phi_q(|u(x, t)|)\left(\phi^1_R(x, t)\right)^{q^*+(1-q^*)\delta}d(x,t)+\int_{Q^2_{R}} \Phi_q(|u(x, t)|)\left(\phi^2_R(x, t)\right)^{q^*+(1-q^*)\delta}d(x,t)}{CR^{\frac{n}{2}+1-\sigma}}\right). \label{eq7sec3.2}
\end{align}
Analogously, one gets
\begin{equation}\label{eq8sec3.2}
\begin{aligned}
&J_R+\mathcal{D}^{\sigma}(v_0,v_1)\\
&\qquad \lesssim R^{\frac{n}{2}-\sigma}\Phi_p^{-1}\left(\frac{\dps\int_{Q^1_{R}} \Phi_p(|v(x, t)|)\left(\phi^1_R(x, t)\right)^{p^*+(1-p^*)\delta} d(x, t)+\dps\int_{Q^2_{R}} \Phi_p(|v(x, t)|)\left(\phi^2_R(x, t)\right)^{p^*+(1-p^*)\delta} d(x, t)}{CR^{\frac{n}{2}+1-\sigma}}\right).
\end{aligned}
\end{equation}
We have the following lower bounds for $\mathcal{D}^{\sigma}(u_0,u_1)$ and $\mathcal{D}^{\sigma}(u_0,u_1)$ as follows:
\begin{align*}
    &\mathcal{D}^{\sigma}(u_0,u_1)\geq\int_{\mathbb{R}^n}\tron{u_0(x)+u_1(x)-CR^{-\sigma}|u_0(x)|}\phi_{R}(0,x) dx,\\
    &\mathcal{D}^{\sigma}(v_0,v_1)\geq\int_{\mathbb{R}^n}\tron{v_0(x)+v_1(x)-CR^{-\sigma}|v_0(x)|}\phi_{R}(0,x) dx.
\end{align*}
Since $u_0,u_1,v_0,v_1\in L^1$ and $\dps\lim_{R\to \infty}\phi_R(0,x)=1$, together with assumption \eqref{con7}, we have
\begin{align*}
    &\lim_{R\to \infty}\int_{\mathbb{R}^n}\tron{u_0(x)+u_1(x)}\phi_{R}(0,x)dx=\int_{\mathbb{R}^n}\tron{u_0(x)+u_1(x)}dx>0,\\
    &\lim_{R\to \infty}\int_{\mathbb{R}^n}\tron{u_0(x)+u_1(x)}\phi_{R}(0,x)dx=\int_{\mathbb{R}^n}\tron{u_0(x)+u_1(x)}dx>0,
\end{align*}
as well as
\[
\lim_{R\to \infty}R^{-\sigma}\int_{\mathbb{R}^n}|u_0(x)|\phi_R(0,x)|dx=0\text{ and }\lim_{R\to \infty}R^{-\sigma}\int_{\mathbb{R}^n}|v_0(x)|\phi_R(0,x)|dx=0.
\]
Therefore, we can choose $R_0$ is large enough such that for all $R\geq R_0$, the following relations are true:
\[
\mathcal{D}^{\sigma}(u_0,u_1)\geq \frac{1}{2}\int_{\mathbb{R}^{n}}\tron{u_0(x)+u_1(x)}d x>0\text{ and }\mathcal{D}^{\sigma}(v_0,v_1)\geq \frac{1}{2}\int_{\mathbb{R}^{n}}\tron{v_0(x)+v_1(x)}d x>0.
\] 
Combining this with \eqref{eq7sec3.2} and \eqref{eq8sec3.2}, for all $R\geq R_0$ we gain
\begin{align}
    I_R &\lesssim R^{\frac{n}{2}-\sigma} \Phi_q^{-1}\left(\frac{\dps\int_{Q^1_{R}} \Phi_q(|u(x, t)|)\left(\phi^1_R(x, t)\right)^{q^*+(1-q^*)\delta} d(x, t)+\dps\int_{Q^2_{R}} \Phi_q(|u(x, t)|)\left(\phi^2_R(x, t)\right)^{q^*+(1-q^*)\delta} d(x, t)}{CR^{\frac{n}{2}+1-\sigma}}\right) \label{eq9sec3.2}, \\
    J_R &\lesssim R^{\frac{n}{2}-\sigma} \Phi_p^{-1}\left(\frac{\dps\int_{Q^1_{R}} \Phi_p(|v(x, t)|)\left(\phi^1_R(x, t)\right)^{p^*+(1-p^*)\delta} d(x, t)+\dps\int_{Q^2_{R}} \Phi_p(|v(x, t)|)\left(\phi^2_R(x, t)\right)^{p^*+(1-p^*)\delta} d(x, t)}{CR^{\frac{n}{2}+1-\sigma}}\right).\label{eq10sec3.2}
\end{align}

\noindent\textbf{$\bullet$ Step 3:} For $\lambda>0$ and $s>0$, we define the following auxiliary functions:
$$
\begin{aligned}
& g_q=g_q(\lambda)=\int_{Q^1_{R}} \Phi_q(|u(x, t)|)\left(\phi^1_\lambda(x, t)\right)^{q^*+(1-q^*)\delta} d(x, t)+\int_{Q^2_{R}} \Phi_q(|u(x, t)|)\left(\phi^2_\lambda(x, t)\right)^{q^*+(1-q^*)\delta} d(x, t),\\
& g_p=g_p(\lambda)=\int_{Q^1_{R}} \Phi_p(|v(x, t)|)\left(\phi^1_\lambda(x, t)\right)^{p^*+(1-p^*)\delta} d(x, t)+\int_{Q^2_{R}} \Phi_p(|v(x, t)|)\left(\phi^2_\lambda(x, t)\right)^{p^*+(1-p^*)\delta} d(x, t),\\
&G_p=G_p(s)=\int_0^s g_p(\lambda)\lambda^{-1} d \lambda \quad \text{ and } \quad G_q=G_q(s)=\int_0^s g_q(\lambda)\lambda^{-1} d \lambda.
\end{aligned}
$$
Therefore, we can express
\[
\begin{aligned}
G_q(R)&=\int_0^R\left(\int_{Q^1_{R}} \Phi_q(|u(x, t)|)\left(\phi^1_\lambda(x, t)\right)^{q^*+(1-q^*)\delta} d(x, t)\right) \lambda^{-1} d \lambda+ \int_0^R\left(\int_{Q^2_{R}} \Phi_q(|u(x, t)|)\left(\phi^2_\lambda(x, t)\right)^{q^*+(1-q^*)\delta} d(x, t)\right) \lambda^{-1} d \lambda \\
&=\int_{Q^1_{R}} \Phi_q(|u(x, t)|)\tron{\int_{0}^{R}\tron{\left(\phi^1_\lambda(x, t)\right)^{q^*+(1-q^*)\delta}\lambda^{-1}\;d \lambda}}d(x, t)\\
&\quad+\int_{Q^2_{R}} \Phi_q(|u(x, t)|)\tron{\int_{0}^{R}\tron{\left(\phi^2_\lambda(x, t)\right)^{q^*+(1-q^*)\delta}\lambda^{-1}\;d \lambda} }d(x, t).
\end{aligned}
\]
Using the change of variables 
\[
\tilde{x}=\frac{x}{\lambda^{1/2}} \text{ and }\tilde{t}=\frac{t}{\lambda^{1-\sigma}},
\]
we have
\[
\begin{aligned}
    \text{supp}\big(\varphi^*(|\tilde{x}|)\big)^{\delta_1(n+2\sigma+2)}|\tilde{x}|^{-1}\subset [1,\infty)\quad \text{ and }\quad \text{supp}\big(\eta^*(\tilde{t})\big)^{\delta_1\nu}\tilde{t}^{-1}\subset [1/2,1]
\end{aligned}
\]
with a sufficiently small constant $\delta_1$, which will be chosen later. Since the function $\varphi$ is decreasing on $\left[0,\infty\right)$, the function $\eta$ is decreasing on $\left[0,1\right]$. Hence, we may conclude from the fact
$$
\frac{t}{\lambda^{1-\sigma}}\geq \frac{t}{R^{1-\sigma}}\quad \text{ and }\quad |x|\lambda^{-1/2}\geq |x|R^{-1/2}
$$
for $\lambda\in (0,R)$ that
$$
\phi^1_\lambda(x,t)\leq\phi^1_R(x,t) \text{ for } (x,t)\in Q^1_R\quad \text{ and } \quad \phi^2_\lambda(x,t)\leq\phi^2_R(x,t) \text{ for } (x,t)\in Q^2_R.
$$
Summarizing, we arrive at the estimate for $G_q(R)$ as follows:
$$
\begin{aligned}
G_q(R) & \lesssim \int_{Q^1_{R}} \Phi_q(|u(x, t)|)\left(\int_{0}^{R}\left(\phi^1_{\lambda}(x,t)\right)^{q^*+(1-q^*)\delta-\delta_1}\left(\phi^1_{\lambda}(x,t)\right)^{\delta_1}(\lambda)^{-1}d \lambda\right) d(x, t)\\
&+\int_{Q^2_{R}} \Phi_q(|u(x, t)|)\left(\int_{0}^{R}\left(\phi^2_{\lambda}(x,t)\right)^{q^*+(1-q^*)\delta-\delta_1}\left(\phi^2_{\lambda}(x,t)\right)^{\delta_1}(\lambda)^{-1}d \lambda\right) d(x, t)\\
&\lesssim \int_{Q^1_{R}} \Phi_q(|u(x, t)|)[  \phi^1_R(x,t)]^{q^*+(1-q^*)\delta-\delta_1}\tron{\int_{1}^{\infty}\left(\varphi^*(|\tilde{x}|)\right)^{\delta_1(n+2\sigma+2)}|\tilde{x}|^{-1}\;d|\tilde{x}|}\;d(x,t)\\
&\quad+\int_{Q^2_{R}} \Phi_q(|u(x, t)|)[  \phi^2_R(x,t)]^{q^*+(1-q^*)\delta-\delta_1}\tron{\int_{1/2}^{1}\left(\eta^*(\tilde{t})\right)^{\nu\delta_1}\tilde{t}^{-1}\;d\tilde{t}}\;d(x,t)\\
& \lesssim \int_{Q^1_{R}} \Phi_q(|u(x, t)|)[  \phi^1_R(x,t)]^{q^*+(1-q^*)\delta-\delta_1}d(x,t)+\int_{Q^2_{R}} \Phi_q(|u(x, t)|)[  \phi^2_R(x,t)]^{q^*+(1-q^*)\delta-\delta_1}d(x,t),
\end{aligned}
$$
where we used the relation
\[
0\leq \varphi_R(x),\, \eta_R(t)\leq 1.
\]
An analogous argument implies
\begin{align}
    G_p(R) &\lesssim \int_{Q^1_{R}} \Phi_p(|v(x, t)|)\left(\phi^1_{R}\left(t,x\right)\right)^{p^*+(1-p^*)\delta-\delta_1}d(x, t) +\int_{Q^2_{R}} \Phi_p(|v(x, t)|)\left(\phi^2_{R}\left(t,x\right)\right)^{p^*+(1-p^*)\delta-\delta_1}d(x, t).
\end{align}
Due to the fact $\phi^j_R(x,t)\leq \big(\phi_R(x,t)\big)^{\frac{\nu}{\nu+2}}$ with $j=1,2$, it follows that
\begin{equation}\label{eq11sec3.2}
G_q(R) \lesssim \int_{Q_{R}}|u(x, t)|^{q^*} \mu_2(|u(x, t)|)[\phi_{R}(x,t)]^{\frac{\nu}{\nu+2}\tron{q^*+(1-q^*)\delta-\delta_1}} d(x, t), 
\end{equation}
\begin{equation}\label{eq12sec3.2}
G_p(R) \lesssim \int_{Q_{R}}|v(x, t)|^{p^*} \mu_1(|v(x, t)|)[\phi_{R}(x,t)]^{\frac{\nu}{\nu+2}\tron{p^*+(1-p^*)\delta-\delta_1}}d(x, t).
\end{equation}
Since $q^*\geq p^*>1$, we can choose $\delta_1$ and $\nu$ satisfying
\[
    0<\delta_1<\delta+(1-\delta)p^*-1, \text{ that is, } p^*+(1-p^*)\delta-\delta_1-1>0,
\]
and then
$$ \nu\geq \frac{2}{p^*+(1-p^*)\delta-\delta_1-1}.$$
These conditions come to verify that
\begin{align}
G_q(R) &\lesssim \int_{Q_{R}}|u(x, t)|^{q^*} \mu_2(|u(x, t)|) \phi_{R}(x,t) d(x, t)\lesssim J_R, \label{eq13sec3.2} \\
G_p(R) &\lesssim \int_{Q_{R}}|v(x, t)|^{p^*} \mu_1(|v(x, t)|)\phi_{R}(x,t)d(x, t)\lesssim I_R. \label{eq14sec3.2}
\end{align}
Thus, the definitions of $G_q$ and $g_q$ lead to
$$
\begin{aligned}
\frac{d G_q}{d s}(s) s & =g_q(s), \text { i.e. }\left(\frac{d G_q}{d s}\right)(s=R) R=g_q(R), \\
\frac{d G_p}{d s}(s) s & =g_p(s), \text { i.e. }\left(\frac{d G_p}{d s}\right)(s=R) R=g_p(R),
\end{aligned}
$$
which imply
\begin{align}
    G_q^{\prime}(R) R=g_q(R) &=\int_{Q^1_{R}} \Phi_q(|u(x, t)|)\left(\phi^1_R(x, t)\right)^{q^*+(1-q^*)\delta} d(x, t) +\int_{Q^2_{R}} \Phi_q(|u(x, t)|)\left(\phi^2_R(x, t)\right)^{q^*+(1-q^*)\delta} d(x, t), \label{eq15sec3.2}
\end{align}
and 
\begin{align}
    G_p^{\prime}(R) R=g_p(R) &=\int_{Q^1_{R}} \Phi_p(|u(x, t)|)\left(\phi^1_R(x, t)\right)^{p^*+(1-p^*)\delta} d(x, t)  +\int_{Q^2_{R}} \Phi_p(|u(x, t)|)\left(\phi^2_R(x, t)\right)^{p^*+(1-p^*)\delta} d(x, t). \label{eq16sec3.2}
\end{align}
Plugging \eqref{eq15sec3.2} into \eqref{eq9sec3.2} and \eqref{eq16sec3.2} into \eqref{eq10sec3.2}, then combining them with the estimates \eqref{eq13sec3.2} and \eqref{eq14sec3.2}, respectively, one finds that
\[
\begin{aligned}
& G_q(R) \lesssim J_R \lesssim  R^{\frac{n}{2}-\sigma} \Phi_p^{-1}\left(\frac{G_p^{\prime}(R)}{CR^{\frac{n}{2}-\sigma}}\right), \\
& G_p(R)\lesssim I_R \lesssim  R^{\frac{n}{2}-\sigma}\Phi_q^{-1}\left(\frac{G_q^{\prime}(R)}{CR^{\frac{n}{2}-\sigma}}\right),
\end{aligned}
\]
which lead to
$$
\begin{aligned}
& \Phi_p\left(\frac{G_q(R)}{CR^{\frac{n}{2}-\sigma}}\right) \lesssim \frac{G_p^{\prime}(R)}{R^{\frac{n}{2}-\sigma}}, \\
& \Phi_q\left(\frac{G_p(R)}{CR^{\frac{n}{2}-\sigma}}\right) \lesssim \frac{G_q^{\prime}(R)}{R^{\frac{n}{2}-\sigma}}.
\end{aligned}
$$
Recalling the definition of the functions $\Phi_p$ and $\Phi_q$ we derive
$$
\begin{aligned}
& \left(\frac{G_q(R)}{CR^{\frac{n}{2}-\sigma}}\right)^{p^*} \mu_1\left(\frac{G_q(R) }{CR^{\frac{n}{2}-\sigma}}\right) \lesssim \frac{G_p^{\prime}(R)}{R^{\frac{n}{2}-\sigma}}, \\
& \left(\frac{G_p(R)}{CR^{\frac{n}{2}-\sigma}}\right)^{q^*} \mu_2\left(\frac{G_p(R)}{CR^{\frac{n}{2}-\sigma}}\right) \lesssim \frac{G_q^{\prime}(R)}{R^{\frac{n}{2}-\sigma}}.
\end{aligned}
$$
Thus, it follows that
$$
\begin{aligned}
& C^{-p^*}\theta R^{-({\frac{n}{2}-\sigma})(p^*-1)}\tron{G_q(R) }^{p^*}\mu_1\left(\frac{G_q(R) }{CR^{\frac{n}{2}-\sigma}}\right) \leq G_p^{\prime}(R), \\
& C^{-q^*}\theta R^{-({\frac{n}{2}-\sigma})(q^*-1)}\tron{G_p(R) }^{q^*}\mu_2\left(\frac{G_p(R)  }{CR^{\frac{n}{2}-\sigma}}\right) \leq G_q^{\prime}(R),
\end{aligned}
$$
for all $R \geq R_0$ and $\theta\in [0,1]$ is a parameter, which will be fixed later. Due to the increasing property of the functions $\mu_1=\mu_1(s), \mu_2=\mu_2(s), G_p=$ $G_p(R)$ and $G_q=G_q(R)$, the following inequalities hold:
$$
\begin{aligned}
& C^{-p^*}\theta R^{-({\frac{n}{2}-\sigma})(p^*-1)}\mu_1\left(\frac{G_q\left(R_0\right)}{CR^{\frac{n}{2}-\sigma}}\right)\left(G_q(R)\right)^{p^*} \lesssim G_p^{\prime}(R), \\
& C^{-q^*}\theta R^{-({\frac{n}{2}-\sigma})(q^*-1)} \mu_2\left(\frac{G_p\left(R_0\right)}{CR^{\frac{n}{2}-\sigma}}\right)\left(G_p(R)\right)^{q^*} \lesssim G_q^{\prime}(R).
\end{aligned}
$$
As a consequence, one has
$$
\begin{aligned}
& C^{-p^*}\theta R^{-({\frac{n}{2}-\sigma})(p^*-1)}\mu_1\left(C_0 R^{-\frac{n}{2}+\sigma}\right)\left(G_q(R)\right)^{p^*} \lesssim G_p^{\prime}(R), \\
& C^{-q^*}\theta R^{-({\frac{n}{2}-\sigma})(q^*-1)}\mu_2\left(C_0 R^{-\frac{n}{2}+\sigma}\right)\left(G_p(R)\right)^{q^*} \lesssim G_q^{\prime}(R),
\end{aligned}
$$
for all $R\geq R_0$, where $C_0=C_0\left(C, R_0\right):=C^{-1}\min \left\{G_p\left(R_0\right), G_q\left(R_0\right)\right\}$. Denoting

$$
\tau_1(\rho):=\frac{1}{{\rho}^{({\frac{n}{2}-\sigma})(p^*-1)}} \mu_1\left(C_0 \rho^{-\frac{n}{2}+\sigma}\right), \quad \tau_2(\rho):=\frac{1}{{\rho}^{({\frac{n}{2}-\sigma})(q^*-1)}}\mu_2\left(C_0 \rho^{-\frac{n}{2}+\sigma}\right),
$$
we obtain the following system of ordinary differential inequalities for $\tau \geq R_0$ :
\begin{equation}\label{eq17sec3.2}
G_p^{\prime}(R) \geq C_1\theta \tau_1(R)\left(G_q(R)\right)^{p^*}, \\
\end{equation}
\begin{equation}\label{eq18sec3.2}
G_q^{\prime}(R) \geq C_1 \theta\tau_2(R)\left(G_p(R)\right)^{q^*}.
\end{equation}

\noindent\textbf{$\bullet$ Step 4:} For any $R\geq R_0$, after multiplying \eqref{eq17sec3.2} by $G_q^{\prime}(\tau)$ and integrating by parts over $\left[R_0, R\right]$ we arrive at
\begin{equation}\label{eq19sec3.2}
\begin{aligned}
& G_p(R) G_q^{\prime}(R)-G_p\left(R_0\right) G_q^{\prime}\left(R_0\right)-\int^R_{R_0} G_p(\rho) G_q^{\prime \prime}(\rho) d \rho \\
&\quad \geq \frac{C_1\theta}{p^*+1} \tau_1(R)\left(G_q(R)\right)^{p^*+1}-\frac{C_1\theta}{p^*+1} \tau_1\left(R_0\right)\left(G_q\left(R_0\right)\right)^{p^*+1} -\frac{C_1\theta}{p^*+1} \int_{R_0}^R \tau_1^{\prime}(\rho)\left(G_q(\rho)\right)^{p^*+1} d \rho.
\end{aligned}
\end{equation}
To control the right-hand side (RHS) of \eqref{eq19sec3.2}, we can choose a sufficiently small constant $\theta=\theta\left(R_0\right)>0$ enjoying
\[
0<\theta\leq \min\left\{\frac{(p^*+1)G_p(R_0)G'_q(R_0)}{C_1\tau_1(R_0)\tron{G_q(R_0)}^{p^*+1}},1 \right\}
\]
to verify the inequality
$$
G_p\left(R_0\right) G_q^{\prime}\left(R_0\right)-\frac{C_1\theta}{p^*+1} \tau_1\left(R_0\right)\left(G_q\left(R_0\right)\right)^{p^*+1} \geq 0 .
$$
Hence, we achieve
$$
\text { RHS of }\eqref{eq19sec3.2} \geq \frac{C_1\theta}{p^*+1} \tau_1(R)\left(G_q(R)\right)^{p^*+1}-\frac{C_1\theta}{p^*+1} \int_{R_0}^R \tau_1^{\prime}(\rho)\left(G_q(\rho)\right)^{p^*+1} d \rho.
$$
By a direct calculation, we obtain the following equality:
$$
\tau_1^{\prime}(\rho)=-\left(\tron{\frac{n}{2}-\sigma}(p^*-1)+\frac{C_0}{\rho^{\frac{n}{2}-\sigma}}\frac{\mu_1^{\prime}\left(C_0 \rho^{-\frac{n}{2}+\sigma}\right)}{\mu_1\left(C_0 \rho^{-\frac{n}{2}+\sigma}\right)}\right)\frac{\tau_1(\rho)}{\rho}.
$$
Thanks to the assumption \eqref{con8} and the condition $n>2\sigma$, it is clear that $\tau^{\prime}_1(\rho)\leq 0$ for a sufficiently large constant $R_0$, which implies 
\begin{equation}\label{eq20sec3.2}
\text{RHS of }\eqref{eq19sec3.2}\geq \frac{C_1\theta}{p^*+1} \tau_1(R)\left(G_q(R)\right)^{p^*+1}.
\end{equation}
To estimate the left-hand side (LHS) of \eqref{eq19sec3.2}, we have
\[
\begin{aligned}
G_q^{\prime \prime}(\rho)=\tron{\frac{g_q(\rho)}{\rho}}^{\prime}&=\int_{Q^1_{R}} \Phi_q(|u(x, t)|)\left(\left(\phi^1_\rho(x, t)\right)^{q^*+(1-q^*)\delta}\rho^{-1}\right)^{\prime}d(x, t)\\
&\qquad +\int_{Q^2_{R}} \Phi_q(|u(x, t)|)\left(\left(\phi^2_\rho(x, t)\right)^{q^*+(1-q^*)\delta}\rho^{-1}\right)^{\prime}d(x, t).
\end{aligned}
\]
Now we will prove that $\dfrac{g_q(\rho)}{\rho}$ is an increasing function on the interval $[R_0,R/4]$ to obtain $G^{\prime\prime}_q(\rho)\geq 0$ for any $\rho \in [R_0,R/4]$. Regarding the integral over $Q^2_R$ one observes
\[
\rho^{\sigma-1}t\geq 4^{1-\sigma}R^{\sigma-1}t\geq \frac{4^{1-\sigma}}{2}\geq 1
\]
for $\rho\in \left[R_0,R/4\right]$ due to $(x,t)\in Q^2_R$. Then, we can conclude that
\[
\eta_{\rho}^*(t)=0\text{ and }\int_{Q^2_{R}} \Phi_q(|u(x, t)|)\left(\left(\phi^2_\rho(x, t)\right)^{q^*+(1-q^*)\delta}\rho^{-1}\right)^{\prime}d(x, t)=0\text{ for $\rho \in \left[R_0,R/4\right]$}.
\]
For the integral over $Q^1_R$, we only need to show that the function
\[
\begin{aligned}
\left[\varphi^*_{\rho}(x)\right]^{(n+2\sigma+2)(q^*+(1-q^*)\delta)}\rho^{-1}&=\tron{\frac{1}{\sqrt[4]{1+\tron{\frac{|x|}{\rho^{1/2}}-1}^4}\rho^{\frac{1}{(n+2\sigma+2)(q^*+(1-q^*)\delta)}}}}^{(n+2\sigma+2)(q^*+(1-q^*)\delta)}
\end{aligned}
\]
is an increasing function with $\rho\in \left[R_0,R/4\right]$. To achive this goal, we are going to verify that
\begin{equation}\label{eq21sec3.2}
\alpha(\rho):= \rho^{\frac{4}{(n+2\sigma+2)(q^*+(1-q^*)\delta)}}+\tron{\frac{|x|}{\rho^{1/2}}-1}^4\rho^{\frac{4}{(n+2\sigma+2)(q^*+(1-q^*)\delta)}}
\end{equation}
is decreasing. Indeed,
\begin{equation}\label{eq22sec3.2}
\begin{aligned}
\alpha'(\rho)= &\frac{4}{(n+2\sigma+2)(q^*+(1-q^*)\delta)}\rho^{\frac{4}{(n+2\sigma+2)(q^*+(1-q^*)\delta)}-1}-2\tron{\frac{|x|}{\rho^{1/2}}-1}^{3}\rho^{\frac{4}{(n+2\sigma+2)(q^*+(1-q^*)\delta)}}\frac{|x|}{\rho^{3/2}}\\
&\quad +\tron{\frac{|x|}{\rho^{1/2}}-1}^{4}\frac{4}{(n+2\sigma+2)(q^*+(1-q^*)\delta)}\rho^{\frac{4}{(n+2\sigma+2)(q^*+(1-q^*)\delta)}-1}<0,
\end{aligned}
\end{equation}
which is equivalent to
\begin{equation}\label{eq23sec3.2}
\begin{aligned}
&\frac{4}{(n+2\sigma+2)(q^*+(1-q^*)\delta)}\\
&\quad+\tron{\frac{|x|}{\rho^{1/2}}-1}^{3}\tron{\frac{2|x|}{\rho^{1/2}}\tron{\frac{2}{(n+2\sigma+2)(q^*+(1-q^*)\delta)}-1}-\frac{4}{(n+2\sigma+2)(q^*+(1-q^*)\delta)}}< 0.
\end{aligned}
\end{equation}
Since $(x,t)\in Q^1_R$ and $|x| \geq R^{1/2}$, we have $\rho\leq R/4$ and the relation
\[
\frac{|x|}{\rho^{1/2}}\geq 2\frac{|x|}{R^{1/2}}\geq 2
\]
combined with
\[
\frac{2}{(n+2\sigma+2)(q^*+(1-q^*)\delta)}-1<0\text{ for $n\geq 1$},
\]
to deduce \eqref{eq23sec3.2} immediately. So we conclude that for $\rho\in \left[R_0,R/4\right]$, the function $\dfrac{g_q(\rho)}{\rho}$ is increasing which entails
\begin{equation}\label{eq24sec3.2}
\int^{R/4}_{R_0}G_p(\rho)G_q^{\prime \prime}(\rho)\;d\rho\geq 0.
\end{equation}
From the fact
\[
G^{\prime \prime}_q(\rho)=\frac{g_q^{\prime}(\rho)-G^{\prime}_q(\rho)}{\rho}
\]
and $g_q^{\prime}(\rho)\geq 0$ since $g_q(\rho)$ is increasing, we gain the estimate
\begin{equation}\label{eq25sec3.2}
-\di\int^R_{R/4} G_p(\rho) G_q^{\prime \prime}(\rho) d\rho\leq\int^R_{R/4} \frac{G_p(\rho)G_q^{\prime}(\rho)}{\rho} d\rho.
\end{equation}
The function $G_q^{\prime}(\rho)\rho=g_q(\rho)$ is increasing, so
\[
\frac{G_q^{\prime}(\rho)}{\rho}\leq \frac{G_q^{\prime}(R)R}{\rho^2}\leq \frac{16G_q^{\prime}(R)}{R}\text{ for $\rho\in \left[R/4,R\right]$}.
\]
Thus, it follows that
\begin{equation}\label{eq26sec3.2}
\int^R_{R/4} \frac{G_p(\rho)G_q^{\prime}(\rho)}{\rho} d\rho\leq G_p(R)\int^{R}_{ R/4}\frac{G^{\prime}_q(\rho)}{\rho}\;d \rho\leq \frac{12G_p(R)G^{\prime}_q(R)}{R}R\lesssim G_p(R)G^{\prime}_q(R).
\end{equation}
From \eqref{eq25sec3.2} and \eqref{eq26sec3.2}, we obtain 
\begin{equation}\label{eq27sec3.2}
\text { LHS of }\eqref{eq19sec3.2} \lesssim G_p(R) G_q^{\prime}(R).
\end{equation}
Consequently, from \eqref{eq20sec3.2} and \eqref{eq27sec3.2}, it implies
$$
G_p(R) G_q^{\prime}(R) \geq C_2 \tau_1(R)\left(G_q(R)\right)^{p^*+1},
$$
that is,
\begin{equation}\label{eq28sec3.2}
G_p(R) \geq \frac{C_2 \tau_1(R)\left(G_q(R)\right)^{p^*+1}}{G_q^{\prime}(R)} .
\end{equation}
By plugging \eqref{eq28sec3.2} into \eqref{eq18sec3.2}, one gets
$$
G_q^{\prime}(R) \geq \frac{C_3 \tau_2(R)\left(\tau_1(R)\right)^{q^*}\left(G_q(R)\right)^{q^*\left(p^*+1\right)}}{\left(G_q^{\prime}(R)\right)^{q^*}},
$$
which is equivalent to
$$
\begin{aligned}
G_q^{\prime}(R) \geq & C_4\left(\tau_2(R)\right)^{\frac{1}{q^*+1}}\left(\tau_1(R)\right)^{\frac{q^*}{q^*+1}}\left(G_q(R)\right)^{\frac{q^*\left(p^*+1\right)}{q^*+1}} \\
& =\frac{C_4}{R}\left(\mu_1\left(C_0 R^{-\frac{n}{2}+\sigma}\right)\right)^{\frac{q^*}{q^*+1}}\left(\mu_2\left(C_0 R^{-\frac{n}{2}+\sigma}\right)\right)^{\frac{1}{q^*+1}}\left(G_q(R)\right)^{\frac{q^*\left(p^*+1\right)}{q^*+1}},
\end{aligned}
$$
where we have utilized the equality
\[
\tron{\tron{\frac{n}{2}-\sigma}(p^*-1)}\frac{q^*}{q^*+1}+\tron{\tron{\frac{n}{2}-\sigma}(q^*-1)}\frac{1}{q^*+1}=1.
\]
The above estimate implies
\begin{equation}\label{eq29sec3.2}
\frac{C_4}{R}\left(\mu_1\left(C_0 R^{-\frac{n}{2}+\sigma}\right)\right)^{\frac{q^*}{q^*+1}}\left(\mu_2\left(C_0 R^{-\frac{n}{2}+\sigma}\right)\right)^{\frac{1}{q^*+1}} \leq \frac{G_q^{\prime}(R)}{\left(G_q(R)\right)^{\frac{q^*\left(p^*+1\right)}{q^*+1}}} .
\end{equation}
Integrating two sides of \eqref{eq29sec3.2} over $[R_0,R]$ leads to
\[
\begin{aligned}
& C_4 \int_{R_0}^{R} \frac{1}{r}\left(\mu_1\left(C_0 r^{-\frac{n}{2}+\sigma}\right)\right)^{\frac{q^*}{q^*+1}}\left(\mu_2\left(C_0 r^{-\frac{n}{2}+\sigma}\right)\right)^{\frac{1}{q^*+1}} d r \\
& \quad \leq \int_{R_0}^{R} \frac{G_q^{\prime}(r)}{\left(G_q(r)\right)^{\frac{q^*\left(p^*+1\right)}{q^*+1}}} d r =-\left.\frac{q^*+1}{p^* q^*-1}\left(G_q(r)\right)^{-\frac{p^*q^*-1}{q^*+1}}\right|_{r=R_0} ^{r=R} \leq \frac{n-2\sigma}{2}\left(G_q\left(R_0\right)\right)^{-\frac{2}{n-2\sigma}},
\end{aligned}
\]
Let $R\to \infty$, we obtain
\[
C_4 \int_{R_0}^{\infty} \frac{1}{r}\left(\mu_1\left(C_0 r^{-\frac{n}{2}+\sigma}\right)\right)^{\frac{q^*}{q^*+1}}\left(\mu_2\left(C_0 r^{-\frac{n}{2}+\sigma}\right)\right)^{\frac{1}{q^*+1}} d r\leq \frac{n-2\sigma}{2}\left(G_q\left(R_0\right)\right)^{-\frac{2}{n-2\sigma}}.
\]
After using the change of variables $s=C_0 r^{-\frac{n}{2}+\sigma}$ and noticing the condition $n>2\sigma$, we derive
$$
C_4 \int_0^{C_0 R_0^{-\frac{n}{2}+\sigma}}\frac{1}{s}\left(\mu_1(s)\right)^{\frac{q^*}{q^*+1}}\left(\mu_2(s)\right)^{\frac{1}{q^*+1}} d s \leq \frac{n-2\sigma}{2}\left(G_q\left(R_0\right)\right)^{-\frac{2}{n-2\sigma}}.
$$
This contradicts to the assumption \eqref{con9}. Therefore, the proof of Theorem \ref{thr3.1} is completed.
\begin{remark} \label{Remark*}
{\rm
Let us explain the novelty of the proof of Theorem \ref{thr3.1}. When $\sigma=0$, the authors in \cite{AnhRei2021} used the test functions with compact support to establish their result. However, due to the limitations of this technique, it is not straightforward to adapt the previous method to the case, where $\sigma$ is not an integer, since $(-\Delta)^{\sigma}$ is a nonlocal operator. In our approach, the novelty lies in Step 2 and Step 4, where we have succeeded to handle the two non-compact domains $Q^1_R$ and $Q^2_R$. These domains arise from the nonlocal nature of the fractional operator $(-\Delta)^{\sigma}$ in the application of test functions without compact support.
}    
\end{remark}

\section{Sharp lifespan estimates}
As we can see in the previous sections, if the assumption \eqref{con9} holds, then following the same approach as in the proof of Theorem \ref{thr2.1} we claim that there exists a local (in time) solution
$$ (u,v)\in \big(\mathcal{C}([0,T],H^1\cap L^{\infty})\big)^2 $$
to \eqref{eqsys} and this solution blows up in finite time. An interesting question is whether we can successfully obtain sharp lifespan estimates for this solution. To give a positive answer for this question, let us recall function $\psi= \psi(R)$ introduced in \eqref{eq2sec4}. Notice that the assumption \eqref{con9} implies the condition $\psi(\tau)\to \infty$ if $\tau\to \infty$. 

\subsection{Estimates for upper bound}
By the same argument as in the proof of blow-up result, there exists a sufficiently large constant $R_0=R_0(r_0,r_1,n,\sigma,\mu_1,\mu_2)$ independent of $\varepsilon$ such that
\[
\mathcal{D}^{\sigma}(u_0,u_1)\geq \frac{\varepsilon}{2}\int_{\mathbb{R^n}}\big(u_0(x)+u_1(x)\big)dx\text{ and }\mathcal{D}^{\sigma}(v_0,v_1)\geq \frac{\varepsilon}{2}\int_{\mathbb{R^n}}\big(v_0(x)+v_1(x)\big)dx
\]
for all $R\geq R_0$. Without loss of generality, we can assume that $R^{1-\sigma}_0\leq R^{1-\sigma}\leq T_{\varepsilon} $. 
Recalling the estimates \eqref{eq7sec3.2} and \eqref{eq8sec3.2}, we achieve
\begin{equation}
\begin{aligned}
&I_R+\varepsilon\int_{\mathbb{R}^n}\big(u_0(x)+u_1(x)\big)\;dx\\
&\lesssim R^{\frac{n}{2}-\sigma}\Phi_p^{-1}\left(\frac{\dps\int_{Q^1_{R}} \Phi_p(|v(x, t)|)\left(\phi^1_R(x, t)\right)^{p^*+(1-p^*)\delta} d(x, t)+\dps\int_{Q^2_{R}} \Phi_p(|v(x, t)|)\left(\phi^2_R(x, t)\right)^{p^*+(1-p^*)\delta} d(x, t)}{CR^{\frac{n}{2}+1-\sigma}}\right).
\end{aligned}
\end{equation}
and
\begin{equation}
\begin{aligned}
&J_R+\varepsilon \int_{\mathbb{R}^n}\big(v_0(x)+v_1(x)\big)\;dx\\
&\lesssim R^{\frac{n}{2}-\sigma}\Phi_p^{-1}\left(\frac{\dps\int_{Q^1_{R}} \Phi_p(|v(x, t)|)\left(\phi^1_R(x, t)\right)^{p^*+(1-p^*)\delta} d(x, t)+\dps\int_{Q^2_{R}} \Phi_p(|v(x, t)|)\left(\phi^2_R(x, t)\right)^{p^*+(1-p^*)\delta} d(x, t)}{CR^{\frac{n}{2}+1-\sigma}}\right).
\end{aligned}
\end{equation}
Denoting 
\[
\mathcal{D}[u_0,u_1]=\int_{\mathbb{R}^n}\big(u_0(x)+u_1(x)\big)\;dx \quad\text{ and }\quad \mathcal{D}[v_0,v_1]=\int_{\mathbb{R}^n}\big(v_0(x)+v_1(x)\big)\;dx
\]
and using the same notation of $g_q(x)$, $G_q(x)$ as in Step 3 of the proof of Theorem \ref{thr3.1}, one finds
\[
\begin{aligned}
& G_q(R)+\varepsilon \mathcal{D}[u_0,u_1]\lesssim J_R+\varepsilon \mathcal{D}[u_0,u_1] \lesssim  R^{\frac{n}{2}-\sigma} \Phi_p^{-1}\left(\frac{G_p^{\prime}(R)}{CR^{\frac{n}{2}-\sigma}}\right), \\
& G_p(R)+\varepsilon \mathcal{D}[v_0,v_1]\lesssim I_R +\varepsilon \mathcal{D}[v_0,v_1]\lesssim  R^{\frac{n}{2}-\sigma}\Phi_q^{-1}\left(\frac{G_q^{\prime}(R)}{CR^{\frac{n}{2}-\sigma}}\right).
\end{aligned}
\]
Such these estimates lead to
\begin{equation}\label{eq2sec4.1}
    CR^{\frac{n(1-p^*)+ 2\sigma p^*}{1-\sigma}+1}\tron{G_q(R)+\varepsilon \mathcal{D}[u_0,u_1]}^{p^*}\mu_1\left(\frac{G_q(R)+\varepsilon \mathcal{D}[u_0,u_1]}{C_0R^{-\frac{n}{2}+\sigma}}\right) \leq G_p^{\prime}(R)
\end{equation}
and 
\begin{equation}\label{eq3sec4.1}
    CR^{\frac{n(1-q^*)+ 2\sigma q^*}{1-\sigma}+1}\tron{G_p(R)+\varepsilon \mathcal{D}[v_0,v_1]}^{q^*}\mu_2\left(\frac{G_p(R)+\varepsilon \mathcal{D}[v_0,v_1]}{C_0R^{-\frac{n}{2}+\sigma}}\right) \leq G_q^{\prime}(R).
\end{equation}
Since $\mathcal{D}[u_0,u_1]>0$ and $\mathcal{D}[v_0,v_1]>0$, repeating Step 4 in the proof of Theorem \ref{thr3.1} one arrives at
\begin{equation*}
\frac{C_2}{R}\left(\mu_1\left(C_1R^{-\frac{n}{2}+\sigma}\right)\right)^{\frac{q^*}{q^*+1}}\left(\mu_2\left(C_1 R^{-\frac{n}{2}+\sigma}\right)\right)^{\frac{1}{q^*+1}} \leq \frac{G_q^{\prime}(R)}{\left(G_q(R)\right)^{\frac{q^*\left(p^*+1\right)}{q^*+1}}} .
\end{equation*}
Next, we introduce the function
$$ \omega(s):=\psi^{-1}\tron{\frac{\psi(s)}{2}}, $$
where $\psi$ is defined as in \eqref{eq2sec4}. Then, it holds $\omega(s)\to \infty$ if $s\to \infty$ and $R_0\leq \omega(s)\leq s$ for any $s\in [R_0,\infty)$. Furthermore, we have the property
\begin{align}
    \omega(R)=\psi^{-1}\tron{\frac{\psi(R)}{2}} &\Leftrightarrow
    \psi(R)=2\psi\big(\omega(R)\big) \nonumber \\
    &\Leftrightarrow \frac{\psi(R)}{2}=\int_{R_0}^{\omega(R)}\frac{1}{r}\left(\mu_1\left(C_1 r^{-\frac{n}{2}+\sigma}\right)\right)^{\frac{p^*}{p^*+1}}\left(\mu_2\left(C_1 r^{-\frac{n}{2}+\sigma}\right)\right)^{\frac{1}{p^*+1}}\;dr \nonumber \\
    &\qquad\qquad =\int_{\omega(R)}^{R}\frac{1}{r}\left(\mu_1\left(C_1 r^{-\frac{n}{2}+\sigma}\right)\right)^{\frac{p^*}{p^*+1}}\left(\mu_2\left(C_1 r^{-\frac{n}{2}+\sigma}\right)\right)^{\frac{1}{p^*+1}}\;dr. \label{eq4sec4.1}
\end{align}
Integrating two side of \eqref{eq3sec4.1} over $\left[\omega(R),R\right]$ gives
\begin{align}
& C_2 \int_{\omega(R)}^{R} \frac{1}{r}\left(\mu_1\left(C_1 r^{-\frac{n}{2}+\sigma}\right)\right)^{\frac{q^*}{q^*+1}}\left(\mu_2\left(C_1 r^{-\frac{n}{2}+\sigma}\right)\right)^{\frac{1}{q^*+1}} d r \nonumber \\
& \quad \leq \int_{\omega(R)}^{R} \frac{G_q^{\prime}(r)}{\left(G_q(r)\right)^{\frac{q^*\left(p^*+1\right)}{q^*+1}}} d r =-\left.\frac{q^*+1}{p^* q^*-1}\left(G_q(r)\right)^{-\frac{p^*q^*-1}{q^*+1}}\right|_{r=\omega(R)} ^{r=R} \leq \frac{n-2\sigma}{2}\left(G_q\left(\omega(R)\right)\right)^{-\frac{2}{n-2\sigma}}. \label{eq5sec4.1}
\end{align}
\begin{itemize}
    \item \textbf{Case 1:} If $p^*<q^*$, since
    $$\frac{n(1-q^*)+ 2\sigma q^*}{1-\sigma}+1<-1$$
and $G_p(r),\mu_2(r)$ are increasing functions, integrating two sides of \eqref{eq3sec4.1} over $[R_0,\omega(R)]$ we may claim that
\begin{align}
G_q\big(\omega(R)\big)&\geq \varepsilon^{q^*}C(\mathcal{D}[v_0,v_1])^{q^*}\int_{R_0}^{\omega(R)} r^{\frac{n(1-q^*)+ 2\sigma q^*}{1-\sigma}+1}\mu_2\left(\frac{G_p(r)+\varepsilon \mathcal{D}[v_0,v_1]}{C_0 r^{-\frac{n}{2}+\sigma}}\right)dr \nonumber\\
&\geq \varepsilon^{q^*}C(\mathcal{D}[v_0,v_1])^{q^*}\mu_2\left(\frac{G_p(R_0)+\varepsilon \mathcal{D}[v_0,v_1]}{C_0 {R_0}^{-\frac{n}{2}+\sigma}}\right)\int_{R_0}^{\omega(R)} r^{\frac{n(1-q^*)+ 2\sigma q^*}{1-\sigma}+1}dr \nonumber\\
&\geq \varepsilon^{q^*}C_3\big(\mathcal{D}[v_0,v_1]\big)^{q^*}. \label{eq6sec4.1}
\end{align}
After plugging \eqref{eq6sec4.1} into \eqref{eq5sec4.1}, one realizes
\begin{equation}\label{eq7sec4.1}
\int_{\omega(R)}^{R} \frac{1}{r}\left(\mu_1\left(C_1 r^{-\frac{n}{2}+\sigma}\right)\right)^{\frac{q^*}{q^*+1}}\left(\mu_2\left(C_1 r^{-\frac{n}{2}+\sigma}\right)\right)^{\frac{1}{q^*+1}} d r\leq C_4 \varepsilon^{\frac{q^*(p^*q^*-1)}{q^*+1}}.
\end{equation}
Recalling the definition of $\psi$ we show that
\[
\psi(R)=2\int_{\omega(R)}^{R} \frac{1}{r}\left(\mu_1\left(C_1 r^{-\frac{n}{2}+\sigma}\right)\right)^{\frac{q^*}{q^*+1}}\left(\mu_2\left(C_1 r^{-\frac{n}{2}+\sigma}\right)\right)^{\frac{1}{q^*+1}}\;dr\leq 2C_4\varepsilon^{\frac{q^*(p^*q^*-1)}{q^*+1}}
\]
from \eqref{eq4sec4.1} and \eqref{eq7sec4.1}. Taking the function $\psi^{-1}$ in both sides of the previous estimate and then passing $R\to T_{\varepsilon}^{\frac{1}{1-\sigma}}$, we conclude
\[
T_{\varepsilon}\lesssim \tron{\psi^{-1}\tron{C\varepsilon^{\frac{q^*(p^*q^*-1)}{q^*+1}}}}^{1-\sigma}.
\]
    \item \textbf{Case 2:} If $p^*=q^*=1+2/(n-2\sigma)$, from \eqref{eq2sec4.1} and \eqref{eq3sec4.1} one sees
\[
\varepsilon^{p^*}\left(\mathcal{D}[u_0,u_1]\right)^{\frac{(p^*)^2}{p^*+1}}\left(\mathcal{D}[v_0,v_1]\right)^{\frac{p^*}{p^*+1}}R^{-1}\tron{\mu_1\tron{C_1R^{-\frac{n}{2}+\sigma}}}^{\frac{p^*}{p^*+1}}\tron{\mu_2\tron{C_1R^{-\frac{n}{2}+\sigma}}}^{\frac{1}{p^*+1}}\;ds\leq G^{\prime}_{p}(R).
\]
Hence, integrating two sides of the above inequality over $[R_0,\omega(R)]$ we obtain
\begin{equation}\label{eq8sec4.1}
\begin{aligned}
G_{p}(\omega(R))-G_{p}(R_0)&\geq C_5\varepsilon^{p^*}\int_{R_0}^{\omega(R)}\frac{1}{r}\left(\mu_1\left(C_1 r^{-\frac{n}{2}+\sigma}\right)\right)^{\frac{p^*}{p^*+1}}\left(\mu_2\left(C_1 r^{-\frac{n}{2}+\sigma}\right)\right)^{\frac{1}{p^*+1}}\;dr\\
&\geq C_5\varepsilon^{p^*}\psi\big(\omega(R)\big).
\end{aligned}
\end{equation}
From \eqref{eq5sec4.1} and \eqref{eq8sec4.1}, one entails
$$ \int_{\omega(R)}^{R}\frac{1}{r}\left(\mu_1\left(C_1 r^{-\frac{n}{2}+\sigma}\right)\right)^{\frac{p^*}{p^*+1}}\left(\mu_2\left(C_1r^{-\frac{n}{2}+\sigma}\right)\right)^{\frac{1}{p^*+1}}\;dr\leq C_6\e^{-\frac{2p^*}{n-2\sigma}}\big(\psi(\omega(R))\big)^{-\frac{2}{n-2\sigma}}. $$
The combination of this with \eqref{eq4sec4.1} gives
\begin{equation*}
\psi(R)\leq C_7\varepsilon^{-\frac{2p^*}{n-2\sigma}}\big(\psi(R)\big)^{-\frac{2}{n-2\sigma}} \Leftrightarrow \psi(R) \leq C_8\varepsilon^{-p^*+1}.
\end{equation*}
Finally, letting $R\to T_{\varepsilon}^{\frac{1}{1-\sigma}}$ we achieve
\[
T_{\varepsilon}\lesssim \tron{\psi^{-1}\tron{C\varepsilon^{-(p^*-1)}}}^{1-\sigma}.
\]
\end{itemize}

\subsection{Estimates for lower bound}
To prove the lower bound, we utilize some estimates in Proposition \ref{prop2.1}. The proof consists of two main steps. Firstly, we point out that the lifespan of solutions tends to infinity if the parameter $\varepsilon$ tends to zero. Then, we can suppose that the lifespan is greater than a sufficiently large constant for small $\varepsilon$. This assumption allows us to optimize the function $\gamma(t)$, defined in \eqref{con5thr2.1}, and finally leads to the lower bound of lifespan.  
\begin{proof} 
We define the following function:
\begin{equation} \label{Psi_function}
\Psi(t):=\int_{0}^{t}(1+\tau)^{-1}\tron{\mu_1\tron{c(1+\tau)^{-\ell}}}^{\frac{q^*}{q^*+1}}\tron{\mu_2\tron{c(1+\tau)^{-\ell}}}^{\frac{1}{q^*+1}}\;d\tau,
\end{equation}
where the parameter $\ell$ comes from the definition of $\gamma(t)$ in \eqref{con5thr2.1}. To prove the lower bounded of lifespan, we introduce the evolution spaces as follows:
\[
X_j(T)= \mathcal{C}\big([0,T],H^1\cap L^{\infty}\big) \text{ with $j=1,2$},
\]
endowed with their corresponding norms
\begin{align*}
    \|u\|_{X_1(T)} &:=\sup _{t \in[0, T]}\left((1+t)^{-s(p^*, q^*)}\gamma(t)^{-1}\big(\Psi(t)\big)^{\beta(p^*,q^*)} \mathcal{N}[u](t)\right), \\
\|v\|_{X_2(T)} &:=\sup _{t \in[0, T]}\Big(\mathcal{N}[v](t)\Big),
\end{align*}
where we define
$$ \beta(p^*,q^*):=\frac{(n-2\sigma)(p^*-1)}{2q^*}>0 $$
and 
$$
\mathcal{N}[w](t):=(1+t)^{\frac{n}{4(1-\sigma)}}\|w(t, \cdot)\|_{L^2}+(1+t)^{\frac{n}{4(1-\sigma)}-\frac{1-2\sigma}{2(1-\sigma)}}\|\nabla w(t, \cdot)\|_{L^2}+(1+t)^{\frac{n}{2(1-\sigma)}-\frac{\sigma}{1-\sigma}}\|w\|_{L^{\infty}}
$$
with $w=u$ or $w=v$. Then, we can introduce the solution space $X(T)$ of \eqref{eqsys} by
$$
X(T)= X_1(T) \times X_2(T)
$$
carrying the norm
$$
\|(u, v)\|_{X(T)}:=\|u\|_{X_1(T)}+\|v\|_{X_2(T)} .
$$
\textbf{$\bullet$ Case 1:} We consider the case $p^*<q^*$, hence from \eqref{con4thr2.1} we have $s(p^*,q^*)>0$. From the fact $\Psi(t)\leq C\ln(1+t)$ and by Remark 2.1 in \cite{AnhRei2021}, we deduce that
\begin{equation}\label{eq0sec4.2}
(1+t)^{-s(p^*, q^*)}\big(\gamma(t)\big)^{-1}\big(\Psi(t)\big)^{\beta(p^*,q^*)}\lesssim 1.
\end{equation}
Using Proposition \ref{prop2.1} and the above estimate, we have proved that
\[
\|(u^{\mathrm{ln}},v^{\mathrm{ln}})\|_{X(t)}\leq \varepsilon c_0(n,\mathcal{D}[u_0,u_1,v_0,v_1]).
\]
Following several steps in the proof of Theorem 1.1 in \cite{AnhRei2021} with small modifications we derive
\begin{align}
\|\nabla^{k}u^{\mathrm{nl}}(t,\cdot)\|_{L^2}&\lesssim(1+t)^{-\frac{n}{4(1-\sigma)}-\frac{k-2\sigma}{2(1-\sigma)}}\gamma(t)\|v\|_{X_2(T)}^{p^*} \nonumber \\
&\qquad \times \int_0^t(1+\tau)^{-1+s(p^*,q^*)}\left(\mu_1\left(C_1(1+\tau)^{-\ell}\right)\right)^{\frac{q^*}{q^*+1}}\left(\mu_2\left(C_1(1+\tau)^{-\ell}\right)\right)^{\frac{1}{q^*+1}}d\tau \nonumber \\
&\lesssim (1+t)^{-\frac{n}{4(1-\sigma)}-\frac{k-2\sigma}{2(1-\sigma)}}\|v\|^{p^*}_{X_2(T)}, \label{eq1sec4.2}, \\
\|u^{\mathrm{nl}}(t,\cdot)\|_{L^{\infty}}&\lesssim(1+t)^{-\frac{n}{2(1-\sigma)}+\frac{\sigma}{1-\sigma}}\gamma(t)\|v\|_{X_2(T)}^{p^*}\nonumber \\
&\qquad \times \int_0^t(1+\tau)^{-1+s(p^*,q^*)}\left(\mu_1\left(C_1(1+\tau)^{-\ell}\right)\right)^{\frac{q^*}{q^*+1}}\left(\mu_2\left(C_1(1+\tau)^{-\ell}\right)\right)^{\frac{1}{q^*+1}}d\tau\nonumber \\
&\lesssim (1+t)^{-\frac{n}{2(1-\sigma)}+\frac{\sigma}{1-\sigma}}\|v\|^{p^*}_{X_2(T)}, \label{eq2sec4.2}
\end{align}
and
\begin{align}
    \|\nabla^{k}v^{\mathrm{nl}}(t,\cdot)\|_{L^2}&\lesssim (1+t)^{-\frac{n}{4(1-\sigma)}-\frac{k-2\sigma}{2(1-\sigma)}}\|u\|_{X_1(T)}^{q^*}\nonumber \\
    &\qquad \times \int_0^t(1+\tau)^{-1}\left(\mu_1\left(C_1(1+\tau)^{-\ell}\right)\right)^{\frac{q^*}{q^*+1}}\left(\mu_2\left(C_1(1+\tau)^{-\ell}\right)\right)^{\frac{1}{q^*+1}}\big(\Psi(t)\big)^{-\beta(p^*,q^*)q^*}d\tau\nonumber \\
    &\lesssim (1+t)^{-\frac{n}{4(1-\sigma)}-\frac{k-2\sigma}{2(1-\sigma)}}\|u\|_{X_1(T)}^{q^*} \int_{0}^{t}\big(\Psi(t)\big)^{-\beta(p^*,q^*)q^*}\;d\big(\Psi(\tau)\big) \nonumber \\
    &\lesssim (1+t)^{-\frac{n}{4(1-\sigma)}-\frac{k-2\sigma}{2(1-\sigma)}}\big(\Psi(t)\big)^{1-\beta(p^*,q^*)q^*}\|u\|_{X_1(T)}^{q^*}. \label{eq3sec4.2} \\
    \|v^{\mathrm{nl}}(t,\cdot)\|_{L^\infty}&\lesssim (1+t)^{-\frac{n}{2(1-\sigma)}+\frac{\sigma}{1-\sigma}}\|u\|_{X_1(T)}^{q^*}\nonumber \\
    &\qquad \times \int_0^t(1+\tau)^{-1}\left(\mu_1\left(C_1(1+\tau)^{-\ell}\right)\right)^{\frac{q^*}{q^*+1}}\left(\mu_2\left(C_1(1+\tau)^{-\ell}\right)\right)^{\frac{1}{q^*+1}}\big(\Psi(t)\big)^{-\beta(p^*,q^*)q^*}d\tau\nonumber \\
    &\lesssim (1+t)^{-\frac{n}{2(1-\sigma)}+\frac{\sigma}{1-\sigma}}\big(\Psi(t)\big)^{1-\beta(p^*,q^*)q^*}\|u\|_{X_1(T)}^{q^*}. \label{eq4sec4.2}
\end{align}
Combining all estimates \eqref{eq1sec4.2}-\eqref{eq4sec4.2} leads to
\[
\begin{aligned}
(1+t)^{\frac{n}{4(1-\sigma)}+\frac{k-2\sigma}{2(1-\sigma)}-s(p^*,q^*)}\gamma(t)^{-1}\big(\Psi(t)\big)^{\beta(p^*,q^*)}\left\|\nabla^ku^{\mathrm{nl}}(t, \cdot)\right\|_{L^2}&\leq c^u_1\big(\Psi(t)\big)^{\beta(p^*,q^*)}\|v\|_{X_2(T)}^{p^*},\\
(1+t)^{\frac{n}{2(1-\sigma)}-\frac{\sigma}{1-\sigma}-s(p^*,q^*)}\gamma(t)^{-1}\big(\Psi(t)\big)^{\beta(p^*,q^*)}\|u^{\mathrm{nl}}(t,\cdot)\|_{L^{\infty}}&\leq c^{u}_2\big(\Psi(t)\big)^{\beta(p^*,q^*)}\|v\|^{p^*}_{X_2(T)},\\
(1+t)^{\frac{n}{4(1-\sigma)}+\frac{k-2\sigma}{2(1-\sigma)}}\left\|\nabla^kv^{\mathrm{nl}}(t, \cdot)\right\|_{L^2}&\leq c^v_1\big(\Psi(t)\big)^{1-\beta(p^*,q^*)q^*}\|u\|_{X_1(T)}^{q^*},\\
(1+t)^{\frac{n}{2(1-\sigma)}-\frac{\sigma}{1-\sigma}}\|v^{nl}(t,\cdot)\|_{L^{\infty}}&\leq c^{v}_2\big(\Psi(t)\big)^{1-\beta(p^*,q^*)q^*}\|u\|^{q^*}_{X_1(T)},
\end{aligned}
\]
where $c^u_1,c^u_2,c^v_1,c^v_2$ are some positive constants independent of $T$. From the above estimates and the definition of norm in $Y_1(T),Y_2(T)$, one arrives at
\begin{equation}\label{eq5sec4.2}
    \|u\|_{X_1(T)}\leq \varepsilon c_0+c^u\big(\Psi(T)\big)^{\beta(p^*,q^*)}\|v\|_{X_2(T)}^{p^*},
\end{equation}    
\begin{equation}\label{eq6sec4.2}
    \|v\|_{X_2(T)}\leq \varepsilon c_0+c^v\big(\Psi(T)\big)^{1-\beta(p^*,q^*)q^*}\|u\|_{X_1(T)}^{q^*},
\end{equation}
where $c^u$ and $c^v$ are some positive constants depending on $c_1^u,c_2^u$ and $c_1^v,c_2^v$ respectively. Afterwards, motivated by the approach in \cite{IkeOga2016}, we determine
$$ T^*:=\sup \left\{T \in\left[0, T_{\varepsilon}\right) \text { such that } F(T):=\|(u, v)\|_{X(T)} \leq M \varepsilon\right\} $$
with a sufficiently large constant $M>0$, which will be defined in next steps. Thanks to the fact $\|u\|_{X_1\left(T^*\right)} \leq\|(u, v)\|_{X\left(T^*\right)} \leq M \varepsilon$, it holds from \eqref{eq2sec4.2} that
\begin{equation}\label{eq8sec4.2}
\|v\|_{X_2\left(T^*\right)} \leq \varepsilon c_0+c^v M^{q^*}\big(\Psi(T^*)\big)^{1-\beta(p^*,q^*)q^*} \varepsilon^{q^*},
\end{equation}
provided that $1-\beta(p^*,q^*)q^*>0$. Then, substituting \eqref{eq8sec4.2} into \eqref{eq5sec4.2} one sees
$$
\begin{aligned}
\|u\|_{X_1\left(T^*\right)} & \leq \varepsilon c_0+\big(\Psi(T^*)\big)^{\beta(p^*,q^*)}\left(c_2 \varepsilon^{p^*}+c_3 M^{p^*q^*}\big(\Psi(T^*)\big)^{p^*(1-\beta(p^*,q^*)q^*)} \varepsilon^{p^*q^*}\right) \\
& \leq \varepsilon\left(c_0+c_2\big(\Psi(T^*)\big)^{\beta(p^*,q^*)} \varepsilon^{p^*-1}+c_3 M^{p^*q^*}\big(\Psi(T^*)\big)^{p^*-\beta(p^*,q^*)(p^*q^*-1)} \varepsilon^{p^*q^*-1}\right)
\end{aligned}
$$
with two positive constants $c_2=c_2\left(c_0,c^u, p^*\right)$ and $c_3=c_3\left(c_0,c^u, c^v, p^*\right)$. We can take a large constant $M>0$ such that $0<c_0<M /8$ to fulfill
$$
\|u\|_{X_1\left(T^*\right)}<\frac{3}{8} M \varepsilon
$$
as long as
$$
8 c_2 M^{-1}\big(\Psi(T^*)\big)^{\beta(p^*,q^*)}\varepsilon^{p^*-1}<1 $$
and
$$ 8 c_3 M^{p^*q^*-1}\big(\Psi(T^*)\big)^{p^*-\beta(p^*,q^*)(p^*q^*-1)} \varepsilon^{p^*q^*-1}<1
$$
hold. Moreover, it follows from \eqref{eq6sec4.2} that
$$
\|v\|_{X_2\left(T^*\right)} \leq \varepsilon\left(c_0+c^v M^{q^*}\big(\Psi(T^*)\big)^{1-\beta(p^*,q^*)q^*}\varepsilon^{q^*-1}\right)<\frac{1}{4} M \varepsilon
$$
when we assume
$$
8 c^v M^{q^*-1}\big(\Psi(T^*)\big)^{1-\beta(p^*,q^*)q^*} \varepsilon^{q^*-1}<1 .
$$
Collecting the above two estimates, we know that
\begin{equation}\label{eq9sec4.2}
F\left(T^*\right)=\|(u, v)\|_{X\left(T^*\right)}=\|u\|_{X_1\left(T^*\right)}+\|v\|_{X_2\left(T^*\right)}<\frac{5}{8} M \varepsilon<M \varepsilon .
\end{equation}
Note that $F=F(T)$ is a continuous function for any $T \in\left(0, T_{\varepsilon}\right)$. From \eqref{eq9sec4.2}, it implies that there exists a time $T_0 \in\left(T^*, T_{\varepsilon}\right)$ satisfying $F\left(T_0\right) \leq M \varepsilon$, which gives a contradiction to the definition of $T^*$. For this reason, we may claim that one of the following estimates are true:
$$
\begin{aligned}
& 8 c_2 M^{-1}\big(\Psi(T^*)\big)^{\beta(p^*,q^*)} \varepsilon^{p^*-1} \geq 1, \\
& 8 c_3 M^{p^*q^*-1}\big(\Psi(T^*)\big)^{p^*-\beta(p^*,q^*)(p^*q^*-1)} \varepsilon^{p^*q^*-1} \geq 1, \\
& 8 c^v M^{q^*-1}\big(\Psi(T^*)\big)^{1-\beta(p^*,q^*)q^*} \varepsilon^{q^*-1} \geq 1 .
\end{aligned}
$$
Hence, one catches
\[
\Psi(T^*)\geq c\varepsilon^{-\min \left\{\frac{p^*-1}{\beta(p^*,q^*)}, \frac{p^*q^*-1}{p^*-\beta(p^*,q^*)(p^*q^*-1)}, \frac{q^*-1}{1-\beta(p^*,q^*)q^*}\right\}}\geq c\varepsilon^{-\frac{q^*(p^*q^*-1)}{q^*+1}},
\]
from which we can find the blow-up time
\begin{equation}\label{eq10sec4.2}
T_{\varepsilon} \geq \Psi^{-1}\left(c \varepsilon^{-\frac{q^*(p^*q^*-1)}{q^*+1}}\right),
\end{equation}
where $c$ is a positive constant independent of $\varepsilon$. \medskip

\noindent \textbf{$\bullet$ Case 2:} The special case $p^*=q^*$ is simple in comparison with Case 1. More precisely, by setting $s(p^*, q^*)=0$ and $\beta(p^*,q^*)=0$ one concludes from \eqref{eq2sec4.2} that
$$
\|v\|_{X_2\left(T^*\right)} \leq \varepsilon\left(c_0+c_1^v M^{p^*} \Psi \left(T^*\right) \varepsilon^{p^*-1}\right) .
$$
By analogous arguments used in \cite{IkeOga2016}, we obtain
\begin{equation}\label{eq11sec4.2}
T_{\varepsilon} \geq \Psi^{-1}\left(c \varepsilon^{-(p^*-1)}\right) .
\end{equation}
At the final step, notice that the function $\Psi(s)$ in \eqref{Psi_function} does not coincide with the function $\psi(s)$ in \eqref{eq2sec4}. For this reason, we need one more step to verify the sharp estimates. To get started, we choose a ``modified'' version of the function $\gamma(t)$, defined in \eqref{con5thr2.1} as follows:
\begin{equation*}\label{eq12sec4.2}
\tilde{\gamma}(t)=\mu_1\tron{C_1(1+t)^{-\frac{1+q^*}{(1-\sigma)(p^*q^*-1)}}}.  
\end{equation*}
We will prove that we can choose $R_1$ large enough such that for $t\geq R_1$, the above function satisfies
\begin{equation}\label{eq13sec4.2}
    \gamma(t)^{-1}\mu_1\tron{C_1(1+t)^{-\frac{1+q^*}{(1-\sigma)(p^*q^*-1)}}}\lesssim C
\end{equation}
and
\begin{equation}\label{eq14sec4.2}
    \gamma(t)^{q^*}\mu_2\tron{C_1(1+t)^{-\frac{1+p^*}{(1-\sigma)(p^*q^*-1)}}\gamma(t)} \lesssim \tron{\mu_1\tron{C_1(1+t)^{-\frac{1+q^*}{(1-\sigma)(p^*q^*-1)}}}}^{\frac{q^*}{q^*+1}}\tron{\mu_2\tron{C_1(1+t)^{-\frac{1+q^*}{(1-\sigma)(p^*q^*-1)}}}}^{\frac{1}{q^*+1}}.
\end{equation}
Indeed, the estimate \eqref{eq13sec4.2} is obvious, so we only verify \eqref{eq14sec4.2}. By the change of variable $s= C_1(1+t)^{-\frac{1+q^*}{(1-\sigma)(p^*q^*-1)}}$, we show that the function
\[
h(s):=\frac{\tron{\mu_1(s)}^{\frac{(q^*)^2}{q^*+1}}\mu_2\tron{C_2s^{\frac{1+p^*}{1+q^*}}\mu_1(s)}}{\tron{\mu_2(s)}^{\frac{1}{q^*+1}}}
\]
is increasing. To be specific, a simple calculation gives
\[
h^{\prime}(s)\geq h(s)\tron{\frac{1+p^*}{1+q^*}\frac{\mu^{\prime}_2\tron{C_2s^{\frac{1+p^*}{1+q^*}}\mu_1(s)}}{\mu_2\tron{C_2s^{\frac{1+p^*}{1+q^*}}\mu_1(s)}}C_2s^{\frac{p^*-q^*}{1+q^*}}\mu_1(s)-\frac{1}{q^*+1}\frac{\mu^{\prime}_2(s)}{\mu_2(s)}}.
\]
Hence, it is sufficent to show that
\begin{equation}\label{eq15sec4.2}
    \frac{\mu^{\prime}_2\tron{C_2s^{\frac{1+p^*}{1+q^*}}\mu_1(s)}}{\mu_2\tron{C_2s^{\frac{1+p^*}{1+q^*}}\mu_1(s)}}C_2s^{\frac{1+p^*}{1+q^*}}\mu_1(s)\geq \frac{\mu^{\prime}_2(s)s}{\mu_2(s)}\text{ for $s\in (0,s_0]$},
\end{equation}
where $s_0$ is a sufficient small constant. To claim \eqref{eq15sec4.2}, we have the following two properties:
\begin{itemize}
    \item $\di\frac{\mu^{\prime}_2(s)s}{\mu_2(s)}$ is non-decreasing on $(0,s_0]$,
    \item $\di \mu_1(s)\geq C^{-1}_2s^{\frac{q^*-p^*}{1+q^*}}$, for $s\in (0,s_0]$.
\end{itemize}
The first one is deduced from the assumption \eqref{con1thr4.1}. Namely, putting 
\[
\omega_1(s):=\di\frac{\mu^{\prime}_2(s)s}{\mu_2(s)},
\]
and using \eqref{con1thr4.1}, one realizes
\[
\omega'_1(s)=\frac{\mu^{\prime}_2(s)}{\mu_2(s)}\tron{1+\frac{\mu^{\prime\prime}_2(s)s}{\mu^{\prime}_2(s)}-\frac{\mu^{\prime}_2(s)s}{\mu_2(s)}}\geq 0
\]
for $s\in (0,s_0]$ if we choose $s_0$ small enough. This is to achieve the first property. In the same way, to prove the second one we denote
\[
\omega_2(s):=\frac{\mu_1(s)}{s^{\frac{q^*-p^*}{1+q^*}}}
\]
and get
\[
\omega'_2(s)=\frac{\mu^{\prime}_1(s)s^{\frac{q^*-p^*}{1+q^*}}-\frac{q^*-p^*}{1+q^*}s^{\frac{q^*-p^*}{1+q^*}-1}}{s^{\frac{2(q^*-p^*)}{1+q^*}}}=\frac{\omega_2(s)}{s}\tron{\frac{\mu^{\prime}_1(s)s}{\mu_1(s)}-\frac{q^*-p^*}{1+q^*}}.
\]
By \eqref{con1thr4.1} again, it implies that $\omega_2(s)$ is decreasing for small enough constant $s_0$, which leads to 
\[
\mu_1(s)\geq C^{-1}_2s^{\frac{q^*-p^*}{1+q^*}},
\] 
where $C^{-1}_2=C_1^{\frac{p^*-q^*}{1+q^*}}$ and $C_1$ can be chosen as large as we want. Therefore, \eqref{eq15sec4.2} is true and we conclude that $h(s)$ is increasing to obtain \eqref{eq14sec4.2}. Finally, \eqref{eq13sec4.2} and \eqref{eq14sec4.2} allow us to replace the function $\gamma(t)$ by $\tilde{\gamma}(t)$. In this way, we derive a new form of $\Psi(t)$ as
\[
\Psi(t)=\int_{0}^{t}(1+t)^{-1}\tron{\mu_1\tron{C_1(1+t)^{-\frac{n-2\sigma}{2(1-\sigma)}}}}^{\frac{q^*}{1+q^*}}\tron{\mu_2\tron{C_1(1+t)^{-\frac{n-2\sigma}{2(1-\sigma)}}}}^{\frac{1}{q^*+1}}\;dt.
\]
To end this problem, carrying out the change of variable $C_0s^{-\frac{n}{2}+\sigma}=C_1(1+\tau)^{-\frac{n-2\sigma}{2(1-\sigma)}}$ one achieves
\[
\Psi(s)=\psi\big(C_4s^{(1-\sigma)^{-1}}-1\big)+C_5,
\]
where $C_4=\tron{\frac{C_0}{C_1}}^{(1-\sigma)^{-1}}$. Then, for $\varepsilon \in (0,\varepsilon_0]$ we obtain the lower bound
\[
C_4 T_{\varepsilon}^{(1-\sigma)^{-1}}-1\geq
\begin{cases}
    \psi^{-1}\tron{c\varepsilon^{-\frac{q^*(p^*q^*-1)}{q^*+1}}} &\text{ if $p^*<q^*$},\\
    \psi^{-1}\tron{c\varepsilon^{-p^*+1}} &\text{ if $p^*=q^*$},
\end{cases}
\]
which is equivalent to 
\[
T_{\varepsilon} \gtrsim \begin{cases}
    \tron{\psi^{-1}\tron{c\varepsilon^{-\frac{q^*(p^*q^*-1)}{q^*+1}}}}^{1-\sigma} &\text{ if $p^*<q^*$},\\
    \tron{\psi^{-1}\tron{c\varepsilon^{-p^*+1}}}^{1-\sigma} &\text{ if $p^*=q^*$}.
\end{cases}
\]
Summarizing, our proof is completed.
\end{proof}

\section{Conclusion and Open problems}
Throughout this paper, we investigate the sharp condition of modulus of continuity that guarantees the global existence of Sobolev solutions. Furthermore, when the solution exists locally and blows up in finite time, we achieve the sharp lifespan estimates for solutions. In future work, we are interested in studying the following weakly coupled system of semilinear structurally damped $\sigma$-evolution equations with critical nonlinearities:
\begin{equation*}
\begin{cases}
    u_{tt}(t,x)+(-\Delta)^{\sigma} u(t,x)+(-\Delta)^{\delta}u_t(t,x)=|v(t,x)|^{p^*}\mu_1(|v(t,x)|),\quad &x\in \mathbb{R}^n,\, t\geq 0,\\
    v_{tt}(t,x)+(-\Delta)^{\sigma} v(t,x)+(-\Delta)^{\delta}v_t(t,x)=|u(t,x)|^{q^*}\mu_2(|u(t,x)|),\quad &x\in \mathbb{R}^n,\, t\geq 0,\\
    u(0,x)= u_0(x),\quad u_t(0,x)= u_1(x),\quad &x\in \mathbb{R}^n,\\
    v(0,x)= v_0(x),\quad v_t(0,x)= v_1(x),\quad &x\in \mathbb{R}^n,
\end{cases}
\end{equation*}
where $\sigma>1$ is any fractional number and $\delta\in [0,\sigma]$. The terms $(-\Delta)^{\sigma}u$ and $(-\Delta)^{\sigma}v$ are quite difficult to handle by our method used in this work. Indeed, when applying the fractional Laplacian $(-\Delta)^{\sigma}$ to the test function, a new domain $$ Q_3:=\{(x,t):|x|\leq R^{\alpha}\text{ and }t\leq R^{\beta}\}$$
arises, where $\alpha=1/(2\sigma)$ and $\beta=(\sigma-\delta)/\sigma$. This domain contains the origin, which leads to a singularity in the integral. To the best of our knowledge, it still remains an open question so far. In a forthcoming paper, we will partially give a positive answer to this problem.

\section*{Acknowledgments}
This research of Tuan Anh Dao is funded by Vietnam National Foundation for Science and Technology Development (NAFOSTED) under grant number 101.02-2025.23. The research of The Anh Cung is funded by Vietnam National Foundation for Science and Technology Development (NAFOSTED) under grant number 101.02-2023.29. This work was completed while the first and third authors were visiting the Vietnam Institute for  Advanced Study in Mathematics (VIASM). They would like to thank VIASM and Hanoi National University of Education for providing a fruitful working environment. \medskip

\textbf{Conflict of interest:} The authors declare no potential conflict of interests.\medskip

\textbf{Data Availability Statement:} My manuscript has no associated data. No new data was created during the study.


\appendix
\begin{lemma}[A generalized Jensen's inequality, see \cite{Pavic2016}] \label{lem2}
Let $\gamma=\gamma(x)$ be a defined and nonnegative function almost everywhere on $\Omega$, provided that $\gamma$ is positive in a set of positive measure. Then, for each convex function $h$ on $\mathbb{R}$ the following inequality holds:
$$
h\left(\dfrac{\displaystyle\int_{\Omega} f(x) \gamma(x) d x}{\displaystyle\int_{\Omega} \gamma(x) d x}\right) \leq \dfrac{\displaystyle\int_{\Omega} h(f(x))\gamma(x) d x}{\displaystyle\int_{\Omega} \gamma(x) d x},
$$
where $f$ is any nonnegative function such that all the above integrals are meaningful.
\end{lemma}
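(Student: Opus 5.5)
The statement is the generalized Jensen inequality in the appendix (Lemma \ref{lem2}). I would prove it by the supporting-line characterization of convexity. Set
\[
m:=\dfrac{\int_{\Omega} f(x)\gamma(x)\,dx}{\int_{\Omega}\gamma(x)\,dx}.
\]
This is well defined because $\gamma\ge 0$ and $\gamma>0$ on a set of positive measure, so $\int_\Omega\gamma\,dx>0$. By assumption the integrals are finite, and since $f\ge 0$ we have $m\in[0,\infty)$.

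The first step uses convexity of $h$ on $\mathbb{R}$. A convex function on an open interval has finite one-sided derivatives at every interior point. So I pick a subgradient $k\in[h'_-(m),h'_+(m)]$, which gives the supporting line
\[
h(y)\ge h(m)+k(y-m)\qquad\text{for all } y\in\mathbb{R}.
\]
The second step substitutes $y=f(x)$, multiplies by $\gamma(x)\ge 0$ (the direction of the inequality is preserved), and integrates over $\Omega$:
\[
\int_\Omega h(f)\gamma\,dx\ \ge\ h(m)\int_\Omega\gamma\,dx+k\Big(\int_\Omega f\gamma\,dx-m\int_\Omega\gamma\,dx\Big).
\]
By the definition of $m$ the bracket vanishes. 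Dividing by $\int_\Omega\gamma\,dx>0$ then gives the claim.

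The only delicate point is integrability. The right-hand side of the pointwise inequality is an affine function of $f$ times $\gamma$, and it is integrable because $f\gamma$ and $\gamma$ are. So $h(f)\gamma$ is bounded below by an integrable function, and its integral is well defined in $(-\infty,+\infty]$. If that integral is $+\infty$ the inequality is trivial; otherwise the computation above applies. This is covered by the hypothesis ``all the above integrals are meaningful''.

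In the paper the lemma is applied with $h=\Phi_q$. That function is convex only on $(0,c_0]$ a priori, but it is extended convexly to $[0,\infty)$, and it can be extended to $\mathbb{R}$, for example by making it affine or constant for negative arguments. Since $f\ge 0$, only the values on $[0,\infty)$ matter. If $m=0$ lies at the boundary of the convexity domain, I would still take a finite supporting slope, namely the right derivative at $0$, which is finite for the extension to $\mathbb{R}$. I expect no step to be a real obstacle; the only care needed is this boundary and integrability bookkeeping.
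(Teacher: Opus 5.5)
Your proof is correct. The paper does not prove this lemma; it quotes it from Pavi\'c's work. Your argument is the standard proof of the weighted Jensen inequality: take a subgradient $k$ of $h$ at the weighted mean $m$, multiply $h(f(x))\ge h(m)+k\,(f(x)-m)$ by $\gamma(x)\ge 0$, integrate, and note that the linear term integrates to zero. You also handle the integrability correctly: the negative part of $h(f)\gamma$ is dominated by an integrable affine expression in $f\gamma$ and $\gamma$, so $\int_\Omega h(f)\gamma\,dx\in(-\infty,+\infty]$ and the case $+\infty$ is trivial.

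Your boundary remark can be simplified. Since $f\ge 0$, the case $m=0$ forces $f\gamma=0$ a.e., so $f=0$ a.e. on $\{\gamma>0\}$. Then both sides equal $h(0)$, and no finite slope at $0$ is needed. Moreover, the lemma assumes $h$ is convex on all of $\mathbb{R}$, so $0$ is an interior point anyway. That remark therefore concerns only the paper's application to the convexly continued $\Phi_q$, not the lemma itself.
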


\begin{lemma}[A mapping property in the scale of fractional spaces $\left\{H^s\right\}_{s \in \mathbb{R}}$, see \cite{Triebel1983}]\label{lem5}
Let $\gamma, s \in \mathbb{R}$. Then, the fractional Laplacian
$$
(-\Delta)^\gamma: f \rightarrow(-\Delta)^\gamma f=\left((-\Delta)^\gamma f\right)(x):=\mathfrak{F}^{-1}\left(|\xi|^{2 \gamma} \widehat{f}(\xi)\right)(x)
$$
maps isomorphically the space $H^s$ onto $H^{s-2 \gamma}$.
\end{lemma}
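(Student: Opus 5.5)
The plan is to work entirely on the Fourier side. By Plancherel, $\|f\|_{H^s}=\|\langle\xi\rangle^{s}\widehat f\|_{L^2}$ with $\langle\xi\rangle=(1+|\xi|^2)^{1/2}$. Since $(-\Delta)^\gamma$ is the Fourier multiplier with symbol $|\xi|^{2\gamma}$, one has
\[
\|(-\Delta)^\gamma f\|_{H^{s-2\gamma}}=\big\|\langle\xi\rangle^{s-2\gamma}|\xi|^{2\gamma}\widehat f\big\|_{L^2}
=\Big\|\Big(\tfrac{|\xi|}{\langle\xi\rangle}\Big)^{2\gamma}\langle\xi\rangle^{s}\widehat f\Big\|_{L^2}.
\]
So the claimed isomorphism is equivalent to the multiplier $m_\gamma(\xi):=(|\xi|/\langle\xi\rangle)^{2\gamma}$ being bounded above and below on the relevant frequency set. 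The candidate inverse is $(-\Delta)^{-\gamma}$, the multiplier $|\xi|^{-2\gamma}$. Formally $(-\Delta)^{-\gamma}(-\Delta)^\gamma=\mathrm{id}$ and $(-\Delta)^\gamma(-\Delta)^{-\gamma}=\mathrm{id}$ because $|\xi|^{2\gamma}|\xi|^{-2\gamma}=1$ almost everywhere, so injectivity and surjectivity reduce to showing that both multipliers are bounded between the indicated spaces.

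I will split frequencies with the cut-offs $\chi_{\rm L}(|\xi|)+\chi_{\rm H}(|\xi|)=1$ already used in the proof of Proposition \ref{prop2.1}.

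\textbf{High frequencies.} On $\operatorname{supp}\chi_{\rm H}$ we have $|\xi|\approx\langle\xi\rangle$. Hence $m_\gamma\approx 1$ there for every $\gamma\in\mathbb R$, which gives
\[
\|(-\Delta)^\gamma \mathfrak F^{-1}(\chi_{\rm H}\widehat f)\|_{H^{s-2\gamma}}\approx\|\mathfrak F^{-1}(\chi_{\rm H}\widehat f)\|_{H^s}.
\]
The same holds with $\gamma$ replaced by $-\gamma$. So $(-\Delta)^\gamma$ is an isomorphism from the high-frequency part of $H^s$ onto the high-frequency part of $H^{s-2\gamma}$, with inverse $(-\Delta)^{-\gamma}$. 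By the same reasoning, it is an isomorphism on every closed subspace of functions whose Fourier support avoids a fixed neighbourhood of the origin.

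\textbf{Low frequencies: the main obstacle.} On $\operatorname{supp}\chi_{\rm L}$ we have $\langle\xi\rangle\approx1$, so $m_\gamma\approx|\xi|^{2\gamma}$. For $\gamma>0$ this gives the bound from above, so $(-\Delta)^\gamma:H^s\to H^{s-2\gamma}$ is bounded. The bound from below fails as $\xi\to0$, and this is exactly the point where the two-sided isomorphism must be established. For $\gamma<0$ the roles are reversed. My first attempt will be to read $H^s$ in the sense of the cited source \cite{Triebel1983}, where the lifting property is stated with the Bessel symbol $\langle\xi\rangle^{2\gamma}$. In that reading $m\equiv1$ identically and the isomorphism is immediate. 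To transfer it to $|\xi|^{2\gamma}$, I will write
\[
(-\Delta)^\gamma=(1-\Delta)^\gamma\circ M_\gamma, \qquad M_\gamma=\mathfrak F^{-1}m_\gamma\mathfrak F,
\]
and then control $M_\gamma$ on $H^s$ frequency-wise: $M_\gamma$ and $M_{-\gamma}$ are bounded on the high-frequency part by the previous step. On the low-frequency part, $M_{\pm\gamma}$ will be an isomorphism only after restricting to the subspace where $|\xi|^{\mp2\gamma}\widehat f\in L^2(\{|\xi|\le\epsilon\})$, that is, on the homogeneous component.

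This low-frequency restriction is the step I expect to be delicate. As literally worded, with $H^s$ the inhomogeneous space, one side of the estimate near $\xi=0$ cannot hold for $\gamma\neq0$. For example, for $\gamma>0$ take $\widehat f=\chi_{\rm L}(|\xi|)|\xi|^{-2\gamma-n/2}\,|\log|\xi||^{-1}$ when $2\gamma<$ the relevant threshold. Then $(-\Delta)^\gamma f$ lies in $H^{s-2\gamma}$, while $f$ itself fails to lie in $H^s$, so surjectivity breaks. Therefore the plan concludes with the isomorphism on the high-frequency (equivalently, Fourier-support-away-from-zero) subspace, and with the full statement under the Bessel-potential reading of \cite{Triebel1983}. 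This is the form in which the lemma suffices for the integrations by parts in Step 1, since the test functions there only require boundedness of $(-\Delta)^\sigma$, $\sigma\ge0$, from $H^s$ into $H^{s-2\sigma}$.
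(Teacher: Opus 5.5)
The paper gives no proof of this lemma; it only cites \cite{Triebel1983}. Your Fourier-side computation is correct, and so is its main conclusion. With $H^s$ the inhomogeneous Bessel potential space, as fixed in the paper's notation, and with the symbol $|\xi|^{2\gamma}$, the statement is false for every $\gamma\neq0$. So no argument can close the low-frequency gap you isolate, and you were right not to try. The lifting property in \cite{Triebel1983} concerns the Bessel operator $(1-\Delta)^{\gamma}$, with symbol $\langle\xi\rangle^{2\gamma}$, on $H^s$. The version with $|\xi|^{2\gamma}$ holds on the homogeneous scale: $(-\Delta)^\gamma$ maps $\dot H^s$ isometrically onto $\dot H^{s-2\gamma}$, with the usual care in defining $\dot H^s$. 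In both correct versions the proof is your Plancherel identity with $m_\gamma$ replaced by $1$.

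Two points to tighten.

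First, your counterexample needs no threshold on $\gamma$ if you build it from the target. Let $\widehat g:=\chi_{\rm L}(|\xi|)\,|\xi|^{-n/2}|\log|\xi||^{-1}$ with $\epsilon<1$. Then $\widehat g\in L^2$ has compact support, so $g\in H^t$ for every $t$.
\begin{itemize}
\item For $\gamma>0$: any $f\in H^s$ with $(-\Delta)^\gamma f=g$ must satisfy $\widehat f=|\xi|^{-2\gamma}\widehat g$ almost everywhere. This is not square integrable near the origin, since $\int_0^{\epsilon/2}r^{-4\gamma-1}|\log r|^{-2}\,dr=\infty$. So surjectivity fails for every $\gamma>0$, and you never need $f$ itself to be a tempered distribution.
\item For $\gamma<0$: the same $g$, viewed as an element of $H^s$, shows the operator is not even bounded from $H^s$ into $H^{s-2\gamma}$, because $|\xi|^{2\gamma}\widehat g\notin L^2_{\rm loc}$.
\end{itemize}

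Second, for $\gamma\geq0$ the precise surviving statement is this. The map $(-\Delta)^\gamma:H^s\to H^{s-2\gamma}$ is bounded and injective. Its range is $\{g\in H^{s-2\gamma}:\ |\xi|^{-2\gamma}\widehat g\in L^2(\{|\xi|\le1\})\}$, which is dense but not closed. As you observe, Step 1 of the proof of Theorem \ref{thr3.1} uses only this boundedness for $\gamma=\sigma\in(0,1/2]$, together with Lemmas \ref{lem6} and \ref{lem7}. So the paper's argument is unaffected once the lemma is restated in this form.
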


\begin{lemma}[see \cite{Egorov1997}] \label{lem6}
Let $f=f(x) \in H^s$ and $g=g(x) \in H^{-s}$ with $s \in \mathbb{R}$. Then, the following estimate holds:
$$
\left|\int_{\mathbb{R}^n} f(x) g(x) d x\right| \leq\|f\|_{H^s}\|g\|_{H^{-s}} .
$$
\end{lemma}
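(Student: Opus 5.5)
The statement to prove is the last lemma in the appendix (Lemma \ref{lem6}): for $f\in H^s$ and $g\in H^{-s}$,
\[
\Big|\int_{\mathbb{R}^n} f(x)g(x)\,dx\Big|\le \|f\|_{H^s}\|g\|_{H^{-s}}.
\]
The plan is to pass to the Fourier side and apply a weighted Cauchy--Schwarz inequality. We normalize the norm as $\|f\|_{H^s}=\|\langle\xi\rangle^{s}\widehat f\|_{L^2}$ with $\langle\xi\rangle=(1+|\xi|^2)^{1/2}$, and use the unitary Fourier transform, so that Plancherel holds without constants.

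First take $f,g\in\mathcal{S}(\mathbb{R}^n)$. Parseval gives
\[
\int_{\mathbb{R}^n} f(x)g(x)\,dx=\int_{\mathbb{R}^n}\widehat f(\xi)\,\overline{\widehat{\bar g}(\xi)}\,d\xi .
\]
Since $\widehat{\bar g}(\xi)=\overline{\widehat g(-\xi)}$, after the change of variables $\xi\mapsto-\xi$ in one factor this becomes $\int \widehat f(\xi)\widehat g(-\xi)\,d\xi$. Insert the weights as $\langle\xi\rangle^{s}\langle\xi\rangle^{-s}$ and apply Cauchy--Schwarz:
\[
\Big|\int f g\,dx\Big|\le \|\langle\xi\rangle^{s}\widehat f\|_{L^2}\,\|\langle\xi\rangle^{-s}\widehat g(-\cdot)\|_{L^2}.
\]
The weight $\langle\xi\rangle$ is even, so the last factor equals $\|g\|_{H^{-s}}$, and the inequality holds for Schwartz functions.

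For general $f\in H^s$ and $g\in H^{-s}$, we interpret $\int fg\,dx$ as the duality pairing. We extend it from $\mathcal{S}\times\mathcal{S}$ by density of $\mathcal{S}$ in $H^{\pm s}$. The bilinear form is bounded by the inequality just proved, so it extends uniquely and continuously, and the estimate persists in the limit. When $fg\in L^1$ (for instance, when both lie in $L^2$), the pairing agrees with the Lebesgue integral, by $L^2$-continuity along the approximating sequences.

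The only delicate point is precisely this interpretation of $\int fg$ for genuinely distributional $g$. It is handled by defining the integral as the continuous extension of the pairing, which is how the paper uses the lemma: to justify testing the equation against $\phi_R$ together with Lemma \ref{lem5}.
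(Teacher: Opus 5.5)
Your proof is correct and is the standard argument for this fact. The paper does not prove it; it simply quotes it from Egorov. Your steps are the usual ones: Plancherel gives $\int fg\,dx=\int\widehat f(\xi)\,\widehat g(-\xi)\,d\xi$, the even weight $\langle\xi\rangle^{\pm s}$ together with Cauchy--Schwarz gives the bound, and density of $\mathcal{S}$ in $H^{\pm s}$ defines the pairing in general.

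I would drop the parenthetical claim that the extended pairing equals the Lebesgue integral whenever $fg\in L^1$. Your approximation argument only justifies this when, say, $f,g\in L^2$ or one factor is a Schwartz function. The general case is a delicate Brezis--Browder-type question that the lemma does not need.
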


\begin{lemma}[see \cite{Anhblowup}] \label{lem7}
Let $s \in \mathbb{R}$. Let $v_1=v_1(x) \in H^s$ and $v_2=v_2(x) \in H^{-s}$. Then, the following relation holds:
$$
\int_{\mathbb{R}^n} v_1(x) v_2(x) d x=\int_{\mathbb{R}^n} \widehat{v}_1(\xi) \widehat{v}_2(\xi) d \xi .
$$
\end{lemma}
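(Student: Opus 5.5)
We prove the identity first for Schwartz functions and then extend it to $H^s\times H^{-s}$ by density, using that both sides are continuous bilinear forms for the norm $\|\cdot\|_{H^s}\|\cdot\|_{H^{-s}}$.

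\textbf{Convention.} Everything here is real-valued, as in the paper. For real $v_2$ we have $\widehat{v}_2(\xi)=\overline{\widehat{v}_2(-\xi)}$. So the right-hand side is read as $\int_{\mathbb{R}^n}\widehat{v}_1(\xi)\overline{\widehat{v}_2(\xi)}\,d\xi$ (equivalently $\int \widehat{v}_1(\xi)\widehat{v}_2(-\xi)\,d\xi$), with the Fourier transform normalized to be unitary on $L^2$. If $v_2$ is even, this agrees literally with the displayed formula.

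\textbf{Step 1: Schwartz functions.} For $v_1,v_2\in\mathcal{S}(\mathbb{R}^n)$, insert $v_2(x)=\int \widehat{v}_2(\xi)e^{ix\cdot\xi}\,d\xi$ (with the normalizing constant) into $\int v_1 v_2\,dx$. Fubini is justified since $v_1\in L^1$ and $\widehat{v}_2\in L^1$. Recognizing the inner integral as $\widehat{v}_1(-\xi)$ gives $\int v_1v_2\,dx=\int\widehat{v}_1(-\xi)\widehat{v}_2(\xi)\,d\xi$. This is the Parseval relation in the form fixed above.

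\textbf{Step 2: Continuity bound.} With $\langle\xi\rangle=(1+|\xi|^2)^{1/2}$, write the right-hand side as the integral of $\langle\xi\rangle^{s}\widehat{v}_1\cdot\langle\xi\rangle^{-s}\overline{\widehat{v}_2}$ and apply Cauchy--Schwarz:
\[
\Big|\int_{\mathbb{R}^n}\widehat{v}_1(\xi)\overline{\widehat{v}_2(\xi)}\,d\xi\Big|\le \|\langle\xi\rangle^{s}\widehat{v}_1\|_{L^2}\,\|\langle\xi\rangle^{-s}\widehat{v}_2\|_{L^2}=\|v_1\|_{H^s}\|v_2\|_{H^{-s}}.
\]
So the right-hand side is well defined and continuous on $H^s\times H^{-s}$. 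On the left, Lemma \ref{lem6} gives the same bound for Schwartz pairs.

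\textbf{Step 3: Density.} $\mathcal{S}$ is dense in every $H^s$, since cut-off in frequency and mollification converge in the weighted $L^2$ norm. Take $v_1^k\to v_1$ in $H^s$ and $v_2^k\to v_2$ in $H^{-s}$ with Schwartz approximants, and pass to the limit on both sides of the Step 1 identity. The right-hand side converges by Step 2.

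\textbf{Main obstacle.} The left-hand side for general $v_1\in H^s$, $v_2\in H^{-s}$. When $s\ne0$ the product $v_1v_2$ need not be integrable, and $v_2$ may only be a distribution. The way around is to \emph{define} $\int v_1v_2\,dx$ as the unique continuous extension of the $L^2$ pairing from $\mathcal{S}\times\mathcal{S}$, which is exactly the duality pairing behind Lemma \ref{lem6}. One then checks that this extension coincides with the ordinary Lebesgue integral whenever $v_1v_2\in L^1$, e.g. when $v_2\in L^1\cap L^2$, as in the applications in Step 1 of the proof of Theorem \ref{thr3.1}. With this interpretation the limit in Step 3 is the left-hand side, and the identity holds on all of $H^s\times H^{-s}$.
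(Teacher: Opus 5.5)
The paper does not prove this lemma; it quotes it from \cite{Anhblowup}, so there is no in-paper argument to compare with. Your argument is correct under your reading of the statement. You prove the identity on $\mathcal{S}\times\mathcal{S}$ by Fourier inversion and Fubini, bound it by $\|v_1\|_{H^s}\|v_2\|_{H^{-s}}$ via weighted Cauchy--Schwarz, use density of $\mathcal{S}$ in $H^{\pm s}$, and read the left-hand side as the continuous extension of the $L^2$ pairing. This is the standard proof.

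Your reinterpretation of the right-hand side is necessary, not a matter of convention. Taken literally, with no conjugation or reflection, the displayed identity is false. For a real odd $f\in L^2\setminus\{0\}$, $\widehat f$ is purely imaginary, so with $v_1=v_2=f$ and $s=0$ the right-hand side is $-\|f\|_{L^2}^2$ while the left-hand side is $\|f\|_{L^2}^2$. The identity holds verbatim when one of the two factors is even. That is exactly how the paper uses it, since it is always applied against the radial functions $\phi_R(\cdot,t)$ or $(-\Delta)^\sigma\phi_R(\cdot,t)$.

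Two corrections.

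\textbf{Step 2.} Do not invoke Lemma~\ref{lem6} for the left-hand side. That inequality is normally derived from the very Parseval identity you are proving, so the appeal is circular. It is also unnecessary: Step 1 already identifies the two sides on $\mathcal{S}\times\mathcal{S}$, so your Cauchy--Schwarz bound controls both.

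\textbf{Consistency with the Lebesgue integral.} Your claim that the extended pairing agrees with the Lebesgue integral \emph{whenever} $v_1v_2\in L^1$ is too broad to be ``checked'' in passing; in that generality it is a Brezis--Browder type question. Your example $v_2\in L^1\cap L^2$ only makes sense for $s\ge 0$, since for $s<0$ the factor $v_1$ is merely a distribution. The paper only needs consistency when both factors lie in $L^2$, for instance $u$ paired with $(-\Delta)^\sigma\partial_t\phi_R$. In that case agreement is immediate. Approximate the factor of nonnegative order in its Sobolev norm and the other factor in $L^2$, both by Schwartz functions. Then use continuity of both the ordinary $L^2$ pairing and the extended one.
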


\end{document}